\newtheorem{theorem}{Theorem}
\newtheorem{lemma}[theorem]{Lemma}
\newtheorem{corollary}[theorem]{Corollary}
\newtheorem{proposition}[theorem]{Proposition}
\newtheorem{example}[theorem]{Example}
\font\sc=rsfs10
\newcommand{\cP}{\sc\mbox{P}\hspace{1.0pt}}
\newcommand{\Hom}{\operatorname{Hom}}
\def\hk#1{\textcolor[rgb]{0, 0, 0}{#1}}
\newcommand{\inv}{^{-1}}
\newcommand{\Ext}{\operatorname{Ext}}
\newcommand{\ext}{\operatorname{ext}}
\newcommand{\T}{\mathcal{T}_\bullet}
\newcommand{\exy}{{\mathrm{E}}_{y,x}}
\newcommand{\e}{\mathrm{E}}
\newcommand{\tE}{\widetilde{\mathrm{E}}}
\begin{document}
\title[Graded extensions of Verma modules]
{Graded extensions of Verma modules}

\author[H.~Ko and V.~Mazorchuk]
{Hankyung Ko and Volodymyr Mazorchuk}

\begin{abstract}
In this paper, we investigate extensions between graded Verma modules 
in the BGG category $\mathcal{O}$. In particular, we determine exactly 
which information about extensions between graded Verma modules is
given by the coefficients of the $R$-polynomials. We also give some
upper bounds for the dimensions of graded extensions between Verma 
modules in terms of Kazhdan-Lusztig combinatorics. We completely
determine all extensions between Verma module in the regular
block of category $\mathcal{O}$ for $\mathfrak{sl}_4$ and construct
various ``unexpected'' higher extensions between Verma modules. %explicit families of ``unexpected'' non-trivial first and second extensions between Verma modules in type $A$. 
\end{abstract}

\maketitle

\section{Introduction and description of the results}\label{s1}

To determine extensions between Verma modules in 
Bernstein-Gelfand-Gelfand category $\mathcal{O}$ associated to a 
triangular decomposition of a semi-simple finite dimensional complex 
Lie algebra $\mathfrak{g}$ is a famous open problem. It was studied  in, for example, 
\cite{De,GJ,Ca1,Bo,Ma,Ab,Ca2}. The paper \cite{GJ} suggested a connection
between this problem and a combinatorial gadget, associated to the Weyl group 
of $\mathfrak{g}$, called the $R$-polynomials. Although not explicitly stated
in \cite{GJ}, the expectation that the dimensions of the extension groups
between Verma modules should be given by the coefficients of $R$-polynomials
became known as the {\em Gabber-Joseph conjecture}. Unfortunately, in \cite{Bo}
it was shown that this expectation is, in general, wrong. At the present stage, 
there is not even a conjectural answer to this problem. Some explicit results 
describing the extension groups between Verma modules in special cases can 
be found in \cite{Ca1,Ma,Ab,Ca2}. The main result of \cite{De} determines the
Euler characteristic (i.e., the alternating sum) for dimensions of such extensions.

In two recent papers \cite{KMM1,KMM2}, written jointly with Rafael Mr{\dj}en, we 
studied the dimensions of the first extension from a simple module to a Verma module.
In the case of the special linear Lie algebra, we gave an explicit formula for
this dimension, see \cite{KMM1}. Outside type $A$ the situation is more complicated.
However, an explicit formula can be given in many special cases, see \cite{KMM2}.
These results motivated us to take a new closer look at the classical problem
of extensions between Verma modules.

Our first main result, Theorem~\ref{thm3.2-1}, explicitly determines
the role that $R$-polynomials play in the theory of extensions between Verma modules.
Category $\mathcal{O}$ admits a natural $\mathbb{Z}$-graded lift. Delorme's formula
mentioned above turns out to have a natural graded analogue which asserts that 
the coefficients of $R$-polynomials determine the Euler characteristic for 
dimensions of certain graded extensions between Verma modules. This result
can be found in Section~\ref{s3}, see also Subsection~\ref{ss.r'poly}.

In Section~\ref{s4} we prove a number of general results about extension between
graded Verma modules. Combining this additional grading with the homological grading
gives a two-dimensional coordinate system (in $\mathbb{Z}^2$) in which the potential
region for non-zero graded extensions between two fixed Verma modules has the form
of a triangle% (with one side included in this potential region and the two other sides excluded)
, see Figure~\ref{fig4.2-1.5}. If the distance between 
the indexes of Verma modules is small, then this triangle degenerates to its 
included side and all extensions are indeed described by the coefficients of 
the $R$-polynomials, see Corollary~\ref{cor4.4-3}. In fact, even in the general case,
there are specific situations where graded extensions between Verma modules are
given by the coefficients of  the $R$-polynomials, see Corollary~\ref{cor4.4-2}.
In Subsection~\ref{sskl} we give several general bounds for the dimension of a fixed 
extension between graded Verma modules in terms of the Kazhdan-Lusztig combinatorics.

Section~\ref{s5} discusses a few observations about extensions between Verma modules
related to the combinatorics of Bruhat order on the Weyl group. In particular, in
Proposition~\ref{prop5.2-1} we show that graded extension between Verma modules
are given by the coefficients of $R$-polynomials provided that the indexes of
the involved Verma modules are both boolean or are both coboolean.

In Section~\ref{s6} we present a number of concrete results in special cases.
In particular, in Theorem~\ref{thm6.3-1} we show that in the Weyl type $A_3$
(i.e., for the Lie algebra $\mathfrak{sl}_4$) all extensions between 
Verma modules are given by the coefficients of  the $R$-polynomials 
%This results is quite unexpected, from our point of view, as 
despite the nontrivial Kazhdan-Lusztig combinatorics.

Recall the triangle region of potential extensions between Verma modules 
mentioned above. It has one side included in the region and two sides that
are not included. We call the extensions corresponding to the included side
``expected'' and the extensions corresponding to the interior of the triangle
``additional''. If all extensions between two Verma modules are expected, then 
they are given by the coefficients of  the $R$-polynomials. In 
Subsection~\ref{s6.4}, we use the results from \cite{KMM1} to construct 
families of explicit non-zero additional first extensions between Verma modules
in type $A$. In Subsection~\ref{s6.5}, we use the results from Subsection~\ref{s6.4}
to construct families of explicit non-zero additional second 
extensions between Verma modules in type $A$.

The last section of the paper, Section~\ref{s7}, discusses the parabolic and singular cases
and the corresponding results similar to the ones that we obtain in the regular
case. In particular, Theorem~\ref{thm7.8} gives a necessary
condition for the graded extension algebra of Verma modules to be Koszul.
\vspace{5mm}

\subsection*{Acknowledgements} For the second author, the research is 
partially supported by the Swedish Research Council. Examples in Subsection~\ref{s6.5} were computed by SageMath.

\section{Preliminaries on category $\mathcal{O}$}\label{s2}

\subsection{Category $\mathcal{O}$}\label{s2.1}

Let $\mathfrak{g}$ be a semi-simple finite dimensional complex Lie algebra
with a fixed triangular decomposition 
$\mathfrak{g}=\mathfrak{n}_-\oplus \mathfrak{h}\oplus \mathfrak{n}_+$
(see \cite{Hu,MP} for details). Associated to this datum, we have
the Bernstein-Gelfand-Gelfand category $\mathcal{O}$, cf. \cite{BGG,Hu}.

Simple modules in $\mathcal{O}$ are exactly the simple highest weight modules
$L(\lambda)$, where $\lambda\in \mathfrak{h}^*$. For each such $\lambda$,
we also have the corresponding
\begin{itemize}
\item Verma module $\Delta(\lambda)$, %(a.k.a. a standard module),
\item dual Verma module $\nabla(\lambda)$,% (a.k.a. a costandard module),
\item indecomposable projective module $P(\lambda)$,
\item indecomposable injective module $I(\lambda)$,
\item indecomposable tilting module $T(\lambda)$.
\end{itemize}
The category $\mathcal O$ is a highest weight category with respect to the dominant order on $\mathfrak{h}^*$, where $\Delta(\lambda)$ are the standard modules and $\nabla(\lambda)$ are the costandard modules.

Consider the principal block $\mathcal{O}_0$ of $\mathcal{O}$, which is 
defined as the indecomposable direct summand contaning the trivial
$\mathfrak{g}$-module $L(0)$. Simple modules in $\mathcal{O}_0$ are
indexed by the elements of the Weyl group $W$ of $\mathfrak{g}$. 
For $w\in W$, we have the corresponding simple module $L_w:=L(w\cdot 0)$,
where $w\cdot 0$ denotes the usual dot-action of the Weyl group.
We similarly denote by $\Delta_w$, $\nabla_w$, $P_w$, $I_w$ and $T_w$
the other structural modules corresponding to the weight $w\in W$.
Then $\mathcal O_0$ is highest weight with respect to the opposite of the Bruhat order on $W$.

We use $\mathrm{Ext}$ and $\mathrm{Hom}$ to denote extensions and
homomorphisms in $\mathcal{O}$, respectively. The simple preserving duality
on $\mathcal{O}$ is denoted by $\star$.

\subsection{Graded category $\mathcal{O}$}\label{s2.2}

The category $\mathcal{O}_0$ admits a $\mathbb{Z}$-graded lift 
$\mathcal{O}_0^{\mathbb{Z}}$, see \cite{So}. All structural modules
in $\mathcal{O}_0$ admit graded lifts (unique up to isomorphism and 
shift of grading). We use the following notation for the standard graded lifts
of indecomposable structural modules:
\begin{itemize}
\item by ${L}_w$ the graded lift concentrated in degree $0$,
\item by ${\Delta}_w$ the graded lift with the top
in degree $0$,
\item by ${\nabla}_w$ the graded lift with the socle
in degree $0$,
\item by ${P}_w$ the graded lift  with the top
in degree $0$,
\item by ${I}_w$ the graded lift  with the socle
in degree $0$,
\item by ${T}_w$ the graded lift  having the unique
$L_w$ subquotient in degree $0$.
\end{itemize}

We denote by $\langle k\rangle$ the functor which shifts the grading,
with the convention that $\langle 1\rangle$ maps degree $0$ to degree $-1$.
We use $\mathrm{ext}$ and $\mathrm{hom}$ to denote extensions and
homomorphisms in $\mathcal{O}_0^{\mathbb{Z}}$, respectively. 
The graded version of $\star$ is also denoted by $\star$.

\hk{
\subsection{Bruhat order and the zeroth extensions}\label{ss.Vermainclusion}
We recall in this subsection the graded homomorphisms between Verma modules, which is well-knwon (see, for example, \cite[Chapter~7]{Di}). Let $\leq$ be the Bruhat order on $W$. Then we have
\begin{equation*}
\dim\hom(\Delta_x\langle a\rangle,\Delta_y\langle b\rangle ) = \begin{cases}
1 &\text{if $x\geq y$ and $a-b = \ell(x)-\ell(y)$}\\
0 &\text{otherwise}.
\end{cases}    
\end{equation*}
Moreover, any nonzero homomorphism between Verma modules is injective. 
Also, the nonzero homomorphisms $L_{w_0}=\Delta_{w_0}\to \Delta_y\langle \ell(y) - \ell(w_0) \rangle$, and their shifts, gives the socle of the Verma modules.}

\subsection{Combinatorics of category $\mathcal{O}_0^{\mathbb{Z}}$}\label{s2.3}

Let $\mathbf{H}$ denote the Hecke algebra of $W$ over $%\mathbb{A}:=
\mathbb{Z}[v,v^{-1}]$
in the normalization of \cite{So2}. It has the standard basis
$\{H_w\,:\,w\in W\}$ and the Kazhdan-Lusztig (KL) basis $\{\underline{H}_w\,:\,w\in W\}$.
The KL polynomials $\{p_{x,y}\,:\,x,y\in W\}$ are the entires of 
the transformation matrix between these two bases, that is 
\begin{displaymath}
\underline{H}_y=\sum_{x\in W}p_{x,y}H_x, \text{ for all } y\in W.
\end{displaymath}
\hk{By construction, we have $p_{x,y}\in \mathbb Z[v]$ and $p_{x,y}=0$ for $x\not\leq y$. When $p_{x,y}\neq 0$, we have $\deg p_{x,y}=\ell(y)-\ell(x)$.}
For $x,y\in W$ and $k\in\mathbb{Z}$, we denote by $p_{x,y}^{(k)}$ the coefficient
at $v^k$ in $p_{x,y}$.

Taking the Grothendieck group gives rise to an isomorphism of 
$\mathbb{Z}[v,v^{-1}]$-modules as follows:
\begin{equation}\label{eq2.3-1}
\mathrm{Gr}(\mathcal{O}_0^{\mathbb{Z}}) \cong \mathbf{H},\qquad
[{\Delta}_w]\mapsto H_w, \text{ for } w\in W.
\end{equation}
Here the $\mathbb{Z}[v,v^{-1}]$-module structure on 
$\mathrm{Gr}(\mathcal{O}_0^{\mathbb{Z}})$ is given be letting
the element $v$ act as $\langle -1\rangle$.
\hk{By the Kazhdan-Lusztig theorem, see~\cite{KL,So}, this isomorphism maps ${P}_w$ to $\underline{H}_w$, for $w\in W$.
In particular, we have $p_{x,y}\in \mathbb Z_{\geq 0}[v]$, where its coefficients are the multiplicities of the (graded) filtration of $P_y$ by $\Delta_x$ (see Subsections~\ref{s2.1},~\ref{s2.2}).}

\subsection{Endofunctors of $\mathcal{O}_0$}\label{s2.5}

The category $\mathcal{O}_0$ is equipped with the action of various families
of functors, see \cite{BG,Ca1,AS,KM,MS,Hu} %MS2,Jo
 and references therein. 
For $w\in W$, we denote by
\begin{itemize}
\item $\theta_w$ the indecomposable projective endofunctor of $\mathcal{O}_0$
sending $P_e$ to $P_w$, see \cite{BG};
\item $\mathrm{C}_w$ the corresponding shuffling functor, see \cite{Ca1,MS};
%\item $\mathrm{K}_{w^{-1}}$ the right adjoint of $\mathrm{C}_w$, see \cite{MS2};
\item $\top_w$ the corresponding twisting functor, see \cite{AS,KM};
%\item $\mathrm{J}_{w^{-1}}$ the right adjoint of $\mathrm{T}_w$
%(a.k.a. Joseph's completion functor), see \cite{Jo,AS,KM}.
\end{itemize}

The action of the monoidal category $\cP^\mathbb{Z}$ of graded projective
functors on $\mathcal{O}_0^{\mathbb{Z}}$ categorifies the right regular
$\mathbf{H}$-module.

\subsection{$R$-polynomials}\label{s2.4}

The isomorphism in \eqref{eq2.3-1} equips the algebra $\mathbf{H}$ with the following
$\mathbb{Z}[v,v^{-1}]$-bases:
\begin{itemize}
\item $\{H_w=[\Delta_w]\,:\,w\in W\}$, the standard basis;
\item $\{\underline{H}_w=[P_w]\,:\,w\in W\}$, the KL basis;
\item  $\{[\nabla_w]\,:\,w\in W\}$, the costandard basis;
\item  $\{[L_w]\,:\,w\in W\}$, the dual KL basis;
\item  $\{[I_w]\,:\,w\in W\}$;
\item  $\{[T_w]\,:\,w\in W\}$.
\end{itemize}
The {\em $R$-polynomials} $\{r_{x,y}\,:\,x,y\in W\}$ are defined as the entries of
the transformation matrix between the standard and the costandard bases, i.e.:
\begin{displaymath}
[\Delta_y]=\sum_{x\in W}r_{x,y}[\nabla_x], \text{ for all } y\in W.
\end{displaymath}
Note that $r_{x,y}\in\mathbb{Z}[v,v^{-1}]$, by definition. %, in particular, they are Laurent polynomials with integral coefficients.
For $x,y\in W$ and $k\in\mathbb{Z}$, we denote by $r_{x,y}^{(k)}$ the coefficient
at $v^k$ in $r_{x,y}$.

As $\Delta_{w_0}=\nabla_{w_0}$, we have
\begin{equation}\label{eq2.4-1}
r_{x,w_0}=
\begin{cases}
1,& x=w_0;\\
0,& \text{otherwise}.
\end{cases}
\end{equation}

For $w\in W$ and $s\in S$ such that $ws>w$, we have
\begin{displaymath}
[\theta_s\Delta_w]=v[\theta_s\Delta_{ws}]=[\Delta_{ws}]+v[\Delta_w]\,\text{ and }\,
[\theta_s\nabla_w]=v^{-1}[\theta_s\nabla_{ws}]=[\nabla_{ws}]+v^{-1}[\nabla_w].
\end{displaymath}
From this, we have the following recursive formula for $R$-polynomials:
For $x,y\in W$ and $s\in S$ such that $ys<y$, we have:
\begin{equation}\label{eq2.4-2}
r_{x,ys}=
\begin{cases}
r_{xs,y},& xs<x;\\ 
r_{xs,y}+ (v^{-1}-v)r_{x,y},& xs>x.
\end{cases}
\end{equation}
Together, Formulae~\eqref{eq2.4-1} and \eqref{eq2.4-2} determine the
family of $R$-polynomials uniquely.

Please note that our indexing of $R$-polynomials differs from
the usual one in \cite{KL,BB} by a $w_0$-shift.
For more information on $R$-polynomials, we refer to \cite{KL}
and \cite[Section~5.3]{BB}.

\subsection{Koszul and Koszul-Ringel dualities}\label{s2.6}

Consider $\mathcal{D}^b(\mathcal{O}_0^{\mathbb{Z}})$ the bounded derived category of $\mathcal{O}_0^{\mathbb{Z}}$. The category $\mathcal{D}^b(\mathcal{O}_0^{\mathbb{Z}})$ has a famous
auto-equivalence called the {\em Koszul duality}
and denoted by $\mathrm{K}$, see \cite{So,BGS,MOS}. 
It has the following properties:
\begin{itemize}
\item $\mathrm{K}$ sends $P_w$ to $L_{w^{-1}w_0}$;
\item $\mathrm{K}$ sends $\Delta_w$ to $\nabla_{w^{-1}w_0}$;
\item $\mathrm{K}$ sends $L_w$ to $I_{w^{-1}w_0}$;
\item $\mathrm{K}\langle j\rangle\cong [j]\langle -j\rangle\mathrm{K}$.
\end{itemize}

Another famous auto-equivalence of $\mathcal{D}^b(\mathcal{O}_0^{\mathbb{Z}})$
is its Ringel self-duality given by the derived twisting functor
$\mathcal{L}\top_{w_0}$, see \cite{So1-2}. It has the following properties:
\begin{itemize}
\item $\mathcal{L}\top_{w_0}$ sends $P_w$ to $T_{w_0w}$;
\item $\mathcal{L}\top_{w_0}$ sends $T_{w}$ to $I_{w_0w}$;
\item $\mathcal{L}\top_{w_0}$ sends $\Delta_{w}$ to $\nabla_{w_0w}$.
\end{itemize}

%\item $\mathcal{L}\top_{w_0}\langle j \rangle = \langle j\rangle\mathcal{L}\top_{w_0}$.
\hk{
The composition $\mathrm{RK}:=(\mathcal L\top_{w_0})^{-1}\circ\mathrm{K}$ of the Koszul and Ringel self-dualities results in
the {\em Koszul-Ringel self-duality}. We have $\mathrm{RK}(\Delta_w)=\Delta_{w_0w^{-1}w_0}$, and thus $\mathrm{RK}^2(\Delta_w)=\Delta_w$. Since $\{\Delta_w\}_{w\in W}$ generates $\mathcal{D}^b(\mathcal{O}_0^{\mathbb{Z}})$, it follows that the Koszul-Ringel self-duality is an involution.
It has the following properties:
\begin{itemize}
\item $\mathrm{RK}$ sends $T_w$ to $L_{w_0w^{-1}w_0}$;
\item $\mathrm{RK}$ sends $L_w$ to $T_{w_0w^{-1}w_0}$;
\item $\mathrm{RK}$ sends $\Delta_w$ to $\Delta_{w_0w^{-1}w_0}$;
\item $\mathrm{RK}$ sends $\nabla_w$ to $\nabla_{w_0w^{-1}w_0}$.
\end{itemize}}

We refer to \cite{Ma2} for further details. 

A concrete realization of this self-duality is via the category $\mathscr{LC}(T)$ 
of linear complexes of tilting objects in $\mathcal{O}_0^{\mathbb{Z}}$. 
Recall that a complex $\mathcal{T}_\bullet$ of tilting objects in called
{\em linear} provided that each summand of each $\mathcal{T}_i$ has the form
$T_w\langle i\rangle$, for some $w\in W$. The essence of the Koszul-Ringel self-duality
is that $\mathrm{RK}$ restricts to an equivalence between the categories $\mathscr{LC}(T)$ and $\mathcal{O}_0^{\mathbb{Z}}$ where
\begin{itemize}
\item the tilting module $T_w$ (considered as a complex) is sent to 
the simple module $L_{w_0w^{-1}w_0}$;
\item the (linear!) complex of tilting modules representing $L_w$ is sent to the tilting module $T_{w_0w^{-1}w_0}$;
\item the linear tilting coresolution of  $\Delta_w$ is sent to 
the module $\Delta_{w_0w^{-1}w_0}$;
\item the linear tilting resolution of  $\nabla_w$ is sent to the module 
$\nabla_{w_0w^{-1}w_0}$.
\end{itemize}
We use the notation $\mathcal{T}_\bullet(L_w)$, $\mathcal{T}_\bullet(\Delta_w)$
and $\mathcal{T}_\bullet(\nabla_w)$ for the linear complexes of tilting modules that 
represent $L_w$, $\Delta_w$ and $\nabla_w$, for $w\in W$, respectively.

\section{Delorme formulae}\label{s3}

\subsection{Ungraded Delorme formula}\label{s3.1}

The following results is proved in \cite{De}.

\begin{proposition}\label{prop3.1-1}
For $x,y\in W$, we have
\begin{displaymath}
\sum_{i\geq 0}(-1)^i\dim\mathrm{Ext}^i_{\mathcal{O}}(\Delta_x,\Delta_y)=\delta_{x,y}. 
\end{displaymath}
\end{proposition}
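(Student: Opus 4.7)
The plan is to use the standard Euler characteristic argument for highest weight categories, whose two ingredients are the $\mathrm{Ext}$-orthogonality between standard and costandard objects, and the equality of their classes in the Grothendieck group.

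First I would record the highest weight orthogonality: since $\mathcal{O}_0$ is a highest weight category with standard modules $\Delta_x$ and costandard modules $\nabla_z$ (Subsection~\ref{s2.1}), one has
\begin{equation*}
\dim\mathrm{Ext}^i_{\mathcal{O}}(\Delta_x,\nabla_z)=\delta_{i,0}\,\delta_{x,z}
\end{equation*}
for all $x,z\in W$ and $i\geq 0$. Since $\mathcal{O}_0$ has finite global dimension and finite-dimensional $\mathrm{Hom}$-spaces, the long exact sequence of $\mathrm{Ext}$ shows that the assignment
\begin{equation*}
\chi_x\colon K_0(\mathcal{O}_0)\longrightarrow \mathbb{Z},\qquad [M]\mapsto \sum_{i\geq 0}(-1)^i\dim\mathrm{Ext}^i_{\mathcal{O}}(\Delta_x,M),
\end{equation*}
is a well-defined $\mathbb{Z}$-linear map. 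In view of the orthogonality above, this map satisfies $\chi_x([\nabla_z])=\delta_{x,z}$.

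It therefore suffices to verify the identity $[\Delta_y]=[\nabla_y]$ in $K_0(\mathcal{O}_0)$. This is immediate from the fact that $\nabla_y=\Delta_y^\star$ and that the simple preserving duality $\star$ preserves composition factor multiplicities; equivalently, $\Delta_y$ and $\nabla_y$ have the same formal character. Applying $\chi_x$ to this identity yields $\chi_x([\Delta_y])=\chi_x([\nabla_y])=\delta_{x,y}$, which is the claim.

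No serious obstacle is expected: the argument reduces the statement to two basic facts about highest weight categories and the duality $\star$. The only point requiring care is the finiteness needed to make $\chi_x$ well-defined, which follows from the existence of a finite projective resolution of each $\Delta_x$ in $\mathcal{O}_0$.
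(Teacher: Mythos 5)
Your proof is correct and follows essentially the same approach as the paper: both rely on the $\mathrm{Ext}$-orthogonality between standard and costandard modules, the additivity of the Euler characteristic, and the identity $[\Delta_y]=[\nabla_y]$ in the Grothendieck group coming from the simple preserving duality $\star$. The paper packages this as a universal expansion of $[M]$ in the standard basis (Formula~\eqref{eq3.1-2}) and then specializes to $M=\Delta_y$, whereas you evaluate the Euler form directly, but the underlying argument is identical.
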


\begin{proof}
First, we claim that, for any $M\in\mathcal{D}^b(\mathcal{O}_0)$, we have the following
relation in $\mathrm{Gr}(\mathcal{D}^b(\mathcal{O}_0))$:
\begin{equation}\label{eq3.1-2}
[M]= \sum_{x\in W}\sum_{i\geq 0}(-1)^i\dim\mathrm{Ext}^i_{\mathcal{O}}(\Delta_x,M)[\Delta_x].
\end{equation}
Indeed, for $M=\nabla_y$, this formula follows by combining the fact that standard
and costandard modules in $\mathcal{O}_0$ are homologically orthogonal with the fact that 
$[\Delta_x]=[\Delta_x^\star]=[\nabla_x]$, since $\star$ is simple preserving.
For the general $M$, Formula~\eqref{eq3.1-2} now follows using the additivity of both sides
with respect to distinguished triangles and the fact that costandard modules generate
$\mathcal{D}^b(\mathcal{O}_0)$.
 
The claim of the proposition is obtaned from Formula~\eqref{eq3.1-2} by plugging in
$M=\Delta_y$ and using that $\{[\Delta_x]\,:\,x\in W\}$ is a basis in
$\mathrm{Gr}(\mathcal{D}^b(\mathcal{O}_0))$.
\end{proof}

\subsection{Graded Delorme formula}\label{s3.2}

The following is a natural graded lift of Proposition~\ref{prop3.1-1}.
This statement explicitly explains the role which $R$-polynomials play
in the theory of extensions between Verma modules.

\begin{theorem}\label{thm3.2-1}
For $x,y\in W$ and $k\in\mathbb{Z}$, we have
\begin{displaymath}
\sum_{i\geq 0}(-1)^i\dim\mathrm{ext}^i(\Delta_x\langle k\rangle,\Delta_y)=r_{x,y}^{(k)}. 
\end{displaymath}
\end{theorem}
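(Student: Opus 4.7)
The plan is to follow the strategy of Proposition~\ref{prop3.1-1}, but to expand $[M]$ in the costandard basis $\{[\nabla_x]\}$ rather than the standard basis, so that specializing to $M=\Delta_y$ recovers the defining expansion $[\Delta_y]=\sum_x r_{x,y}[\nabla_x]$ of the $R$-polynomials. For $x\in W$ and $M\in\mathcal{D}^b(\mathcal{O}_0^{\mathbb{Z}})$, I would introduce the generating series
\[
\phi_x(M):=\sum_{i\geq 0,\;k\in\mathbb{Z}}(-1)^i\dim\mathrm{ext}^i(\Delta_x\langle k\rangle,M)\,v^k\;\in\;\mathbb{Z}[v,v^{-1}],
\]
which is a well-defined Laurent polynomial since $\mathrm{Ext}^i_{\mathcal{O}}(\Delta_x,M)$ is finite-dimensional and decomposes as $\bigoplus_{k}\mathrm{ext}^i(\Delta_x\langle k\rangle,M)$. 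The aim is then to establish the graded identity
\[
[M]=\sum_{x\in W}\phi_x(M)\,[\nabla_x]\qquad\text{in }\mathrm{Gr}(\mathcal{D}^b(\mathcal{O}_0^{\mathbb{Z}})).
\]

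I would first check this on the generators $M=\nabla_y\langle j\rangle[\ell]$. The ungraded orthogonality $\mathrm{Ext}^i_{\mathcal{O}}(\Delta_x,\nabla_y)=\delta_{i,0}\delta_{x,y}\mathbb{C}$ from the highest weight structure of $\mathcal{O}_0$, combined with the identity $\mathrm{ext}^i(\Delta_x\langle k\rangle,\nabla_y\langle j\rangle)\cong\mathrm{ext}^i(\Delta_x\langle k-j\rangle,\nabla_y)$, forces the graded ext to vanish outside the single tuple $(x,i,k)=(y,0,j)$. There the chosen normalization of the graded lifts places the top $L_y$ of $\Delta_y$ and the socle $L_y$ of $\nabla_y$ so that the unique (up to scalar) morphism $\Delta_y\twoheadrightarrow L_y\hookrightarrow\nabla_y$ is degree-preserving, and $\mathrm{hom}(\Delta_y,\nabla_y)$ is one-dimensional. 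Together with the sign $(-1)^\ell$ coming from $\mathrm{Ext}^i(X,Y[\ell])=\mathrm{Ext}^{i+\ell}(X,Y)$, a direct computation then confirms the identity for these generators. To pass to arbitrary $M$, both sides of the identity are additive across distinguished triangles (the right-hand side via the long exact sequence in $\mathrm{ext}$), and the shifted costandards generate $\mathcal{D}^b(\mathcal{O}_0^{\mathbb{Z}})$ as a triangulated category.

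Finally, specializing the identity to $M=\Delta_y$ and comparing with the definition $[\Delta_y]=\sum_x r_{x,y}[\nabla_x]$ gives $\phi_x(\Delta_y)=r_{x,y}$; extracting the coefficient of $v^k$ on both sides yields the theorem. The main subtlety I anticipate lies in the sign and degree bookkeeping: one must align the convention that $v$ acts as $\langle -1\rangle$ on $\mathrm{Gr}(\mathcal{O}_0^{\mathbb{Z}})$ with the chosen exponent of $v$ in $\phi_x$, so that the generating series of Euler characteristics matches $r_{x,y}=\sum_k r_{x,y}^{(k)}v^k$ rather than its $v\leftrightarrow v^{-1}$ involution. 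Once this is fixed, the argument is a routine graded upgrade of Delorme's proof.
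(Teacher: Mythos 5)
Your proposal is correct and follows essentially the same argument as the paper: the paper also establishes the graded expansion $[M]=\sum_{x,i,k}(-1)^i\dim\mathrm{ext}^i(\Delta_x\langle k\rangle,M)\,[\nabla_x\langle k\rangle]$ by verifying it on shifted costandards via homological orthogonality, extending to general $M$ by additivity over distinguished triangles and generation, and then specializing to $M=\Delta_y$ against the defining expansion of the $R$-polynomials. Your packaging via the Laurent series $\phi_x(M)$ is an equivalent reformulation; the one point to nail down is exactly the $v^k$ versus $v^{-k}$ normalization you flag (since $v$ acts by $\langle -1\rangle$, one has $v^k[\nabla_x]=[\nabla_x\langle -k\rangle]$, so the exponent in $\phi_x$ must be chosen to make the identity hold already on the generators $\nabla_y\langle m\rangle$), and this bookkeeping is present, if somewhat implicit, in the paper's version as well.
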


\begin{proof}
First, we claim that, for any $M\in\mathcal{D}^b(\mathcal{O}_0^{\mathbb{Z}})$, 
we have the following relation in $\mathrm{Gr}(\mathcal{D}^b(\mathcal{O}_0^{\mathbb{Z}}))$:
\begin{equation}\label{eq3.2-2}
[M]= \sum_{x\in W}\sum_{i\geq 0}\sum_{k\in\mathbb{Z}}(-1)^i
\dim\mathrm{ext}^i(\Delta_x\langle k\rangle,M)[\nabla_x\langle k\rangle].
\end{equation}
Indeed, for $M=\nabla_y\langle m\rangle$, this formula follows from the fact that standard
and costandard modules in $\mathcal{O}_0^{\mathbb{Z}}$ are homologically orthogonal.
For the general $M$, Formula~\eqref{eq3.2-2} now follows using the additivity of both sides
with respect to distinguished triangles and the fact that costandard modules generate
$\mathcal{D}^b(\mathcal{O}_0^{\mathbb{Z}})$.
 
The claim of the proposition is obtaned from Formula~\eqref{eq3.2-2} by plugging in
$M=\Delta_y$ and using that $\{[\Delta_x\langle k\rangle]\,:\,x\in W,k\in\mathbb{Z}\}$ 
is a basis in $\mathrm{Gr}(\mathcal{D}^b(\mathcal{O}_0^{\mathbb{Z}}))$ and the definition of
$R$-polynomials.
\end{proof}

\section{General results}\label{s4}

\subsection{General  setup}\label{s4.1}

For $i,j\in\mathbb{Z}$, $x,y\in W$ and $k\in\mathbb{Z}_{\geq 0}$, set
\begin{displaymath}
E(x,y,i,j,k):=\dim\mathrm{ext}^k(\Delta_x\langle i\rangle,\Delta_y\langle j\rangle). 
\end{displaymath}
An ultimate goal would be to find a formula for $E(x,y,i,j,k)$. 
Let us start by listing some straightforward properties:

\begin{proposition}\label{prop4.1-1}
For $i,j\in\mathbb{Z}$, $x,y\in W$ and $k\in\mathbb{Z}_{\geq 0}$, we have:
\begin{enumerate}[$($a$)$]
\item \label{prop4.1-1.1} $E(x,y,i,j,k)=E(x,y,i+a,j+a,k)$, for all $a\in\mathbb{Z}$.
\item \label{prop4.1-1.2} $E(x,y,i,j,k)\neq 0$ implies  $x\geq y$.
\item \label{prop4.1-1.3} $E(x,y,i,j,k)=E(w_0x^{-1}w_0,w_0y^{-1}w_0,-i,-j,k+j-i)$.
\item \label{prop4.1-1.4} $E(x,y,i,j,k)=E(x^{-1},y^{-1},-i,-j,k+j-i)$.
\item \label{prop4.1-1.5} $E(x,y,i,j,k)=E(w_0y,w_0x,-i,-j,k)$.
\end{enumerate}
\end{proposition}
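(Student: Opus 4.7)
\textbf{Part (a)} is immediate: the grading shift $\langle a\rangle$ is an auto-equivalence of $\mathcal{O}_0^{\mathbb{Z}}$, so applying it simultaneously to both arguments of $\mathrm{ext}^k$ preserves dimensions. For \textbf{part (b)}, I would appeal to the highest weight structure of $\mathcal{O}_0$ with respect to the opposite Bruhat order (Subsection~\ref{s2.1}). Concretely, a projective resolution of $\Delta_x$ uses only $P_z$'s with $z\leq x$ in Bruhat, and $\hom(P_z,\Delta_y)=0$ unless $z=y$ since $\Delta_y$ has simple top $L_y$; so a nonzero $\mathrm{ext}^k(\Delta_x\langle i\rangle,\Delta_y\langle j\rangle)$ forces $y\leq x$ in Bruhat, i.e., $x\geq y$.

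\textbf{Part (c)} is the central duality computation and would follow by applying the Koszul-Ringel self-duality $\mathrm{RK}$ from Subsection~\ref{s2.6}. Starting from the given intertwiner $\mathrm{K}\langle j\rangle\cong[j]\langle -j\rangle\mathrm{K}$ and the fact that $\mathcal{L}\top_{w_0}$ commutes with grading shifts, I would first deduce $\mathrm{RK}\langle j\rangle\cong[j]\langle -j\rangle\mathrm{RK}$. Applying $\mathrm{RK}$ to $\mathrm{Hom}_{\mathcal{D}^b}(\Delta_x\langle i\rangle,\Delta_y\langle j\rangle[k])$ and using $\mathrm{RK}(\Delta_w)=\Delta_{w_0w^{-1}w_0}$, the cohomological shifts $[i]$ and $[j]$ produced on the two sides combine into $[j+k-i]$ while the grading shifts pick up a sign, producing the formula in (c).

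For \textbf{parts (d) and (e)}, I would compose $\mathrm{RK}$ and related dualities with further natural symmetries of the category. For (d), use the auto-equivalence $\phi$ of $\mathcal{O}_0^{\mathbb{Z}}$ induced by the Dynkin diagram automorphism $-w_0$ (equivalently, conjugation by $w_0$ on $W$); this sends $\Delta_w$ to $\Delta_{w_0 w w_0}$ and commutes with both $\langle\cdot\rangle$ and $[\cdot]$, so the composite $\phi\circ\mathrm{RK}$ sends $\Delta_w$ to $\Delta_{w^{-1}}$ while preserving the same intertwiner as $\mathrm{RK}$, yielding (d) verbatim with the shift $k+j-i$. For (e), combine the contravariant $\star$-duality (sending $\Delta_w\langle i\rangle\mapsto\nabla_w\langle -i\rangle$ and swapping the two Ext arguments) with the Ringel self-duality $(\mathcal{L}\top_{w_0})^{\pm 1}$ to convert costandards back to standards; since neither $\star$ nor $\mathcal{L}\top_{w_0}$ mixes grading with cohomology, the cohomological index $k$ is preserved.

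The \textbf{main obstacle} is the careful bookkeeping of grading and cohomological shifts. The non-trivial relation $\mathrm{K}\langle j\rangle\cong[j]\langle -j\rangle\mathrm{K}$ for the Koszul functor is precisely what forces the appearance of $k\mapsto k+j-i$ in (c) and (d), whereas the absence of such mixing for $\mathcal{L}\top_{w_0}$ and $\star$ leaves $k$ unchanged in (e); matching these two behaviors and verifying the effect on both arguments of $\mathrm{Hom}$ is the subtle part. One additionally has to confirm that $\phi$ exists as an auto-equivalence of $\mathcal{O}_0^{\mathbb{Z}}$ (which it does, as a twist by an automorphism of $\mathfrak{g}$ preserving the triangular decomposition), noting that $\phi$ is trivial exactly when $-w_0$ acts trivially on the Dynkin diagram, in which case (d) already follows from (c).
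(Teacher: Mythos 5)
Your proposal takes essentially the same route as the paper: (a) by shift being an auto-equivalence, (b) by standardness in the highest weight structure, (c) by Koszul--Ringel duality, (d) by composing (c) with the Dynkin automorphism $w\mapsto w_0ww_0$, and (e) by combining Ringel self-duality with $\star$. Your bookkeeping of the intertwiner $\mathrm{RK}\langle j\rangle\cong[j]\langle-j\rangle\mathrm{RK}$ and its effect in (c) and (d) is correct and usefully explicit. One small caution on (e): if you actually carry out the step you outline (apply $\star$, turning $\Delta_x\langle i\rangle\mapsto\nabla_x\langle -i\rangle$ and swapping arguments, then apply $(\mathcal{L}\top_{w_0})^{-1}$), you land on $E(w_0y,w_0x,-j,-i,k)$; since $\star$ is contravariant, the two grading shifts get exchanged along with the two arguments. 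This differs from the formula $E(w_0y,w_0x,-i,-j,k)$ printed in the proposition (by part (a) they are not the same unless $i=j$), and a quick check at $k=0$ against the graded $\hom$ formula confirms that the $-j,-i$ order is the consistent one; so you should not take the printed order of $-i,-j$ at face value when completing the computation.
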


\begin{proof}
Property~\eqref{prop4.1-1.1} follows from the fact that the shift of grading is 
an auto-equivalence. Property~\eqref{prop4.1-1.2} is a ususal property of standard 
modules in highest weight categories. Property~\eqref{prop4.1-1.3} follows from
Koszul-Ringel duality. Property~\eqref{prop4.1-1.4} follows from 
Property~\eqref{prop4.1-1.3}  since conjugation by $w_0$ corresponds to an 
automorphism of the Dynkin diagram, which induces a(n highest weight) auto-equivalence on $\mathcal O_0$. Property~\eqref{prop4.1-1.5} follows by
applying first $\mathcal{L}\top_{w_0}$ and then $\star$.
\end{proof}

Due to Proposition~\ref{prop4.1-1}\eqref{prop4.1-1.1}, we can consider the case $j=0$.
We define the polynomial $\tE_{y,x}(\upsilon,\omega)$ as follows:
\begin{displaymath}
\tE_{y,x}(\upsilon,\omega)= 
\sum_{i\in\mathbb{Z}}\sum_{k\in\mathbb{Z}_{\geq 0}} E(x,y,i,0,k)\upsilon^{-i}\omega^k.
\end{displaymath}
Note that $\tE_{y,x}$ is polynomial in $\omega$ and a Laurent polynomial
in $\upsilon$. \hk{Theorem~\ref{thm3.2-1} says $\tE_{x,y}(\upsilon,-1)=r_{x,y}(\upsilon)$.}

\hk{A convenient normalization is via the change of variables $u:=\upsilon^{-1}\omega, v:=\omega$. We thus obtain $\exy\in\mathbb Z[u^{\pm 1},v^{\pm 1}]$ such that
\begin{equation*}
    \exy(u,v) = \sum_{i,k\in\mathbb Z_{\geq 0}}E(x,y,k-i,0,k)u^iv^k.
\end{equation*}
Here we have $\exy(-v,-1)=r_{x,y}(v)$, and Proposition~\ref{prop4.1-1} is expressed as follows. 
\begin{proposition}\label{prop4poly}
We have
\begin{enumerate}[$($a$)$]
\item \label{prop5.2} $\exy = 0$ unless $y\leq x$.
\item \label{prop5.3} $\exy(u,v) = \e_{w_0y^{-1}w_0,w_0x^{-1}w_0}(v,u)$.
\item \label{prop5.4} $\exy(u,v) =\e_{y^{-1},x^{-1}}(v,u)$.
\item \label{prop5.5} $\exy(u,v)=\e_{w_0x,w_0y}(uv^{-1},v)$.
\end{enumerate}
\end{proposition}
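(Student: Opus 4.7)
The plan is to derive each part of Proposition~\ref{prop4poly} as a direct polynomial consequence of the corresponding statement in Proposition~\ref{prop4.1-1}, using the reindexed form
\begin{displaymath}
\exy(u,v)=\sum_{a,b}E(x,y,a,0,b)\,u^{b-a}v^{b}
\end{displaymath}
of the defining expansion (obtained from the given formula by setting $a=k-i$, $b=k$).

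Part~(\ref{prop5.2}) is immediate from Proposition~\ref{prop4.1-1}(\ref{prop4.1-1.2}): if $y\not\leq x$ then every coefficient $E(x,y,\cdot,0,\cdot)$ vanishes, hence so does $\exy$.

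For parts~(\ref{prop5.3}),~(\ref{prop5.4}) and~(\ref{prop5.5}) the strategy is uniform. I would expand the right-hand side of the claimed identity according to the displayed formula, so that its coefficients appear as $E$-values at transformed Weyl group arguments; invoke the corresponding item of Proposition~\ref{prop4.1-1} to rewrite these coefficients back as $E(x,y,\cdot,0,\cdot)$, using item~(\ref{prop4.1-1.3}) for~(\ref{prop5.3}), item~(\ref{prop4.1-1.4}) for~(\ref{prop5.4}), and item~(\ref{prop4.1-1.5}) for~(\ref{prop5.5}); and then reindex the summation (by a substitution such as $a\mapsto -a$ or $b\mapsto b-a$) to identify the result with $\exy(u,v)$. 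Item~(\ref{prop4.1-1.1}) is also used whenever a nonzero second grading argument has to be collapsed back to $0$.

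The main obstacle is pure bookkeeping: tracking how each involution on $(i,j,k)$ translates into a substitution on the formal variables $(u,v)$. For~(\ref{prop5.3}) and~(\ref{prop5.4}) the underlying involutions, after setting $j=0$, both act as $(i,k)\mapsto(-i,k-i)$, which under our normalization is exactly the swap $u\leftrightarrow v$; this explains the common shape $\exy(u,v)=\e_{\star,\star}(v,u)$ of the two identities. For~(\ref{prop5.5}) the involution preserves $k$ and negates $i$, which, combined with the reduction via~(\ref{prop4.1-1.1}) of the second grading back to $0$, produces the substitution $(u,v)\mapsto(uv^{-1},v)$. Throughout one must also remember that the subscripts of $\e_{y,x}$ are written in the order $(\text{target},\text{source})$, opposite to the arguments of $E(x,y,\cdot)$. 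With these conventions in place, each of the claimed identities reduces to a term-by-term check of monomials.
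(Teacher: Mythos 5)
The paper itself offers no explicit proof of Proposition~\ref{prop4poly}; it is presented as ``Proposition~\ref{prop4.1-1} expressed as follows,'' so your overall strategy---treating each item as a coefficient-wise reformulation of the corresponding item of Proposition~\ref{prop4.1-1}---is exactly the intended argument, and the reindexed form $\exy(u,v)=\sum_{a,b}E(x,y,a,0,b)\,u^{b-a}v^{b}$ is the right starting point. Your handling of part~(\ref{prop5.2}) is fine, and your observation for parts~(\ref{prop5.3})--(\ref{prop5.4}) that the involutions $(i,k)\mapsto(-i,k-i)$ translate into the swap $u\leftrightarrow v$ does check out by direct computation: $E(x,y,a,0,b)$ sits at $u^{b-a}v^{b}$ on the left, and the transformed value $E(\cdot,\cdot,-a,0,b-a)$ sits at $u^{b}v^{b-a}$ on the right.

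The analysis of part~(\ref{prop5.5}) is where the proposal breaks down. With $j=0$, Proposition~\ref{prop4.1-1}(\ref{prop4.1-1.5}) gives $E(x,y,i,0,k)=E(w_0y,w_0x,-i,0,k)$; the second grading is already $0$, so there is no ``reduction via~(\ref{prop4.1-1.1})'' to perform, and that extra step does not generate a $v^{-1}$. Running the same monomial bookkeeping you used for (\ref{prop5.3}) and (\ref{prop5.4}): the coefficient $E(x,y,a,0,b)$, which sits at $u^{b-a}v^b$ in $\exy$, is carried to $E(w_0y,w_0x,-a,0,b)$, which sits at $u^{b+a}v^{b}$ in $\e_{w_0x,w_0y}$. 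Writing $\alpha=b-a$, $\beta=b$, this is the map $u^{\alpha}v^{\beta}\mapsto u^{2\beta-\alpha}v^{\beta}$, i.e.\ the monomial substitution $(u,v)\mapsto(u^{-1},u^{2}v)$---not $(u,v)\mapsto(uv^{-1},v)$. So the conclusion you want to reach for~(\ref{prop5.5}) does not follow from the bookkeeping you describe. In fact, if one tests the identity $\exy(u,v)=\e_{w_0x,w_0y}(uv^{-1},v)$ in small cases (type $A_1$ with $x=s$, $y=e$, or type $A_2$ with $x=st$, $y=e$, using the explicit data of Subsections~\ref{s6.1}--\ref{s6.2}), it fails, whereas $\exy(u,v)=\e_{w_0x,w_0y}(u,v)$ holds; the discrepancy traces back to the form of Proposition~\ref{prop4.1-1}(\ref{prop4.1-1.5}) and is worth flagging rather than papering over. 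A careful term-by-term check of~(\ref{prop5.5}) in the style you carried out for~(\ref{prop5.3}) would have surfaced this.
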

}

\subsection{Bounds in terms of KL polynomials}\label{sskl}

\begin{proposition}\label{klkl}
For $x,y\in W$ we have
    \begin{equation}\label{dhom=pp}
    \sum_{a,b\in\mathbb Z} \dim\hom(\Delta_y\langle b-a \rangle,\mathcal T_a(\Delta_x))u^b v^{a} = \sum_{z\in W} p_{y w_0,z w_0}(u) p_{x,z}(v).
\end{equation}
   (Note that the summand $ p_{y w_0,z w_0}(u) p_{x,z}(v)$ is zero unless $x\leq z\leq y$.)
\end{proposition}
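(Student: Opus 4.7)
The plan is to split the left-hand side along the indecomposable summands of $\mathcal T_a(\Delta_x)$, so that each of the two factors on the right-hand side acquires a separate categorical meaning. Writing $\mathcal T_a(\Delta_x) = \bigoplus_{z \in W} T_z\langle a\rangle^{m_{z,a}}$, one has
\[
\dim\hom(\Delta_y\langle b-a\rangle, \mathcal T_a(\Delta_x)) = \sum_{z \in W} m_{z,a}\cdot\dim\hom(\Delta_y\langle b-a\rangle, T_z\langle a\rangle),
\]
after which one substitutes back into the double sum and evaluates the two factors separately before regrouping by $z$.

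To identify the multiplicities $m_{z,a}$, I would use the equivalence $\mathscr{LC}(T)\simeq\mathcal O_0^{\mathbb Z}$ furnished by the Koszul--Ringel self-duality. The linear complex $\mathcal T_\bullet(\Delta_x)$ corresponds under $\mathrm{RK}$ to $\Delta_{w_0 x^{-1}w_0}$, and the intertwining relation $\mathrm{RK}\langle j\rangle\cong[j]\langle -j\rangle\mathrm{RK}$ matches the summand $T_z\langle a\rangle$ sitting in homological position $a$ to the graded composition factor $L_{w_0 z^{-1}w_0}\langle -a\rangle$ of $\Delta_{w_0 x^{-1}w_0}$. Combining the graded Kazhdan--Lusztig character formula for Vermas with the inverse symmetry $p_{u,v}(t)=p_{u^{-1},v^{-1}}(t)$ and the $w_0$-conjugation symmetry of KL polynomials (trivial when $w_0$ is central and realised by the Dynkin-diagram automorphism otherwise), I expect the $v^a$-generating function of the $m_{z,a}$ to collapse to $p_{x,z}(v)$. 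In parallel, for a single tilting I would use that $\dim\hom(\Delta_y\langle c\rangle,T_z) = (T_z:\nabla_y\langle c\rangle)_\nabla$ by the standard hom-orthogonality between standards and costandards, and transport the graded $\Delta$-filtration of $P_{w_0 z}$ (whose multiplicities are read off from $p_{u,w_0 z}$) through the Ringel duality $\mathcal L\top_{w_0}(P_{w_0 z})=T_z$ and $\mathcal L\top_{w_0}(\Delta_u)=\nabla_{w_0 u}$ to obtain the graded $\nabla$-filtration of $T_z$; after the same symmetries and after tracking the shift $c=b-a$ into the argument of $T_z\langle a\rangle$, the $u^b$-generating function should assemble into $p_{yw_0,zw_0}(u)$.

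Combining these two computations and regrouping the double sum by $z$, the dependence on $a$ becomes $p_{x,z}(v)$ and the dependence on $b$ becomes $p_{yw_0,zw_0}(u)$, producing $\sum_z p_{yw_0,zw_0}(u)p_{x,z}(v)$ as claimed. The Bruhat support $x\leq z\leq y$ in the parenthetical remark is then automatic from the vanishing conditions $p_{x,z}=0$ unless $x\leq z$ and $p_{yw_0,zw_0}=0$ unless $yw_0\leq zw_0$, i.e. $y\geq z$. The main obstacle will be the careful bookkeeping of grading shifts: the $\langle a\rangle$ twist of the tilting summand, the $\langle -a\rangle$ twist appearing on the Koszul--Ringel dual side from $\mathrm{RK}\langle a\rangle\cong[a]\langle -a\rangle\mathrm{RK}$, the $\langle b-a\rangle$ twist inside the hom, and the grading behaviour of $\mathcal L\top_{w_0}$ on $\Delta$- and $\nabla$-filtrations all have to align precisely so that the $u^b v^a$ weighting on the left matches the clean product on the right for each $z$.
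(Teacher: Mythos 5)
Your proposal is correct and follows essentially the same route as the paper's proof: you decompose $\mathcal T_a(\Delta_x)$ into indecomposable tilting summands, compute the multiplicities $m_{z,a}$ via Koszul--Ringel duality (matching $T_z\langle a\rangle$ in position $a$ to the composition factor $L_{w_0z^{-1}w_0}\langle -a\rangle$ of $\Delta_{w_0x^{-1}w_0}$) and the standard KL symmetries, and compute $\dim\hom(\Delta_y\langle c\rangle,T_z)$ via hom-orthogonality and Ringel duality $P_{w_0z}\leftrightarrow T_z$. The paper phrases the first half through BGG reciprocity rather than the Verma character formula directly, but these are equivalent, and the indexing subtleties you flag resolve exactly as you anticipate.
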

\begin{proof}
The Koszul-Ringel duality gives
\begin{equation}
    [\Delta_{w_0x\inv w_0}:L_{w_0z\inv w_0}\langle -a\rangle] = [\T(\Delta_x):T_z\langle a \rangle[a] ].
\end{equation}
The left hand side is, by the BGG reciprocity, equal to
$[P_{w_0z\inv w_0}:\Delta_{w_0x\inv w_0}\langle -a\rangle]$. 
It follows that
\begin{equation}\label{TinT}
    \sum_a[\T(\Delta_x):T_z\langle a \rangle[a] ]v^a = p_{w_0x\inv w_0,w_0z\inv w_0}(v) = p_{x,z}(v). 
\end{equation}

On the other hand, for each $z\in W$, we have
\[\dim \hom(\Delta_y,T_z\langle b\rangle) = [T_z : \nabla_y\langle -b\rangle]  = [P_{z w_0}:\Delta_{y w_0}\langle -b\rangle],\]
and thus
\begin{equation}\label{DinT}
    \sum_b \dim \hom(\Delta_y,T_z\langle b\rangle)u^b = p_{y w_0,z w_0}(u). 
\end{equation}

Combining \eqref{DinT} and \eqref{TinT}, we obtain the claimed equation.    
\end{proof}

Given $p,q\in \mathbb Z[u^\pm,v^\pm]$, we write $p\leq q$ if the coefficients of each monomial is smaller for $p$ than for $q$, that is, $p_{ij}\leq q_{ij}$ for all $i,j\in \mathbb Z$ where $p=\sum_{ij}p_{ij}u^iv^j,q=\sum_{ij}q_{ij}u^iv^j$.
\begin{corollary}\label{cor.klbound}
    For $x,y\in W$ we have
    \begin{equation}
        \e_{x,y}(u,v)\leq \sum_{z\in W} p_{y w_0,z w_0}(u) p_{x,z}(v).
    \end{equation}
\end{corollary}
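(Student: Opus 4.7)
The plan is to establish the inequality coefficient by coefficient. By Proposition~\ref{klkl}, the coefficient of $u^i v^k$ on the right-hand side of the stated inequality equals $\dim\operatorname{hom}(\Delta_y\langle k-i\rangle,\mathcal T_k(\Delta_x))$. On the other hand, the coefficient of $u^i v^k$ in $\e_{x,y}(u,v)$ is (by definition) the dimension of a graded Ext space between the Verma modules $\Delta_x$ and $\Delta_y$ corresponding to $E(y,x,k-i,0,k)=\dim\operatorname{ext}^k(\Delta_y\langle k-i\rangle,\Delta_x)$. Thus the corollary reduces to the coefficient-wise assertion
\[
\dim\operatorname{ext}^k(\Delta_y\langle m\rangle,\Delta_x)\leq \dim\operatorname{hom}(\Delta_y\langle m\rangle,\mathcal T_k(\Delta_x))
\]
for all $k\geq 0$ and $m\in\mathbb Z$ (with $m=k-i$).

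To prove this bound I would use the linear tilting coresolution $\mathcal T_\bullet(\Delta_x)$ of $\Delta_x$: an exact complex $0\to \Delta_x\to \mathcal T_0(\Delta_x)\to \mathcal T_1(\Delta_x)\to\cdots$ supplied by Koszul--Ringel duality (see Subsection~\ref{s2.6}). The critical input is the vanishing $\operatorname{Ext}^{>0}(\Delta_y\langle m\rangle,T)=0$ for every tilting module $T$, which follows at once from the fact that $\Delta_y\langle m\rangle$ admits a standard filtration and $T$ admits a costandard filtration, together with standard-costandard orthogonality. Consequently, applying the functor $\operatorname{Hom}(\Delta_y\langle m\rangle,-)$ to the tilting coresolution produces a complex whose $k$-th cohomology is naturally identified with $\operatorname{Ext}^k(\Delta_y\langle m\rangle,\Delta_x)$. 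The elementary fact that $\dim H^k(C^\bullet)\leq\dim C^k$ for any cochain complex $C^\bullet$ then gives the required coefficient-wise inequality.

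Summing over all $i,k$, these chain-level bounds assemble into the generating-function inequality of the corollary, with the right-hand side being the polynomial $\sum_{z\in W} p_{yw_0,zw_0}(u)p_{x,z}(v)$ supplied by Proposition~\ref{klkl}.

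The key step is conceptually immediate once the inputs are in place; all the substance is carried by the existence of the linear tilting coresolution (which depends on Koszul--Ringel duality) and by the Ext-vanishing from modules with standard filtrations into modules with costandard filtrations. The only nontrivial bookkeeping is in matching the grading conventions so that the term $\mathcal T_k(\Delta_x)$ pairs with the graded Verma $\Delta_y\langle k-i\rangle$ in precisely the way prescribed by Proposition~\ref{klkl}.
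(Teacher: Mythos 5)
Your proof is correct and takes essentially the same route as the paper's: since $\operatorname{ext}^k(\Delta_y\langle m\rangle,\Delta_x)$ is the cohomology of $\hom(\Delta_y\langle m\rangle,\T(\Delta_x))$ (which you justify carefully via the Ext-vanishing from standard-filtered modules into tilting modules), the term-wise dimension bound $\dim H^k\leq\dim C^k$ combined with Proposition~\ref{klkl} gives the claimed coefficient-wise inequality. The only minor caveat is in matching the grading shift convention (Proposition~\ref{klkl} records the coefficient of $u^bv^a$ as $\dim\hom(\Delta_y\langle b-a\rangle,\mathcal T_a(\Delta_x))$, so the same shift $\langle b-a\rangle$ should appear in the corresponding Ext space), but this bookkeeping issue, which you flag yourself, does not affect the validity of the core argument.
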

    \begin{proof}
Since $\operatorname{ext}^a(\Delta_y\langle b-a\rangle,\Delta_x)$ is computed as the homology of $\hom(\Delta_y\langle b-a \rangle [a],\T(\Delta_x))$, the claim follows from  Proposition~\ref{klkl}.
\end{proof}

\begin{corollary}\label{cor.shiftboundtilde}
    We have $\exy\in \mathbb Z[u,v]$. Moreover, for $x\geq y$ we have
    \begin{enumerate}
        \item $\deg_u \exy = \ell(x)-\ell(y)$;
        \item $\deg_v \exy = \ell(x)-\ell(y)$;
        \item $\deg \exy = \ell(x)-\ell(y)$;
        \item\label{parity} the degree of each monomial appearing in $\exy$ has the same parity as $\ell(x)-\ell(y)$,
    \end{enumerate}
    where $\deg_u,\deg_v$ denote the degrees with respect to the variables $u,v$, respectively. In fact, the coefficients of $\exy(u,v)$ at $u^{\ell(x)-\ell(y)}$ and at $v^{\ell(x)-\ell(y)}$ are both $1$.
\end{corollary}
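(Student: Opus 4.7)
The plan is to read off the statement from the Kazhdan-Lusztig upper bound in Corollary~\ref{cor.klbound}, together with the zeroth extension formula of Subsection~\ref{ss.Vermainclusion} and the Koszul-Ringel symmetry of Proposition~\ref{prop4poly}\eqref{prop5.3}. Applying Corollary~\ref{cor.klbound} after interchanging $x$ and $y$ yields the coefficient-wise bound
\begin{equation*}
\exy(u,v) \;\leq\; \sum_{z\in W} p_{x w_0,\, z w_0}(u)\, p_{y,z}(v),
\end{equation*}
in which the summand is nonzero only when $y\leq z\leq x$. Since the right-hand side lies in $\mathbb Z_{\geq 0}[u,v]$, this already forces $\exy\in\mathbb Z[u,v]$.

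For the degree and parity claims, I use that each Kazhdan-Lusztig polynomial $p_{a,b}$ in Soergel's normalization belongs to $\mathbb Z_{\geq 0}[v]$, has exact degree $\ell(b)-\ell(a)$ with leading coefficient $1$, and has all of its nonzero monomials of degree congruent to $\ell(b)-\ell(a)$ modulo $2$ (a standard consequence of the bar-invariance of the Kazhdan-Lusztig basis). Hence, for $y\leq z\leq x$, the $z$-summand of the bound has $u$-degree $\ell(x)-\ell(z)$, $v$-degree $\ell(z)-\ell(y)$, and each of its monomials $u^av^b$ satisfies $a+b\leq \ell(x)-\ell(y)$ with $a+b\equiv \ell(x)-\ell(y)\pmod 2$. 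Transferring these estimates to $\exy$ through the bound gives the three inequalities $\deg_u\exy,\deg_v\exy,\deg\exy\leq\ell(x)-\ell(y)$ and the parity statement \eqref{parity}. Moreover, only $z=y$ can realise the maximal $u$-degree in the bound (as $z\geq y$ with $\ell(z)=\ell(y)$ forces $z=y$), and its summand contributes the single monomial $u^{\ell(x)-\ell(y)}$ with coefficient $1$; thus the coefficient of $u^{\ell(x)-\ell(y)}v^0$ in $\exy$ is at most $1$.

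Matching this upper bound with the zeroth extension formula from Subsection~\ref{ss.Vermainclusion}, the same coefficient equals $\dim\hom(\Delta_x\langle\ell(x)-\ell(y)\rangle,\Delta_y)=1$, which proves $\deg_u\exy=\ell(x)-\ell(y)$ together with the $u^{\ell(x)-\ell(y)}$-coefficient being $1$. The analogous $v^{\ell(x)-\ell(y)}$-statement is transported from this through the Koszul-Ringel involution of Proposition~\ref{prop4poly}\eqref{prop5.3}, which identifies $\exy(u,v)$ with $\e_{w_0 y^{-1}w_0,\,w_0 x^{-1}w_0}(v,u)$; writing $\alpha=w_0 y^{-1}w_0$ and $\beta=w_0 x^{-1}w_0$, the map $w\mapsto w_0 w^{-1}w_0$ preserves length and Bruhat order, so $\beta\geq\alpha$ with $\ell(\beta)-\ell(\alpha)=\ell(x)-\ell(y)$, and the just-proved $u$-leading coefficient claim for $\e_{\alpha,\beta}$ becomes, after the variable swap, the desired $v$-leading coefficient claim for $\exy$. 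Finally, $\deg\exy=\ell(x)-\ell(y)$ follows from the presence of both $u^{\ell(x)-\ell(y)}$ and $v^{\ell(x)-\ell(y)}$ in $\exy$. The only subtle point is the final bookkeeping in the Koszul-Ringel argument; once the length-preserving conjugation $w\mapsto w_0 w^{-1}w_0$ is correctly coordinated with the variable swap, the entire proof reduces to a routine consequence of the Kazhdan-Lusztig upper bound.
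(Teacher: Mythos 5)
Your argument is correct and takes essentially the same route as the paper: you derive the inequalities and the parity constraint from the Kazhdan--Lusztig bound of Corollary~\ref{cor.klbound}, identify the $u^{\ell(x)-\ell(y)}$-coefficient with the one-dimensional $\hom$-space of Subsection~\ref{ss.Vermainclusion}, and transport that to the $v^{\ell(x)-\ell(y)}$-coefficient by the Koszul--Ringel symmetry (the paper cites Proposition~\ref{prop4.1-1}\eqref{prop4.1-1.3}, you cite the equivalent Proposition~\ref{prop4poly}\eqref{prop5.3}). The intermediate observation that only $z=y$ can contribute the top $u$-power, while harmless, is not needed: the $u^{\ell(x)-\ell(y)}v^0$-coefficient of $\exy$ is \emph{by definition} the dimension of that hom-space, so equality with $1$ and hence $\deg_u\exy=\ell(x)-\ell(y)$ follows directly without first arguing that the bound caps it at $1$.
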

\begin{proof}
     Since $p_{w,w'}(\upsilon)\in\mathbb Z_{\geq 0}[\upsilon]$ with the parity vanishing property and $\deg p_{w,w'} = \ell(w')-\ell(w)$ for all $w,w'\in W$ with $w\leq w'$ (see Subsection~\ref{s2.3}), Corollary~\ref{cor.klbound} provides the first statement, the inequality ``$\leq$'' in all three numbered claims, and \eqref{parity}. To have ``$=$'' in the numbered claims, it is enough to prove the last remark. But the coefficients of $\exy(u,v)$ at $u^{\ell(x)-\ell(y)}$ and at $v^{\ell(x)-\ell(y)}$ are the same by Proposition~\ref{prop4.1-1}\eqref{prop4.1-1.3}, where the former is the dimension of $\hom(\Delta_x\langle \ell(x)-\ell(y)\rangle,\Delta_y)$. The latter space consists of the unique inclusion between Verma modules (see Subsection~\ref{ss.Vermainclusion}), and thus has dimension one. This completes the proof.
\end{proof}

\begin{corollary}\label{cor.shiftbound}
    If $E(x,y,i,0,k)\neq 0$ for $x\geq y$, then
\begin{enumerate}[$($a$)$]
\item\label{prop4.2-1.1} $0\leq k\leq \ell(x)-\ell(y)$;
\item\label{prop4.2-1.2} $-2k+\ell(y)-\ell(x)\leq i\leq -k$;
\item\label{prop4.2-1.3} if $k=0$, then $i=\ell(y)-\ell(x)$;
\item\label{prop4.2-1.4} if $k=-i$, then $k=-i=\ell(x)-\ell(y)$;
\item\label{prop4.2-1.5} $\ell(y)-\ell(x)-i$ is even.
\end{enumerate}
Thus, each $(k,i)$ with nonzero $E(x,y,i,0,k)$ is in the violet region in
 Figure~\ref{fig4.2-1.5}.
\end{corollary}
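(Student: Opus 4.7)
The plan is to transcribe each of the five assertions from Corollary~\ref{cor.shiftboundtilde} through the correspondence between the generating polynomial $\exy(u,v)$ and the individual dimensions $E(x,y,i,0,k)$.

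First I would record the translation key: by the definition of $\exy(u,v)$ (via the change of variables on $\tE_{y,x}$), the nonvanishing of $E(x,y,i,0,k)$ is equivalent to the nonvanishing of the coefficient at a specific monomial $u^av^b$ in $\exy(u,v)$, where $(a,b)$ is an invertible affine-linear function of $(i,k)$. All the numbered assertions of Corollary~\ref{cor.shiftboundtilde} then transcribe mechanically: the inclusion $\exy\in\mathbb Z[u,v]$ (nonnegativity of $a$ and $b$) supplies the bound $k\ge 0$ and the upper bound on $i$ in (b); the individual degree bound $\deg_v\exy\le \ell(x)-\ell(y)$ supplies the upper bound on $k$ in (a); the total-degree bound $a+b\le \ell(x)-\ell(y)$ supplies the lower bound on $i$ in (b); and Corollary~\ref{cor.shiftboundtilde}\eqref{parity} transcribes directly to (e).

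For (c), rather than routing through $\exy$, I would argue directly: when $k=0$ the quantity $E(x,y,i,0,0)$ is just the graded Hom dimension $\dim\hom(\Delta_x\langle i\rangle,\Delta_y)$ recalled in Subsection~\ref{ss.Vermainclusion}, which is nonzero only at a single value of $i$ determined by $\ell(x)-\ell(y)$.

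Finally, (d) follows from (c) via an appropriate symmetry from Proposition~\ref{prop4.1-1}: the line $k=-i$ in the $(i,k)$-plane is the image of the axis $k=0$ under the Koszul--Ringel reflection~\eqref{prop4.1-1.3} (composed, if necessary, with one of the other listed symmetries), so (d) is the Koszul--Ringel dual of (c). The only real obstacle I foresee is bookkeeping---getting the signs right in the affine-linear substitution used for the dictionary in Step~1 and in the composition of symmetries used to deduce (d) from (c); no new mathematical content is needed beyond Corollary~\ref{cor.shiftboundtilde} and Subsection~\ref{ss.Vermainclusion}.
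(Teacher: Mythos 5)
Your proposal follows the same underlying route as the paper's one-line proof, which asserts that the claims of Corollary~\ref{cor.shiftbound} are the claims of Corollary~\ref{cor.shiftboundtilde} read through the change of variables. For items (a), (b) and (e) this is exactly right, and your transcription is the correct one: the membership $\exy\in\mathbb{Z}[u,v]$, the $v$-degree bound, the total-degree bound, and the parity-vanishing property become the stated inequalities and parity constraint on $(k,i)$ (and the $u$-degree bound is then redundant). But the paper's one-liner is a little too quick for (c) and (d): the degree bounds and parity together only confine $i$, at fixed $k$, to an arithmetic progression between two extreme values, not to the single point that (c) and (d) assert. Your decision to argue (c) directly from the graded-hom formula of Subsection~\ref{ss.Vermainclusion} supplies exactly the missing ingredient---this fact is used inside the proof of Corollary~\ref{cor.shiftboundtilde} but is not recorded in its statement---and your deduction of (d) from (c) via a symmetry of Proposition~\ref{prop4.1-1} is the right mechanism. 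For the bookkeeping you flag: Proposition~\ref{prop4.1-1}\eqref{prop4.1-1.3} by itself sends $(i,k)$ to $(-i,\,k-i)$, so you do need to compose with another listed symmetry; applying \eqref{prop4.1-1.5} and then \eqref{prop4.1-1.3} gives $E(x,y,i,0,k)=E(w_0y^{-1},w_0x^{-1},i,0,k+i)$, which carries the line $k+i=0$ to the $\hom$-axis $k=0$ as needed, with the length difference on the right-hand side being $\ell(w_0x^{-1})-\ell(w_0y^{-1})=\ell(y)-\ell(x)$. Overall your plan is correct, and if anything slightly more careful than the paper's own proof.
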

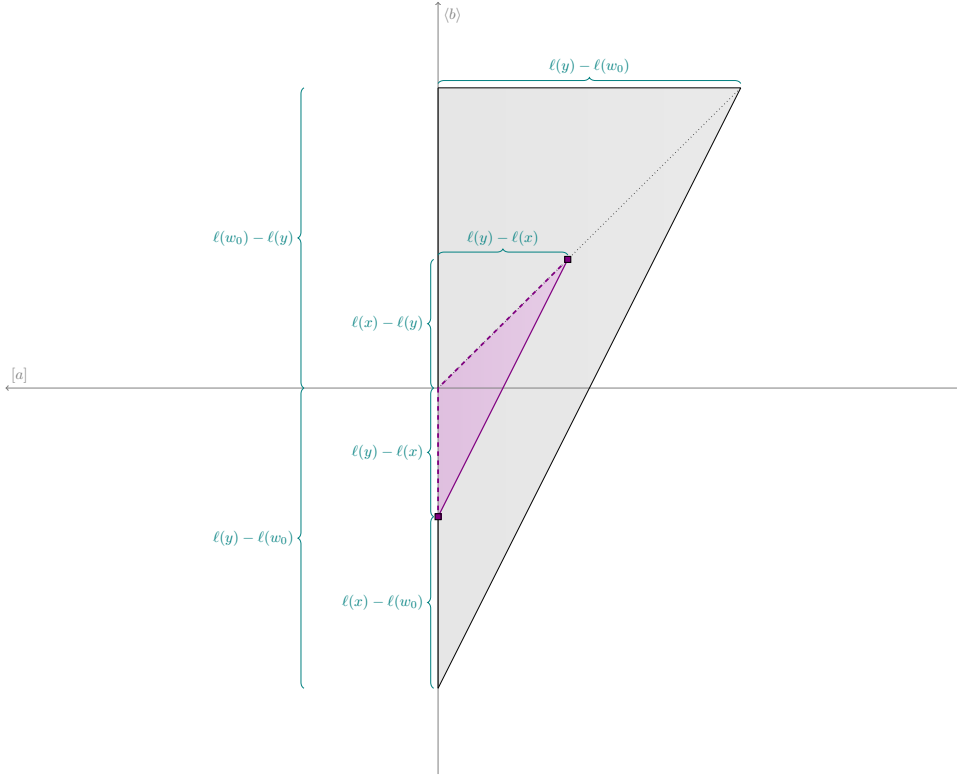
\begin{figure}
\resizebox{\textwidth}{!}{
\begin{tikzpicture}
\shade[left color = gray!20,right color = gray!17] (0,-7) -- (0,7) -- (7,7) -- cycle;
\shade[left color = violet!25,right color = violet!20] (0,-3) -- (0,0) -- (3,3) -- cycle;
\draw[gray, thin,  ->] (12,0) -- (-10,0) node[anchor=south west] {$\left[ a\right]$};
\draw[gray, thin,  ->] (0,-9) -- (0,9) node[anchor=north west] {$\langle b\rangle$};
\draw[black, thin,  -] (0,-7) -- (7,7);
\draw[black, thin,  -] (0,7) -- (7,7);
\draw[black, thin,  -] (0,7) -- (0,0);
\draw[black, thin,  -] (0,-3) -- (0,-7);
\draw[black, dotted, thin,  -] (0,0) -- (7,7);
\draw[violet, thick,  -] (0,-3) -- (3,3);
\draw[violet, dashed, very thick,  -] (0,0) -- (0,-3);
\draw[violet, dashed, very thick,  -] (3,3) -- (0,0);
\draw [fill=violet] (-0.07,-3.07) rectangle +(0.14,0.14);
\draw [fill=violet] (2.93,2.93) rectangle +(0.14,0.14);
\draw[teal, thin, decoration={brace,amplitude=4pt,raise=1mm},decorate]
(0,-3) -- (0,0) node [midway,left,xshift=-1mm] {\color{teal}$\ell(y)-\ell(x)\,\,$};
\draw[teal, thin, decoration={brace,amplitude=4pt,raise=1mm},decorate]
(0,-7) -- (0,-3) node [midway,left,xshift=-1mm] {\color{teal}$\ell(x)-\ell(w_0)\,\,$};
\draw[teal, thin, decoration={brace,amplitude=4pt,raise=1mm},decorate]
(-3,-7) -- (-3,0) node [midway,left,xshift=-1mm] {\color{teal}$\ell(y)-\ell(w_0)\,\,$};
\draw[teal, thin, decoration={brace,amplitude=4pt,raise=1mm},decorate]
(0,0) -- (0,3) node [midway,left,xshift=-1mm] {\color{teal}$\ell(x)-\ell(y)\,\,$};
\draw[teal, thin, decoration={brace,amplitude=4pt,raise=1mm},decorate]
(-3,0) -- (-3,7) node [midway,left,xshift=-1mm] {\color{teal}$\ell(w_0)-\ell(y)\,\,$};
\draw[teal, thin, decoration={brace,amplitude=4pt,raise=1mm},decorate]
(0,3) -- (3,3) node [midway,yshift=5mm] {\color{teal}$\ell(y)-\ell(x)$}; 
\draw[teal, thin, decoration={brace,amplitude=4pt,raise=1mm},decorate]
(0,7) -- (7,7) node [midway,yshift=5mm] {\color{teal}$\ell(y)-\ell(w_0)$}; 
%\draw[gray, thin,  ->] (-1.5,0) -- (1.5,0) node[anchor=west] {\tiny $x$};
%\draw[gray, thin,  ->] (0,-1.5) -- (0,1.5) node[anchor=south] {\tiny $y$};
%\draw[black, thick,  ->] (0,0) -- (1,-0.5) node[anchor=north west] {\tiny \color{teal} $v$};
%\draw[black, thick,  ->] (0,0) -- (0.5,1) node[anchor=south west] {\tiny \color{cyan} $R_{\varphi}(v)$};
%\draw[violet, dashed,domain=-27:63, ->] plot ({0.7*cos(\x)}, {0.7*sin(\x)});
%\draw (0.5,0.5) -- (0.5,0.5) node[anchor=west] {\tiny \color{violet} $\varphi$};
\end{tikzpicture}
}
\caption{The homomorphisms from $\Delta_x$ to $\T(\Delta_y)$, and thus the extensions from $\Delta_x$ to $\Delta_y$, are in the violet region; the composition factors in $\T(\Delta_y)$ are in the grey region.}\label{fig4.2-1.5}
\end{figure}

\begin{proof}
The claims are exactly the claims in Corollary~\ref{cor.shiftboundtilde} via the change of variables.
\end{proof}

The bound given in Corollary~\ref{cor.klbound} does not take into account the differentials in the complex of homomorphisms between $\Delta_y$ and $\T(\Delta_x))$ and can be lowered in various ways. 
We record one such strenghtening of the bound.

\begin{proposition}\label{prop4.6-2}
Let $x,y\in W$ be such that $x\geq y$. Then, for $k\in\mathbb{Z}_{\geq 0}$ 
and $i\in\mathbb{Z}$ such that $2k-i\neq \ell(x)-\ell(y)$, we have:
\[E(x,y,i,0,k)\leq 
\sum_{w\in W}p^{(k)}_{w_0y^{-1}w_0,w_0w^{-1}w_0}
p_{w_0x,w_0w}^{(k-i)}-
\max_{\ell(w)=\ell(y)+k, w\geq y}p_{w_0x,w_0w}^{(k-i)}.
\]
\end{proposition}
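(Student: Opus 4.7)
The plan is to sharpen Corollary~\ref{cor.klbound} by taking into account a specific non-vanishing component of the differential in the hom-complex computing the relevant extension. In the set-up of Corollary~\ref{cor.klbound}, the refined inequality is
\[
\operatorname{ext}^a(\Delta_y\langle b-a\rangle,\Delta_x) \le \dim\hom(\Delta_y\langle b-a\rangle,\mathcal T_a(\Delta_x)) - \dim\operatorname{im} d^{*}_{a-1},
\]
where $d^{*}_{a-1}$ is induced on hom-spaces by $\mathcal T_{a-1}(\Delta_x)\to\mathcal T_a(\Delta_x)$. After applying a symmetry from Proposition~\ref{prop4.1-1} to convert this into a bound on $E(x,y,i,0,k)$, the naive first term unfolds via \eqref{TinT}, \eqref{DinT}, and the inversion and $w_0$-conjugation symmetries of the Kazhdan--Lusztig polynomials to the polynomial sum $\sum_w p^{(k)}_{w_0y^{-1}w_0,w_0w^{-1}w_0}p^{(k-i)}_{w_0x,w_0w}$ appearing in the statement.

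Fix $w_*\in W$ with $\ell(w_*)=\ell(y)+k$ and $w_*\ge y$ realising the stated maximum. Since $\deg p_{y,w_*}=\ell(w_*)-\ell(y)=k$, the leading coefficient $p^{(k)}_{y,w_*}$ equals $1$, so the tilting summand $T_{w_*}\langle k\rangle$ appears with multiplicity exactly one in the relevant $k$-th term of the linear tilting coresolution, and contributes precisely $p^{(k-i)}_{w_0x,w_0w_*}$ dimensions to the naive bound. The task reduces to producing a $p^{(k-i)}_{w_0x,w_0w_*}$-dimensional subspace of $\operatorname{im} d^{*}_{k-1}$ concentrated along this summand; maximising over $w_*$ then yields the claim.

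To exhibit such a subspace, pick $w'\lessdot w_*$ with $w'\ge y$ and $\ell(w')=\ell(w_*)-1$; analogously $p^{(k-1)}_{y,w'}=1$, so $T_{w'}\langle k-1\rangle$ is a multiplicity-one summand of the $(k-1)$-st term of the coresolution. The non-zero component $T_{w'}\langle k-1\rangle\to T_{w_*}\langle k\rangle$ of the differential (whose existence is forced by the acyclicity of $\mathcal T_\bullet$ in positive cohomological degrees) then induces a map
\[
\hom(\Delta_x\langle i\rangle,T_{w'}\langle k-1\rangle)\longrightarrow\hom(\Delta_x\langle i\rangle,T_{w_*}\langle k\rangle),
\]
and I would show that this map is surjective, which gives the required bound $\dim\operatorname{im} d^{*}_{k-1}\ge p^{(k-i)}_{w_0x,w_0w_*}$.

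This surjectivity is the main obstacle, and the hypothesis $2k-i\neq\ell(x)-\ell(y)$ enters precisely here. In the excluded boundary case $2k-i=\ell(x)-\ell(y)$, the coefficient $p^{(k-i)}_{w_0x,w_0w_*}$ is the leading ($=1$) coefficient of a Kazhdan--Lusztig polynomial, corresponding to the unique Verma inclusion surviving in $\operatorname{Ext}$; no differential can cancel it. Off the corner, I would establish the surjectivity by passing to the Koszul--Ringel-dual picture, in which the coresolution is replaced by the graded Verma module $\Delta_{w_0y^{-1}w_0}$, the summands $T_{w_*}\langle k\rangle$ and $T_{w'}\langle k-1\rangle$ by composition factors $L_{w_0w_*^{-1}w_0}$ and $L_{w_0w'^{-1}w_0}$ at internal degrees $-k$ and $-(k-1)$ respectively, and the problem reduces to a statement about the radical filtration of this Verma module that I expect to prove using the multiplicity information from Proposition~\ref{klkl}.
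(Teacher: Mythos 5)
You attack the cohomology from the opposite end of the differential to the paper. Both routes are valid in principle: the paper bounds $\operatorname{rank}d^*_k$ below, while you bound $\operatorname{rank}d^*_{k-1}$ below, and either rank, once established, subtracts from the naive bound of Corollary~\ref{cor.klbound}. Your preliminary reduction via Proposition~\ref{klkl} and the symmetries of Proposition~\ref{prop4.1-1}, and your observation that $T_{w_*}\langle k\rangle$ occurs with multiplicity exactly one when $\ell(w_*)=\ell(y)+k$ and $w_*\ge y$, are correct. The gap is in how the rank estimate is obtained.

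The paper's step is very short and self-contained: Koszul--Ringel duality turns the Verma inclusion $\Delta_{w_0w^{-1}w_0}\langle k\rangle\hookrightarrow\Delta_{w_0y^{-1}w_0}$ into a termwise inclusion of complexes $\mathcal T_\bullet(\Delta_w)\langle -k\rangle[k]\hookrightarrow\mathcal T_\bullet(\Delta_y)$; then any $\phi\colon\Delta_x\langle i\rangle\to T_w$ with $d\circ\phi=0$ would factor through $\ker d=\Delta_w$, and the single-degree property of $\hom$ between Verma modules (Subsection~\ref{ss.Vermainclusion}), together with the hypothesis $2k-i\neq\ell(x)-\ell(y)$, makes that impossible. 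Thus $d^*_k$ is injective on the whole summand $\hom(\Delta_x\langle i\rangle,T_w\langle -k\rangle)$, which is exactly what is needed. The hypothesis is consumed in one identifiable sentence.

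Your argument instead selects a single cover $w'\lessdot w_*$ with $w'\ge y$ and asserts that the differential component $T_{w'}\langle k-1\rangle\to T_{w_*}\langle k\rangle$ induces a \emph{surjection} on $\hom$-spaces from $\Delta_x\langle i\rangle$. This is the unsupported step, and I see three problems with it. First, a nonzero component between these two specified multiplicity-one summands is not ``forced by acyclicity'': acyclicity forces only that $T_{w_*}\langle k\rangle$ receives a nonzero component of $d_{k-1}$ from \emph{some} summand of $\mathcal T_{k-1}(\Delta_y)$, not from your chosen $T_{w'}\langle k-1\rangle$. Second, surjectivity would already require $\dim\hom(\Delta_x\langle i\rangle,T_{w'}\langle k-1\rangle)\ge\dim\hom(\Delta_x\langle i\rangle,T_{w_*}\langle k\rangle)$, i.e.\ a monotonicity of KL-polynomial coefficients along a single Bruhat cover, which is not available in general. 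Third, and decisively, the hypothesis $2k-i\neq\ell(x)-\ell(y)$ never actually enters your argument: your remark that the excluded value marks the leading KL coefficient explains why the boundary must be excluded, but it is not a mechanism that produces the surjection off the boundary, and the closing appeal to ``a statement about the radical filtration \ldots\ that I expect to prove'' is a placeholder rather than a proof. The paper's kernel-side argument avoids all of this because the factorisation through $\Delta_w$ is exactly where the degree restriction bites.
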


\begin{proof}
Let $w\in W$ such that $w\geq y$ and $\ell(w)=\ell(y)+k$.
Via Koszul-Ringel duality, the inclusion $\Delta_{w_0w^{-1}w_0}\langle k\rangle\hookrightarrow
\Delta_{w_0y^{-1}w_0}$
gives an injective (component-wise) homomorphism of complexes 
$\mathcal{T}_\bullet(\Delta_w)\langle -k\rangle[k]\hookrightarrow 
\mathcal{T}_\bullet(\Delta_y)$.

Since each hom space between Verma modules
is concentrated in one degree, %none of the additional homomorphism from $\Delta_x$ to
%the summand $T_{w}$ of $\mathcal{T}_\bullet(\Delta_w)$ contributes to an extension
%from $\Delta_x$ to $\Delta_w$. In other words, the 
any nonzero homomorphism, say $\phi:\Delta_x\langle i \rangle \to T_w=\mathcal T_0(\Delta_w)$ does not give rise to a homomorphism of complexes, that is,
$d \circ \phi\neq 0$, where $d$ is (the restriction of) the differential in $\mathcal{T}_\bullet(\Delta_w)$.

Since $\mathcal{T}_\bullet(\Delta_w)\langle -k\rangle[k]$ is a subcomplex of 
$\mathcal{T}_\bullet(\Delta_y)$, we still have $d\circ \phi\neq 0$ when $d$ is the differential in 
$\mathcal{T}_\bullet(\Delta_y)$. Hence $\phi$ does not 
contribute to an (appropriately shifted) extension from $\Delta_x$
to $\Delta_y$. The claim follows.
\end{proof}

\subsection{The exact information given by the $R$-polynomials}\label{ss.r'poly}
Theorem~\ref{thm3.2-1} implies that the coefficient $r_{x,y}^{(i)}$ of the
$R$-polynomial $r_{x,y}$ has the following interpretation in terms of 
the violet triangle in Figure~\ref{fig4.2-1.5}. We need to consider the 
integral points in the intersection of the line $b=i$ with the violet triangle
(the dashed parts excluded) as given by the small black boxes in the
following picture:

\begin{center}
\resizebox{2.5cm}{!}{
\begin{tikzpicture}
\shade[left color = violet!25,right color = violet!20] (0,-3) -- (0,0) -- (3,3) -- cycle;
\draw[violet, thick,  -] (0,-3) -- (3,3);
\draw[violet, dashed, very thick,  -] (0,0) -- (0,-3);
\draw[violet, dashed, very thick,  -] (3,3) -- (0,0);
\draw [fill=violet] (-0.07,-3.07) rectangle +(0.14,0.14);
\draw [fill=violet] (2.93,2.93) rectangle +(0.14,0.14);
\draw[ thick,  -] (-1,-0.5) -- (2,-0.5) node[anchor=west] {$b=i$};
\draw [fill=black] (1.18,-0.57) rectangle +(0.14,0.14);
\draw [fill=black] (0.77,-0.57) rectangle +(0.14,0.14);
\draw [fill=black] (0.36,-0.57) rectangle +(0.14,0.14);
\end{tikzpicture}
}
\end{center}

The coefficient $r_{x,y}^{(i)}$ is exactly the alternating sum of the
dimensions of those extensions from (a shift of) $\Delta_x$ to $\Delta_y$,
where the top of $\Delta_x$ is shifted by the coordinates of these small black boxes.
Note that the dashed sides are excluded, except for the south and the east vertices
of the triangle.

\subsection{Consequences}\label{s4.4}

\begin{corollary}\label{cor4.4-1}
Let $\ell(x)-\ell(y)\equiv i\mod 2$.
If the intersection of the line $b=i$ with the violet triangle
contains exactly one integral point, say $(-a,b)$, then
\begin{equation}\label{eq4.4-5}
a=\frac{i+\ell(x)-\ell(y)}{2}\quad\text{ and }\quad
\dim\mathrm{ext}^{a}(\Delta_x\langle i\rangle,\Delta_y)=|r_{x,y}^{(i)}|. 
\end{equation}
\end{corollary}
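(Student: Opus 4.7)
The plan is to combine Theorem~\ref{thm3.2-1} (the graded Delorme formula) with Corollary~\ref{cor.shiftbound} (the support bound). The Delorme formula, applied at the prescribed $i$, reads
\[
\sum_{k \geq 0}(-1)^{k}\dim\mathrm{ext}^{k}(\Delta_{x}\langle i\rangle, \Delta_{y}) \;=\; r_{x,y}^{(i)},
\]
and by Corollary~\ref{cor.shiftbound} the $k$-th summand vanishes unless the associated point $(-k, i)$ lies in the violet triangle of Figure~\ref{fig4.2-1.5}. Under the uniqueness hypothesis, the only $k$ that can contribute is $k = a$, so the alternating sum collapses to $(-1)^{a}\dim\mathrm{ext}^{a}(\Delta_{x}\langle i\rangle, \Delta_{y}) = r_{x,y}^{(i)}$, and taking absolute values yields the second formula in~\eqref{eq4.4-5}.

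It remains to identify $a$ explicitly, which is a short geometric computation. As explained in Subsection~\ref{ss.r'poly}, among the three sides of the violet triangle only the hypotenuse (together with its two endpoints, the south and east vertices) is solid; the other two sides are dashed. This hypotenuse runs from the south vertex $(0, \ell(y)-\ell(x))$ to the east vertex with slope $2$, so intersecting it with the line $b = i$ yields $i = 2a + \ell(y) - \ell(x)$, which solves to $a = (i + \ell(x) - \ell(y))/2$. The parity hypothesis $\ell(x) - \ell(y) \equiv i \pmod{2}$ ensures that the right-hand side is a nonnegative integer, completing the identification.

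No substantive obstacle is expected: the whole argument is a direct combination of the two cited results with a geometric reading of the triangle. The only care required is to keep track of which boundary segments of the triangle are included and which are excluded, which is precisely the bookkeeping already done in Subsection~\ref{ss.r'poly}.
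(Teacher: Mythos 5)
Your proof is correct and takes essentially the same route as the paper, which simply refers to the discussion in Subsection~\ref{ss.r'poly} (that discussion being the observation that Theorem~\ref{thm3.2-1} together with Corollary~\ref{cor.shiftbound} identifies $r_{x,y}^{(i)}$ as the alternating sum over the black-box points on the line $b=i$). You spell out the same argument in more detail, in particular the step identifying $a$ from the slope of the solid hypotenuse and the parity hypothesis; it would be worth stating explicitly that the parity hypothesis guarantees that the hypotenuse endpoint itself is a lattice point of the (non-dashed) intersection, which is the reason the single allowed lattice point must sit on the hypotenuse rather than in the interior.
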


\begin{proof}
This follows directly from the discussion in Subsection~\ref{ss.r'poly}. 
\end{proof}

\begin{corollary}\label{cor4.4-2}
Let $x,y\in W$ with $x\geq y$. Then Formula~\eqref{eq4.4-5} holds 
for the following values of $i$:
\begin{displaymath}
i\in\{\ell(x)-\ell(y),\ell(x)-\ell(y)-2,2-\ell(x)+\ell(y),\ell(y)-\ell(x)\}. 
\end{displaymath}
\end{corollary}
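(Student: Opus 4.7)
The plan is to apply Corollary~\ref{cor4.4-1} to each of the four values of $i$ in the list. By that corollary, it suffices to check, for each $i$, that the horizontal line $b=i$ meets the violet triangle of Figure~\ref{fig4.2-1.5} in exactly one integer point, treating the dashed sides as excluded except for the south and east vertices. The parity constraint $\ell(x)-\ell(y)\equiv i\pmod 2$ required by Corollary~\ref{cor4.4-1} is manifestly satisfied by each of the four listed values.

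For $i=\ell(x)-\ell(y)$ and $i=\ell(y)-\ell(x)$ this is immediate: the line $b=i$ touches the triangle only at the east vertex or the south vertex respectively, each an included integer point and the unique element of the intersection. Corollary~\ref{cor4.4-1} then yields Formula~\eqref{eq4.4-5} in these two cases.

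For the two near-extreme values $i=\ell(x)-\ell(y)-2$ and $i=\ell(y)-\ell(x)+2$, the intersection with the triangle is a short horizontal segment containing exactly two integer points. I plan to verify by inspection that, in the first case, one of these integer points lies on the (excluded) dashed top-right side of the triangle and the other on the (included) solid bottom side, while in the second case, one integer point sits on the (excluded) dashed left side and the other on the (included) solid bottom side. In both cases only one integer point is admissible, so Corollary~\ref{cor4.4-1} again applies.

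The principal, and essentially only, step requiring attention is this bookkeeping of which boundary integer points belong to the included solid side and which to the excluded dashed sides. This reduces to a straightforward inspection of Figure~\ref{fig4.2-1.5} and presents no serious obstacle.
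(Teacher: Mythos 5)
Your proposal is correct and follows essentially the same route as the paper: for the two extreme values of $i$ the intersection is a single included vertex, while for the two near-extreme values the intersection contains exactly two integer points, one on the excluded dashed boundary and one on the included solid hypotenuse, so that Corollary~\ref{cor4.4-1} applies in all four cases. The paper simply writes out the explicit coordinates of the two candidate points in the near-extreme cases where you defer to ``inspection,'' but this is a cosmetic difference.
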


\begin{proof}
If $i=\ell(x)-\ell(y)$, then the intersection of $b=i$ with the violet
triangle consists of the east vertex of the triangle.
If $i=\ell(y)-\ell(x)$, then the intersection of $b=i$ with the violet
triangle consists of the south vertex of the triangle.
If $i=\ell(x)-\ell(y)-2$, then the intersection of $b=i$ with the violet
triangle consists of the vertcies $(\ell(y)-\ell(x)+1,\ell(x)-\ell(y)-2)$
and $(\ell(y)-\ell(x)+2,\ell(x)-\ell(y)-2)$, however, the latter one belongs
to the dashed line. Similarly, if $i=\ell(y)-\ell(x)+2$, then the 
intersection of $b=i$ with the violet triangle consists of the vertcies 
$(-1,\ell(y)-\ell(x)+2)$
and $(0,\ell(y)-\ell(x)+2)$, however, the latter one belongs
to the dashed line.

This means that, in all four cases, we have exactly one relevant integral point.
Now the claim follows from Corollary~\ref{cor4.4-1}.
\end{proof}

\begin{corollary}\label{cor4.4-3}
Let $x,y\in W$ with $x\geq y$. If $\ell(x)-\ell(y)\leq 3$, then
Formula~\eqref{eq4.4-5} holds.
\end{corollary}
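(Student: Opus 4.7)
The plan is to reduce the statement directly to Corollary~\ref{cor4.4-2} by a short case analysis on $i$. By Corollary~\ref{cor.shiftbound}\eqref{prop4.2-1.5}, the formula is trivial unless $i\equiv \ell(x)-\ell(y)\pmod 2$; and by parts \eqref{prop4.2-1.2}, \eqref{prop4.2-1.3}, \eqref{prop4.2-1.4} of the same corollary, both sides of Formula~\eqref{eq4.4-5} vanish unless $\ell(y)-\ell(x)\le i\le \ell(x)-\ell(y)$ (indeed, the left side vanishes by the cited properties, and the right side vanishes because, by Theorem~\ref{thm3.2-1}, $r_{x,y}^{(i)}$ is an alternating sum of dimensions each of which is forced to vanish).

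It therefore suffices to verify the formula in the finitely many remaining cases where $i$ has the right parity and lies in $[\ell(y)-\ell(x),\ell(x)-\ell(y)]$. Writing $d:=\ell(x)-\ell(y)\in\{0,1,2,3\}$, the admissible values of $i$ form the arithmetic progression $\{-d,-d+2,\dots,d-2,d\}$, which has at most four elements whenever $d\le 3$. Explicitly:

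\begin{itemize}
\item if $d=0$, then $i=0=\ell(x)-\ell(y)$;
\item if $d=1$, then $i\in\{-1,1\}=\{\ell(y)-\ell(x),\ell(x)-\ell(y)\}$;
\item if $d=2$, then $i\in\{-2,0,2\}=\{\ell(y)-\ell(x),\,\ell(y)-\ell(x)+2,\,\ell(x)-\ell(y)\}$, noting that $\ell(y)-\ell(x)+2=\ell(x)-\ell(y)-2=0$;
\item if $d=3$, then $i\in\{-3,-1,1,3\}=\{\ell(y)-\ell(x),\,\ell(y)-\ell(x)+2,\,\ell(x)-\ell(y)-2,\,\ell(x)-\ell(y)\}$.
\end{itemize}

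In each case every admissible $i$ belongs to the list of four values covered by Corollary~\ref{cor4.4-2}, so Formula~\eqref{eq4.4-5} holds. There is no genuine obstacle here beyond the bookkeeping: the content is entirely packaged in Corollary~\ref{cor4.4-2} (which treats precisely the two pairs of horizontal lines adjacent to the south and east vertices of the violet triangle), and the role of the hypothesis $d\le 3$ is simply to ensure that these four lines exhaust all possibilities of the correct parity inside the triangle.
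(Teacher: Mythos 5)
Your proof is correct and follows essentially the same route as the paper: the paper's proof simply observes that under $\ell(x)-\ell(y)\leq 3$ the four values of $i$ listed in Corollary~\ref{cor4.4-2} exhaust all possibilities for potentially non-zero extensions, and you spell this out with the parity reduction and the explicit case analysis on $d=\ell(x)-\ell(y)$. One small inaccuracy in the bookkeeping: the two-sided bound $\ell(y)-\ell(x)\leq i\leq\ell(x)-\ell(y)$ on the vanishing of the left side of Formula~\eqref{eq4.4-5} is most directly a consequence of Corollary~\ref{cor.shiftbound}\eqref{prop4.2-1.1} combined with the relation $a=\tfrac{i+\ell(x)-\ell(y)}{2}$ (nonnegativity of $a$ forces the lower bound, $a\leq\ell(x)-\ell(y)$ forces the upper bound), rather than of parts \eqref{prop4.2-1.2}, \eqref{prop4.2-1.3}, \eqref{prop4.2-1.4} as you cite; this is a citation slip and not a gap in the argument.
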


\begin{proof}
The claim of the corollary 
follows directly from Corollary~\ref{cor4.4-2} since, under the assumption
$\ell(x)-\ell(y)\leq 3$, the values of $i$ listed in Corollary~\ref{cor4.4-2}
cover all possibilities for potentially non-zero extensions.
\end{proof}

\subsection{Expected vs additional extensions}\label{s4.5}

As illustrated in Figure~\ref{fig4.2-1.5}, \hk{the proof of Proposition~\ref{klkl}} %the arguments from the proof of
%Proposition~\ref{prop4.2-1} 
imply that the non-zero $E(x,y,i,0,k)$ split naturally
into two types:
\begin{itemize}
\item The cases when $k=\frac{i+\ell(x)-\ell(y)}{2}$, i.e., the top of 
$\Delta_x\langle i\rangle$ lies on the solid violet side of the violet triangle.
We call such cases {\em expected}.
\item All other cases. In these cases the top of $\Delta_x\langle i\rangle$ 
belongs to the interior of the violet triangle. We call such cases {\em additional}. 
\end{itemize}

\hk{Then the discussion in Subsection~\ref{ss.r'poly} gives the following statement.}
\begin{corollary}\label{cor4.5-3}
If $x,y\in W$ and $i\in\mathbb{Z}$ are such that all extensions 
between $\Delta_x\langle i\rangle$  and $\Delta_y$ are known to
be expected, then  Formula~\eqref{eq4.4-5} holds. 
\end{corollary}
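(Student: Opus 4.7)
The plan is to deduce this corollary immediately from the graded Delorme formula (Theorem~\ref{thm3.2-1}) combined with the definition of expected extensions in Subsection~\ref{s4.5}. By that definition, the only homological degree at which an expected extension $\mathrm{ext}^k(\Delta_x\langle i\rangle,\Delta_y)$ can live is the integer $a:=\frac{i+\ell(x)-\ell(y)}{2}$. The hypothesis of the corollary therefore says that $\mathrm{ext}^k(\Delta_x\langle i\rangle,\Delta_y)=0$ whenever $k\neq a$.

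With this vanishing at hand, the alternating sum appearing in Theorem~\ref{thm3.2-1} collapses to a single term, yielding
\begin{displaymath}
r_{x,y}^{(i)} \;=\; \sum_{k\geq 0}(-1)^k\dim\mathrm{ext}^k(\Delta_x\langle i\rangle,\Delta_y)
\;=\; (-1)^a\dim\mathrm{ext}^a(\Delta_x\langle i\rangle,\Delta_y),
\end{displaymath}
and taking absolute values gives exactly Formula~\eqref{eq4.4-5}. Pictorially, this is the same observation as in Corollary~\ref{cor4.4-1}: the solid violet side of the triangle in Figure~\ref{fig4.2-1.5} meets each horizontal line $b=i$ in at most one lattice point, and under the hypothesis only that single lattice point contributes to the alternating-sum interpretation of $r_{x,y}^{(i)}$ given in Subsection~\ref{ss.r'poly}.

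There is essentially no obstacle. The only routine points to verify are that $a$ is a nonnegative integer whenever the hypothesis has any content. Integrality follows from the parity clause in Corollary~\ref{cor.shiftbound}\eqref{prop4.2-1.5}, which forces $\ell(y)-\ell(x)-i$ to be even as soon as a nonzero extension exists; and the bound $0\leq a\leq \ell(x)-\ell(y)$ is guaranteed by Corollary~\ref{cor.shiftbound}\eqref{prop4.2-1.1}. If instead $\mathrm{ext}^k(\Delta_x\langle i\rangle,\Delta_y)$ vanishes for every $k$, then Theorem~\ref{thm3.2-1} forces $r_{x,y}^{(i)}=0$ and Formula~\eqref{eq4.4-5} holds trivially.
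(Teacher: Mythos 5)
Your proof is correct and follows essentially the same route the paper takes: the paper simply points to the discussion in Subsection~\ref{ss.r'poly}, which is the observation that $r_{x,y}^{(i)}$ is the alternating sum of $\dim\mathrm{ext}^k(\Delta_x\langle i\rangle,\Delta_y)$ from Theorem~\ref{thm3.2-1}, so when only the expected homological degree $a$ contributes the sum collapses to $(-1)^a\dim\mathrm{ext}^a(\Delta_x\langle i\rangle,\Delta_y)$. Your added checks that $a$ is a nonnegative integer via Corollary~\ref{cor.shiftbound} are sound, though the paper leaves them implicit.
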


Each non-zero element in $\mathrm{ext}^k(\Delta_x\langle i\rangle,\Delta_y)$ is realized
via a non-zero homomorphism from $\Delta_x\langle i\rangle$ to 
$\mathcal{T}_{k}(\Delta_y)$. Since the latter has a standard filtration and all
non-zero morphisms between standard modules are injective, the map from 
$\Delta_x\langle i\rangle$ to $\mathcal{T}_{k}(\Delta_y)$ must be injective,
in particular, it must be injective when restricted to the simple socle of
$\Delta_x\langle i\rangle$.

For expected extensions, the socle of $\Delta_x\langle i\rangle$ is on the diagonal side
of the gray triangle. This means that the homomorphism from $\Delta_x\langle i\rangle$
to $\mathcal{T}_{k}(\Delta_y)$ which realizes this extension has image inside 
the direct sum of all $T_w\langle k\rangle$, where the sum is taken over all
$w$ such that $w\geq y$ and $\ell(w)=\ell(y)+k$.

For additional extensions, the socle of $\Delta_x\langle i\rangle$ is in the interior
of the gray triangle. This means that the homomorphism from $\Delta_x\langle i\rangle$
to $\mathcal{T}_{k}(\Delta_y)$ which realizes this extension 
\begin{itemize}
\item either induces a non-zero map to a summand of  $\mathcal{T}_{k}(\Delta_y)$
different from the $T_w\langle k\rangle$ as in the previous paragraph;
\item or induces a non-zero map to some $T_w\langle k\rangle$ as in the previous paragraph,
in which case the socle of this $T_w\langle k\rangle$ is not simple and this induced
map maps the socle of $\Delta_x\langle i\rangle$ to the socle part of 
$T_w\langle k\rangle$ which lives in a non-maximal degree.
\end{itemize}
Each of thess cases is only possible in the situation when some KL-polynomials
are nontrivial. 

The first situation is possible only if $\mathcal{T}_{k}(\Delta_y)$ contains 
a summand $T_w\langle k\rangle$, for some $w\geq y$ which violates $\ell(w)=\ell(y)+k$.
Via the Koszul-Ringel duality, the fact that $T_w\langle k\rangle$ is a sumand
of $\mathcal{T}_{k}(\Delta_y)$ means that
$[\Delta_{w_0y^{-1}w_0}:L_{w_0w^{-1}w_0}\langle -k\rangle]\neq 0$. The latter multiplicity
is exactly the coefficient at $v^k$ in the KL polynomial $p_{w_0y^{-1}w_0,w_0w^{-1}w_0}$.
Since $\ell(w)\neq \ell(y)+k$, this means that $p_{w_0y^{-1}w_0,w_0w^{-1}w_0}$ is not trivial.

The second situation is possible only if the socle of some $T_w\langle k\rangle$, where
$w\geq y$ and $\ell(w)=\ell(y)+k$, is not simple. \hk{Since the socle is a direct sum of $L_{w_0}$, the latter is if and only if $(T_w:\Delta_{w_0})>1$, which is equivalent, via Koszul duality, to the polynomial $p_{e,w_0w}$ being nontrivial.}

An immediate corollary of this discussion is the following:

\begin{corollary}\label{cor4.5-1}
Formula~\eqref{eq4.4-5} holds in all cases when $W$ has rank $2$.
\end{corollary}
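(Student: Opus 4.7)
The plan is to reduce the statement to the observation that in rank $2$ the entire phenomenon of \emph{additional} extensions, as classified in Subsection~\ref{s4.5}, cannot arise. Once we know that every non-zero $E(x,y,i,0,k)$ is expected, Corollary~\ref{cor4.5-3} immediately delivers Formula~\eqref{eq4.4-5}, and the corollary follows.

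First, I would record the relevant input from Kazhdan--Lusztig combinatorics. A Weyl group of rank $2$ is one of $A_1\times A_1$, $A_2$, $B_2$ or $G_2$; in each of these cases, the Kazhdan--Lusztig polynomials are trivial, meaning $p_{x,y}=1$ whenever $x\leq y$ and $p_{x,y}=0$ otherwise. For the dihedral cases this is classical (the KL recursion never produces a $\mu$-value exceeding one), and for $A_1\times A_1$ the Weyl group is a diamond in Bruhat order and the computation is immediate.

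Next, I would walk through the two mechanisms that, according to the discussion preceding the corollary, are the only possible sources of an additional extension. The first mechanism requires $\mathcal{T}_{k}(\Delta_y)$ to contain a summand of the form $T_w\langle k\rangle$ with $w\geq y$ but $\ell(w)\neq \ell(y)+k$; via the Koszul--Ringel identification recalled there, this multiplicity is the coefficient at $v^k$ of $p_{w_0y^{-1}w_0,w_0w^{-1}w_0}$. Under the triviality of KL polynomials this coefficient can be non-zero only in the single degree $k=\ell(w_0w^{-1}w_0)-\ell(w_0y^{-1}w_0)=\ell(w)-\ell(y)$, which forces $\ell(w)=\ell(y)+k$ and therefore excludes this mechanism. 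The second mechanism requires the socle of some $T_w\langle k\rangle$ (with $w\geq y$ and $\ell(w)=\ell(y)+k$) to be non-simple, which the paper identifies with $p_{e,w_0w}$ being non-trivial; again excluded by the same input.

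The main obstacle is simply being confident that the enumeration in Subsection~\ref{s4.5} truly exhausts every way in which an extension can fail to be expected. Granted that, both checks together say that for rank $2$ every non-zero graded extension between Verma modules is expected, so Corollary~\ref{cor4.5-3} yields Formula~\eqref{eq4.4-5} for all $x,y\in W$ and all $i\in\mathbb{Z}$, completing the proof.
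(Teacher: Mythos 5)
Your proposal is correct and follows the paper's own argument: both use the fact that in rank $2$ all Kazhdan--Lusztig polynomials are trivial to conclude, via the dichotomy laid out in Subsection~\ref{s4.5}, that every non-zero extension between Verma modules is expected, after which Formula~\eqref{eq4.4-5} is immediate (the paper cites Theorem~\ref{thm3.2-1} directly, you route through Corollary~\ref{cor4.5-3}, but these amount to the same appeal to the graded Delorme formula). Your version is somewhat more explicit in unpacking the two mechanisms that could generate additional extensions, which the paper leaves implicit behind a single ``therefore'', but the substance is identical.
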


\begin{proof}
In rank $2$ case, all KL polynomials are trivial. Therefore the only non-zero
extensions between Verma modules are those where the top of $\Delta_x\langle i\rangle$
is on the solid violet side of the violet triangle. Therefore Formula~\eqref{eq4.4-5}
follows from Theorem~\ref{thm3.2-1}.
\end{proof}

More generally, the same argument gives:

\begin{corollary}\label{cor4.5-2}
Let $y\in W$ be such that $p_{w_0y^{-1}w_0,w_0w^{-1}w_0}$
and $p_{e,w_0w}$ are trivial, for all $w\geq y$. Then 
Formula~\eqref{eq4.4-5} holds for all $x\in W$ such that $x\geq y$.
\end{corollary}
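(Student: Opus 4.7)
The proof is essentially a direct reading of the discussion preceding the corollary, so the plan is short. The goal is to show that under the stated triviality of the two families of Kazhdan-Lusztig polynomials, every non-zero extension from a shifted $\Delta_x\langle i\rangle$ into $\Delta_y$ is expected; once that is established, Corollary~\ref{cor4.5-3} immediately yields Formula~\eqref{eq4.4-5}.

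First I would recall, following the analysis in Subsection~\ref{s4.5}, that an additional non-zero extension in $\mathrm{ext}^k(\Delta_x\langle i\rangle,\Delta_y)$ is realized by a homomorphism $\Delta_x\langle i\rangle\to \mathcal{T}_k(\Delta_y)$ whose image forces one of exactly two phenomena to occur: either (i) the complex $\mathcal{T}_k(\Delta_y)$ has an indecomposable summand of the form $T_w\langle k\rangle$ for some $w\geq y$ with $\ell(w)\neq \ell(y)+k$, or (ii) some summand $T_w\langle k\rangle$ with $w\geq y$ and $\ell(w)=\ell(y)+k$ has a non-simple socle.

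Next I would translate each of (i) and (ii) into a Kazhdan-Lusztig statement using Koszul-Ringel duality and BGG reciprocity, exactly as in the paragraphs before the corollary. Case~(i) is equivalent to the existence of $w\geq y$ for which the coefficient of $v^k$ in $p_{w_0y^{-1}w_0,w_0w^{-1}w_0}$ is non-zero with $\ell(w)\neq \ell(y)+k$, which in particular means that polynomial is non-trivial. Case~(ii) is equivalent to $(T_w:\Delta_{w_0})>1$, which via Koszul duality is the non-triviality of $p_{e,w_0w}$ for the relevant $w\geq y$. Under the hypothesis of the corollary both of these are excluded for every $w\geq y$, so neither (i) nor (ii) can occur and therefore no additional extensions exist.

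Hence, for every $i\in\mathbb{Z}$, all non-zero extensions from $\Delta_x\langle i\rangle$ to $\Delta_y$ are expected, and Corollary~\ref{cor4.5-3} applies to give Formula~\eqref{eq4.4-5}. There is no real obstacle here: the entire content is the bookkeeping in Subsection~\ref{s4.5}, and the only thing to be careful about is that the hypothesis is imposed for \emph{all} $w\geq y$ (not only for $w$ satisfying a length condition), which is precisely what is needed to rule out case~(i) simultaneously for every homological degree $k$.
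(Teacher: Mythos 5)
Your argument is correct and follows the paper's own route: the paper explicitly derives Corollary~\ref{cor4.5-2} from the two-case analysis in Subsection~\ref{s4.5} (via the remark "More generally, the same argument gives:" right after Corollary~\ref{cor4.5-1}), and you have faithfully reconstructed that analysis. In particular, your identification of case~(i) with nontriviality of some $p_{w_0y^{-1}w_0,w_0w^{-1}w_0}$, of case~(ii) with nontriviality of some $p_{e,w_0w}$, and your final appeal to Corollary~\ref{cor4.5-3} (equivalently the graded Delorme formula) are exactly the intended steps.
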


\subsection{Koszulity}\label{sskoszul}

Denote by $\mathscr{D}$ 
the full subcategory of the derived category $\mathcal{D}(\mathcal{O}_0^\mathbb{Z})$ 
given by the objects $\Delta_w\langle i\rangle[j]$, where
$w\in W$ and $i,j\in\mathbb{Z}$ such that 
\begin{displaymath}
i\in\{-\ell(w)-2j,-\ell(w)-2j-1\}. 
\end{displaymath}
Note that the shift put the socle of each $\Delta_w\langle i\rangle[j]$ in the ``generating diagonal'' $i\in\{-2j,-2j+1\}$. 
The group $\mathbb{Z}$  acts freely on $\mathscr{D}$ by sending $\Delta_w\langle i\rangle[j]$ to
$\Delta_w\langle i-2m\rangle[j+m]$, for $m\in\mathbb{Z}$.

Consider the category $\mathscr{D}$-mod of finite dimensional $\mathscr{D}$-modules.
The objects of this category are $\mathbb{C}$-linear functors $\mathrm{M}$
from $\mathscr{D}$ to vector spaces (over $\mathbb{C}$) such that the sum, over all 
$\mathtt{i}\in \mathscr{D}$, of the dimensions of $\mathrm{M}(\mathtt{i})$ is finite.
The morphisms in $\mathscr{D}$-mod are natural transformations of functors.

The following results generalizes \cite[Theorem~5.1]{DM} (see also  Theorem~\ref{thm7.8}).  Theorem~\ref{thm6.3-1} below shows a case that is covered by Theorem~\ref{thm7.8} but not by \cite[Theorem~5.1]{DM}.

\begin{theorem}\label{thmKoszul}
Assume that all extensions between the Verma 
modules in $\mathcal{O}_0$ are expected. Then the following assertions hold:
\begin{enumerate}[$($a$)$]
\item\label{thm7.8.1} We have an equivalence
$\mathcal{D}^*(\mathcal{O}_0^\mathbb{Z})\cong \mathcal{D}^*(\mathscr{D}\text{-}\mathrm{mod})$ where $*\in\{b,\uparrow,\downarrow\}$.
\item\label{thm7.8.2} The path algebra of $\mathscr{D}$ is Koszul and is Koszul self-dual.
\end{enumerate}
\end{theorem}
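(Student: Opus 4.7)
The plan is to first establish (b) and then derive (a) as a consequence. The central observation is that the expected-extensions hypothesis forces $\mathrm{ext}^k(\Delta_x\langle i\rangle, \Delta_y)$ to vanish unless $2k = i + \ell(x) - \ell(y)$. Translated into the parametrization of $\mathscr{D}$, this means the Hom space in $\mathcal{D}^b(\mathcal{O}_0^\mathbb{Z})$ between two objects $\Delta_x\langle i_x\rangle[j_x]$ and $\Delta_y\langle i_y\rangle[j_y]$ of $\mathscr{D}$ is nonzero only when $i_x + 2j_x + \ell(x) = i_y + 2j_y + \ell(y)$. Since this quantity takes only the values $0$ or $-1$ for objects of $\mathscr{D}$, and the free $\mathbb{Z}$-action $\langle -2\rangle[1]$ preserves it, the path algebra $A$ of $\mathscr{D}$ carries a natural $\mathbb{Z}$-grading whose degree-$0$ part consists of morphisms within each parity class and whose degree-$1$ part consists of morphisms between the two parities.

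For (b), I would verify that $A$ is Koszul by checking three conditions: $A_0$ is semisimple, $A$ is generated in degree $1$, and the relations are quadratic. Semisimplicity of $A_0$ follows immediately from the hom-formula in Subsection~\ref{ss.Vermainclusion}, since within a single parity class the only nonzero Homs are identity morphisms. Generation in degree $1$ would follow by induction on the Bruhat distance $\ell(x) - \ell(y)$, using the canonical short exact sequences $0 \to \Delta_{ws}\langle -1\rangle \to \theta_s \Delta_w \to \Delta_w \to 0$ (for $ws > w$) to factor every extension through a chain of atomic degree-$1$ morphisms. Quadraticity of the relations would then be inherited from the Hecke relation $\underline{H}_s^2 = (v + v^{-1})\underline{H}_s$ via the categorification action of projective functors. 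Koszul self-duality follows from the Koszul-Ringel involution $\mathrm{RK}$ of Subsection~\ref{s2.6}: $\mathrm{RK}$ restricts to an involution on $\mathscr{D}$ which permutes Verma modules via $\Delta_w \mapsto \Delta_{w_0 w^{-1} w_0}$ and implements the grading swap $\langle j\rangle \leftrightarrow [j]\langle -j\rangle$, which is precisely the grading swap defining Koszul duality on $A$.

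For (a), the Verma modules with the natural shifts parametrizing $\mathscr{D}$ form a compact generating set of $\mathcal{D}^b(\mathcal{O}_0^\mathbb{Z})$, and the parity constraint from Corollary~\ref{cor.shiftboundtilde} guarantees that the shifts allowed in $\mathscr{D}$ exhaust all shifts where Homs between Verma modules in the derived category can be nonzero. With the Koszulity established in (b), a Rickard--Keller-type tilting argument yields $\mathcal{D}^b(\mathcal{O}_0^\mathbb{Z}) \cong \mathcal{D}^b(\mathscr{D}\text{-mod})$; the variants for $* \in \{\uparrow,\downarrow\}$ follow by the standard argument via homotopy categories of projective or injective resolutions compatible with filtration gradings.

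The main obstacle I anticipate is verifying that the relations in the path algebra are genuinely quadratic --- concentration on a single diagonal and generation in degree $1$ come relatively naturally from the hypothesis, but showing that relations are purely quadratic requires careful analysis. A natural route, parallel to the proof of Soergel's theorem on Koszulity of $\mathcal{O}_0$, is to use the linear tilting complexes $\mathcal{T}_\bullet(\Delta_w)$ of Subsection~\ref{s2.6}: under Koszul-Ringel duality, $\mathrm{Ext}^*_\mathcal{O}(\Delta_x, \Delta_y)$ is computed as homomorphisms from $\Delta_x$ into $\mathcal{T}_\bullet(\Delta_y)$, and the linearity of this complex under the expected-extensions hypothesis should yield the quadratic relations directly.
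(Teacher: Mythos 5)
Your proposal reverses the logical order of the paper (you do (b) then (a); the paper does (a) then (b)), and more importantly it contains a genuine gap in the argument for (b).

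The central problem is that the three conditions you propose to verify --- $A_0$ semisimple, generation in degree $1$, and quadratic relations --- characterize $A$ as a \emph{quadratic} algebra, not as a \emph{Koszul} algebra. Koszulity is strictly stronger: it requires, in addition, that the simple modules admit linear projective resolutions (equivalently, that the Ext algebra of $A_0$ over $A$ is generated in degree $1$). Many quadratic algebras fail to be Koszul. You acknowledge the quadraticity of relations as your ``main obstacle'' and gesture at the linear tilting complexes $\mathcal{T}_\bullet(\Delta_w)$ as a possible remedy, which is indeed the correct direction, but the plan as written would at best establish quadraticity. The paper sidesteps this entirely: it never checks the Koszulity conditions directly. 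Instead it first proves (a) using only that the no-self-extension hypothesis makes $\mathscr{D}$ a tilting subset (in the sense of the Rickard--Morita theorem, \cite[Theorem~2.1]{DM}), with no reference to Koszulity. It then observes that, under the resulting equivalence, simple $\mathscr{D}$-modules correspond to dual Verma modules, so the quadratic dual of $\mathscr{D}$ is realized by the dual Vermas with the analogous shifts, and the composite
\[\mathcal{D}^\uparrow(\mathscr{D}\text{-}\mathrm{mod})\to \mathcal{D}^\uparrow(\mathcal{O}_0^\mathbb{Z})\xrightarrow{\ {}^\star\ } \mathcal{D}^\downarrow(\mathcal{O}_0^\mathbb{Z})\to \mathcal{D}^\downarrow(\mathscr{D}\text{-}\mathrm{mod})\]
agrees with the Koszul duality functor. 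Both Koszulity and self-duality then drop out simultaneously from \cite[Theorem~30]{MOS}, which says precisely that if the Koszul duality functor is an equivalence the algebra is Koszul. Your separate argument for self-duality via $\mathrm{RK}$ is morally related, but it leaves Koszulity itself unestablished.

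A secondary issue: your derivation of (a) is conditioned on (b), but in fact (a) is \emph{easier} than (b) and should be done first. The hypothesis that all extensions are expected is precisely the statement that $\mathscr{D}$ has no self-extensions of nonzero degree, and since $\mathscr{D}$ generates $\mathcal{D}^b(\mathcal{O}_0^\mathbb{Z})$, the Rickard--Morita theorem immediately gives the derived equivalence in all three flavors $*\in\{b,\uparrow,\downarrow\}$ without any appeal to Koszulity. Making (a) depend on (b) both misorders the logic and builds in the gap from (b).
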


\begin{proof}
The proof essentially follows the proof of \cite[Theorem~5.1]{DM}. 

The assumption that 
all extensions between the Verma modules in $\mathcal{O}_0^{\mathbb{Z}}$ are expected says exactly 
that $\mathscr{D}$ has no self-extensions (of a nonzero degree). Since $\mathscr{D}$ generates $\mathcal{D}^b(\mathcal{O}_0^\mathbb{Z})$, the subcategory $\mathscr{D}$ gives rise to a tilting complex in the sense of Rickard (see \cite[Subsection~2.1]{DM}, where such $\mathscr{D}$ is called a tilting subset). Therefore,  Claim~\eqref{thm7.8.1}
follows from the Rickard-Morita Theorem (see \cite[Theorem~2.1]{DM}).

To show Claim~\eqref{thm7.8.2}, we note that the equivalence $\mathcal{D}^*(\mathcal{O}_0^\mathbb{Z})\to  \mathcal{D}^*(\mathscr{D}\text{-}\mathrm{mod})$ is given by $X \mapsto \Hom_{\mathcal D^{*}(\mathcal O_0^{\mathbb{Z}})}(-,X)$ where the latter functor is restricted to $\mathscr D$.
Since the Verma modules and the dual Verma modules are homologically orthogonal, simple objects in $\mathscr{D}$-mod correspond to dual Verma modules under the equivalence. 
From these we see that the quadratic dual of $\mathscr{D}$ consists of the dual Verma modules, with the similar shifts (the tops on the generating diagonal), and that the composition
\[ \mathcal{D}^\uparrow(\mathscr{D}\text{-}\mathrm{mod})\xrightarrow{\cong}\mathcal{D}^\uparrow(\mathcal{O}_0^\mathbb{Z})\xrightarrow[\cong]{-^*} \mathcal{D}^\downarrow(\mathcal{O}_0^\mathbb{Z})\xrightarrow{\cong}  \mathcal{D}^\downarrow(\mathscr{D}\text{-}\mathrm{mod}),\]
 where the first and the third functors are from \eqref{thm7.8.1} and the middle functor is the simple preserving duality, agrees with the Koszul duality functor.
Thus both the Koszulity and the Koszul self dualty follow from \cite[Theorem~30]{MOS}.
\end{proof}

\section{Combinatorics of Bruhat intervals}\label{s5}

\subsection{Equivalence classes of Bruhat intervals}\label{s5.1}

Denote by $\mathscr{I}$ the set of all pairs $(x,y)\in W^2$ such that $x\geq y$. 
Each such pair $(x,y)$ determines uniquely an interval in the Bruhat order,
denoted $[y,x]=\{z\in W\,:\, y\leq z\text{ and }z\leq x\}$. Let $\sim$
denote the minimal equivalence relations on $\mathscr{I}$ that contains all
$(x,y)\sim (xs,ys)$, where $s\in S$ is such that $\ell(x)>\ell(xs)$ and
$\ell(y)>\ell(ys)$, and all $(x,y)\sim (sx,sy)$, where $s\in S$ is such 
that $\ell(x)>\ell(sx)$ and $\ell(y)>\ell(sy)$.

\begin{proposition}\label{prop5.1-1}
If $(x,y)\sim(x',y')$, then $\exy=\e_{x',y'}$. %and $\tE_{x,y}=\tE_{x',y'}$.
\end{proposition}

\begin{proof}
In case $(x',y')=(sx,sy)$, for some $s\in S$  such 
that $\ell(x)>\ell(sx)$ and $\ell(y)>\ell(sy)$, we apply 
$\mathcal{L}\top_s$. It sends
$\Delta_{sx}$ to $\Delta_x$ and $\Delta_{sy}$ to $\Delta_y$.
On top of that, $\mathcal{L}\top_s$ is a derived equivalence and thus
induces the necessary isomorphisms between the extension spaces.
 
In case $(x',y')=(xs,ys)$, for some $s\in S$ such that $\ell(x)>\ell(xs)$ and
$\ell(y)>\ell(ys)$, we can apply the dirived equivalence 
$\mathcal{L}\mathsf{C}_s$ and argue similarly to the previous paragraph.
The claim follows. 
\end{proof}

Note that $(x,y)\sim(x',y')$ does not imply a poset isomorphism between the
Bruhat intervals $[y,x]$ and $[y',x']$ in general. For example, in type $A_3$
with simple reflections $r,s,t$
representing the following nodes of the Dynkin diagram:
$\xymatrix{r\ar@{-}[r]&s\ar@{-}[r]&t}$, we obviously have 
$(rts,e)\sim(srts,s)$. However, the boolean interval
$[e,rts]$ is not poset isomorphic to the interval $[s,srts]$.
In fact, they even have different characters as graded posets.

\subsection{Boolean and coboolean elements}\label{s5.2}

Recall that an element  $w\in W$ is called {\em boolean} provided it is a 
multiplicity-free product of simple reflections. The name is justified by
the observation that the Bruhat ideal $[e,w]$, for a boolean element $w$,
is isomorphic, as a poset, to the poset of subsets of the set of simple
reflections appearing in $w$. 

\begin{proposition}\label{prop5.2-1}
Let $x',y'\in W$ be such that $x'\geq y'$.

\begin{enumerate}[$($a$)$]
\item\label{prop5.2-1.1}
Assume that the equivalence class
of $(x',y')$ contains some $(x,y)$  with $x$ boolean. Then
$\mathrm{ext}^k(\Delta_x'\langle i\rangle,\Delta_y')$ is 
given by Fomula~\ref{eq4.4-5}.
\item\label{prop5.2-1.2}
Assume that the equivalence class
of $(x',y')$ contains some $(x,y)$ with $w_0y$ boolean. Then
$\mathrm{ext}^k(\Delta_x'\langle i\rangle,\Delta_y')$ is 
given by Fomula~\ref{eq4.4-5}.
\end{enumerate}
\end{proposition}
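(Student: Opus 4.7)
The plan is to reduce both parts to a single core statement via Proposition~\ref{prop5.1-1}, to connect them by a symmetry of $E$, and then to leverage boolean-ness to show every extension is expected, after which Corollary~\ref{cor4.5-3} delivers Formula~\eqref{eq4.4-5}. By Proposition~\ref{prop5.1-1}, $\e_{x',y'}=\e_{x,y}$ on equivalence classes, so it suffices to prove Formula~\eqref{eq4.4-5} for a representative $(x,y)$ with $x$ itself boolean in part~\eqref{prop5.2-1.1}, and with $w_0y$ itself boolean in part~\eqref{prop5.2-1.2}.

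Part~\eqref{prop5.2-1.2} follows from part~\eqref{prop5.2-1.1} via the involution of Proposition~\ref{prop4.1-1}\eqref{prop4.1-1.5}: the identity $E(x,y,i,0,k)=E(w_0y,w_0x,-i,0,k)$ sends a pair with $w_0y$ boolean to the pair $(w_0y,w_0x)$ whose first argument is boolean, and the change of variables in Proposition~\ref{prop4poly}\eqref{prop5.5} carries the expected diagonal to itself and preserves length differences, so Formula~\eqref{eq4.4-5} for the transported pair yields Formula~\eqref{eq4.4-5} for $(x,y)$.

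The heart of the argument is part~\eqref{prop5.2-1.1}. Assume $x$ is boolean. A non-zero class in $\mathrm{ext}^k(\Delta_x\langle i\rangle,\Delta_y)$ is realised by a non-zero map $\varphi:\Delta_x\langle i\rangle\to T_w\langle k\rangle$ for some summand $T_w\langle k\rangle$ of $\mathcal{T}_k(\Delta_y)$, and since $\varphi$ must be injective on the simple socle of $\Delta_x\langle i\rangle$ (Subsection~\ref{ss.Vermainclusion}), we obtain $x\leq w$. By the analysis in Subsection~\ref{s4.5}, the contribution is additional only if either $p_{w_0y^{-1}w_0,w_0w^{-1}w_0}$ (controlling the non-expected length case) or $p_{e,w_0w}$ (controlling the non-simple socle case) is non-trivial at the relevant coefficient. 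The inversion identity $p_{a,b}=p_{a^{-1},b^{-1}}$ combined with the Dynkin-automorphism invariance rewrite the former as $p_{y,w}$ and relate the latter to a KL polynomial indexed inside $[e,x]$. Since boolean-ness of $x$ makes $[e,x]$ a Boolean lattice on which every KL polynomial $p_{u,v}$ is a single monomial of degree $\ell(v)-\ell(u)$, this triviality propagates via the symmetries to every relevant $w$, placing the problem in the hypothesis of Corollary~\ref{cor4.5-2} and concluding that all extensions are expected, as required.

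The main obstacle is the propagation step: boolean-ness of $x$ directly trivialises KL polynomials whose arguments lie in $[e,x]$, whereas the polynomials $p_{w_0y^{-1}w_0,w_0w^{-1}w_0}$ and $p_{e,w_0w}$ involve elements $w\geq x$ possibly outside $[e,x]$. To bridge this gap, I would combine the Koszul--Ringel symmetry of Proposition~\ref{prop4.1-1}\eqref{prop4.1-1.3} with the inversion symmetry of Proposition~\ref{prop4.1-1}\eqref{prop4.1-1.4}; these interchange source and target of the extensions, transforming the boolean hypothesis on the first argument into a coboolean hypothesis on the second, in which form Corollary~\ref{cor4.5-2} applies directly to the dual picture.
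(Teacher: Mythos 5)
The overall scaffolding of your proposal (reduce via Proposition~\ref{prop5.1-1} to a representative pair, pass between the two cases by a Ringel-type symmetry, then use boolean-ness to trivialize the relevant KL polynomials) is in the same spirit as the paper's argument, but the core step is taken along the wrong route and does not close.

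The decisive gap is in your treatment of part~\eqref{prop5.2-1.1}. You try to conclude from the discussion in Subsection~\ref{s4.5} (equivalently, Corollary~\ref{cor4.5-2}), which requires triviality of the polynomials $p_{w_0y^{-1}w_0,w_0w^{-1}w_0}$ \emph{and} $p_{e,w_0w}$. The first rewrites to $p_{y,w}$ and is indeed trivial for $y\le w\le x$ with $x$ boolean. But $p_{e,w_0w}$ is simply not trivial in general when $x$ is boolean: take $W$ of type $A_3$ with $x=rst$ (boolean), $y=e$, $w=s$; then $w_0s=rstsr$ and $p_{e,w_0s}=p_{e,rstsr}=v^3+v^5$ is nontrivial. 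You flag this as ``the main obstacle'' yourself, but your proposed repair — apply Proposition~\ref{prop4.1-1}\eqref{prop4.1-1.3},\eqref{prop4.1-1.4} to turn the boolean hypothesis on the source into a coboolean hypothesis on the target — is exactly the reduction of~\eqref{prop5.2-1.1} to~\eqref{prop5.2-1.2}, which you already used in the opposite direction, so the argument is circular. (There is also a small sign error: $\hom(\Delta_x,T_w)\ne 0$ forces $x\ge w$, not $x\le w$.)

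The paper instead argues through Proposition~\ref{klkl} directly, which controls the full hom complex $\hom(\Delta_\bullet,\T(\Delta_\bullet))$ (not just the cohomology) in terms of the products $p_{xw_0,zw_0}(u)\,p_{y,z}(v)$ for $y\le z\le x$. When $x$ is boolean (or $w_0y$ is boolean), both families of KL polynomials appearing there \emph{are} trivial monomials — the boolean fact plus conjugation plus the KL inversion formula — so all homs are concentrated on the expected anti-diagonal, hence all extensions are expected and Formula~\eqref{eq4.4-5} follows from Theorem~\ref{thm3.2-1}. The criterion in Subsection~\ref{s4.5} that you invoke is only necessary (not sufficient) for an additional extension: a summand $T_w$ with nonsimple socle need not produce one, because the relevant hom degrees are still constrained by $p_{xw_0,ww_0}$, which the boolean hypothesis does trivialize. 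So the fix is to work with Proposition~\ref{klkl} rather than with Corollary~\ref{cor4.5-2}.
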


\begin{proof}
The two claims of the proposition are connected by the Ringel duality and the simple preserving duality $^\star$,
so it is enough to prove Claim~\eqref{prop5.2-1.2}. By Proposition~\ref{prop5.1-1}, 
it is enough to consider the case $x=x'$ and $y=y'$, i.e., $w_0y$ is boolean. (Note that the latter is if and only if $yw_0$ is boolean.)
We claim that, in this case, all KL polynomials $p_{xw_0,zw_0}$ and $p_{y,z}$, where $y\leq z\leq x$, are trivial. 
This and Proposition~\ref{klkl} proves \eqref{prop5.2-1.2}.

The claim is a well-known property of KL polynoimials (see \cite[Exercise 5.36.(e)]{BB}), which is proved, for example, as follows. If $w$ is boolean, say $w=st\cdots u$ for $s,t,\cdots,u\in S$ distinct, then the KL basis element is of the form $\underline{H}_w=\underline{H}_s\underline{H}_t\cdots \underline{H}_u$. So all KL polynoimals $p_{w',w}$, for $w'\leq w$, are trivial, and so are $p_{w_0w'w_0,w_0ww_0}$. By Kazhdan-Lusztig inversion formula (see \cite[Section 3]{KL}), the same is true for $p_{w_0w,w_0w'}$ and $p_{ww_0,w'w_0}$. 
These include all $p_{xw_0,zw_0}$ and $p_{y,z}$ since $zw_0\leq yw_0$ is boolean.
\end{proof}

\section{Special  cases}\label{s6}
\subsection{Type $A_1$}\label{s6.1}

In type $A_1$, we have $W=\{e,s\}$. The only non-zero extension of positive degree between
Verma modules is $\mathrm{ext}^1(\Delta_s\langle 1\rangle,\Delta_e)\cong \mathbb{C}$
realized in the projective module $P_s$. Here is the table for 
$\tE_{x,y}$:
\begin{displaymath}
\begin{array}{c||c|c}
x\setminus y& e& s\\
\hline\hline
e&1&0\\
\hline
s&\upsilon+\omega\upsilon^{-1}&1 
\end{array}
\end{displaymath}

\subsection{Type $A_2$}\label{s6.2}

In type $A_2$, we have $W=\{e,s,t,st,ts,w_0=sts=tst\}$.
By Corollary~\ref{cor4.5-1}, all extensions between Verma modules in this
case are given by Formula~\ref{eq4.4-5} via the coefficients of $R$-polynomials.
Here is the table for the $R$-polynomials in this case:

\resizebox{\textwidth}{!}{$
\begin{array}{c||c|c|c|c|c|c}
x\setminus y& e& s& t& st& ts& w_0\\
\hline\hline
e&1&0&0&0&0&0\\
\hline
s&v-v^{-1}&1&0&0&0&0\\
\hline
t&v-v^{-1}&0&1&0&0&0\\
\hline
st&v^2-2+v^{-2}&v-v^{-1}&v-v^{-1}&1&0&0\\
\hline
ts&v^2-2+v^{-2}&v-v^{-1}&v-v^{-1}&0&1&0\\
\hline
w_0&v^3-2v+2v^{-1}-v^{-3}&v^2-2+v^{-2}&v^2-2+v^{-2}&v-v^{-1}&v-v^{-1}&1\\
\hline
\end{array}
$}

Here is the table for $\tE$-polynomials in this case:

\resizebox{\textwidth}{!}{$
\begin{array}{c||c|c|c|c|c|c}
x\setminus y& e& s& t& st& ts& w_0\\
\hline\hline
e&1&0&0&0&0&0\\
\hline
s&\upsilon+\omega\upsilon^{-1}&1&0&0&0&0\\
\hline
t&\upsilon+\omega\upsilon^{-1}&0&1&0&0&0\\
\hline
st&\upsilon^2+2\omega+\omega^2\upsilon^{-2}&
\upsilon+\omega\upsilon^{-1}&\upsilon+\omega\upsilon^{-1}&1&0&0\\
\hline
ts&\upsilon^2+2\omega+\omega^2\upsilon^{-2}&
\upsilon+\omega\upsilon^{-1}&\upsilon+\omega\upsilon^{-1}&0&1&0\\
\hline
w_0&\upsilon^3+2\omega\upsilon+2\omega^2\upsilon^{-1}+\omega^3\upsilon^{-3}&
\upsilon^2+2\omega+\omega^2\upsilon^{-2}&
\upsilon^2+2\omega+\omega^2\upsilon^{-2}&\upsilon+\omega\upsilon^{-1}&
\upsilon+\omega\upsilon^{-1}&1\\
\hline
\end{array}
$}

\subsection{Type $A_3$}\label{s6.3}

In type $A_3$, the group $W$ is generated by the simple reflections $r,s,t$
representing the following nodes of the Dynkin diagram:
$\xymatrix{r\ar@{-}[r]&s\ar@{-}[r]&t}$. Our main result in this subsection is
the following.

\begin{theorem}\label{thm6.3-1}
In type $A_3$, all extensions between Verma modules in $\mathcal{O}_0$ 
are expected and given by Forumla~\ref{eq4.4-5}.
\end{theorem}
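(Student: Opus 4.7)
The plan is to combine Corollary~\ref{cor4.4-3}, which already handles all pairs $(x,y)$ with $\ell(x)-\ell(y)\leq 3$, with a finite case analysis of the remaining pairs. Since $\ell(w_0)=6$ in type $A_3$, only the length differences $4$, $5$, and $6$ remain, and the pairs involved are concentrated near the top or bottom of the Bruhat order. The first step is to use Proposition~\ref{prop5.1-1} to group these pairs into $\sim$-equivalence classes, reducing to a small list of representatives; the symmetries in Proposition~\ref{prop4.1-1} cut the list further by identifying $(x,y)$ with $(w_0y,w_0x)$ and with the $w_0\cdot w_0$-conjugates.

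Next, I would apply Proposition~\ref{prop5.2-1} to dispatch every class containing a representative $(x,y)$ with $x$ boolean or $w_0y$ boolean. For a class that survives this, I would seek a representative to which Corollary~\ref{cor4.5-2} applies, i.e.\ one where the KL polynomials $p_{w_0 y^{-1} w_0, w_0 w^{-1} w_0}$ and $p_{e,w_0 w}$ are trivial for every $w \geq y$. The classes that survive both reductions are exactly those that interact with the two nontrivial KL polynomials of type $A_3$, attached to the singular Schubert varieties in $\mathrm{SL}_4/B$ (the permutations $3412$ and $4231$). For these, I would analyze the tilting coresolution $\mathcal{T}_\bullet(\Delta_y)$ explicitly, using Proposition~\ref{prop4.6-2} to subtract off the contribution of the ``wrong-degree'' tilting summands $T_w\langle k\rangle$ with $\ell(w)\neq \ell(y)+k$.

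The main obstacle will be precisely this last step. Whenever a nontrivial KL polynomial forces $\mathcal{T}_\bullet(\Delta_y)$ to contain a summand $T_w\langle k\rangle$ with $\ell(w)\neq \ell(y)+k$, Corollary~\ref{cor.klbound} permits an additional extension a priori, and in $A_3$ this occurs for the few triples $(x,y,w)$ involving $3412$ and $4231$. To rule such extensions out, one must show that every candidate homomorphism $\Delta_x\langle i\rangle \to T_w\langle k\rangle$ composes nontrivially with the tilting differential of $\mathcal{T}_\bullet(\Delta_y)$, exactly as in the proof of Proposition~\ref{prop4.6-2}, but applied to these specific summands rather than only to the ``diagonal'' ones. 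Verifying the required non-vanishing of the differential in each of the finitely many remaining triples in $S_4$ is the concrete work; once it is in hand, all surviving extensions lie on the solid violet side of the triangle in Figure~\ref{fig4.2-1.5}, so they are expected and Formula~\eqref{eq4.4-5} follows from Corollary~\ref{cor4.5-3}.
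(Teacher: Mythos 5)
Your overall strategy (reduce the set of pairs $(x,y)$ using Corollary~\ref{cor4.4-3}, Propositions~\ref{prop5.1-1},~\ref{prop4.1-1},~\ref{prop5.2-1}, and Corollary~\ref{cor4.5-2}, then directly analyze $\mathcal{T}_\bullet(\Delta_y)$ for what remains) is genuinely different from the paper's. The paper never runs through pairs $(x,y)$ at all: it observes that any nonzero chain map $\Delta_x\langle i\rangle[-k]\to\mathcal{T}_\bullet(\Delta_y)$ restricts to a nonzero map on the simple socle $L_{w_0}$ of the Verma, and that every $\mathcal{T}_\bullet(\Delta_y)$ is a subcomplex (up to shift) of $\mathcal{T}_\bullet(\Delta_e)$. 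This collapses the entire question to understanding which shifted copies of $L_{w_0}$ admit nonzero chain maps into $\mathcal{T}_\bullet(\Delta_e)$, i.e.\ to a single complex and a single simple module; the resulting table (Figure~\ref{fig6.3-5}) has only three ``violet'' points to dispose of. Your route, by contrast, would require verifying that the reductions really do leave only a handful of equivalence classes and then treating each survivor; this is not automatic and you would have to carry it out.

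The substantive gap is in the last step. You propose to rule out the remaining additional extensions by applying the argument of Proposition~\ref{prop4.6-2} ``to these specific summands rather than only to the diagonal ones.'' But the proof of Proposition~\ref{prop4.6-2} does not transfer to wrong-degree summands: it hinges on the fact that, for $w\geq y$ with $\ell(w)=\ell(y)+k$, the Verma inclusion $\Delta_{w_0w^{-1}w_0}\langle k\rangle\hookrightarrow\Delta_{w_0y^{-1}w_0}$ Koszul--Ringel dualizes to a componentwise-injective inclusion $\mathcal{T}_\bullet(\Delta_w)\langle -k\rangle[k]\hookrightarrow\mathcal{T}_\bullet(\Delta_y)$, and then uses that every nonzero $\Delta_x\langle i\rangle\to T_w$ fails to commute with the differential in $\mathcal{T}_\bullet(\Delta_w)$, hence also in $\mathcal{T}_\bullet(\Delta_y)$. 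If $\ell(w)\neq\ell(y)+k$ there is no such Verma inclusion and no such subcomplex, so the whole mechanism is unavailable; you would need a new argument, and indeed the paper uses two: a Koszul--Ringel duality argument relating $\mathrm{ext}^1$ to $\mathrm{ext}^3$, and an explicit computation of a differential component using Stroppel's presentation of $\mathcal{O}_0$. In addition, ``show each candidate map composes nontrivially with the differential'' is strictly weaker than what is required: when two candidate socle maps land in the same target (as happens at the point $(2,0)$, where contributions from the non-simple socle of $T_{rt}\langle 2\rangle$ and the simple socle of the off-diagonal summand $T_{srts}\langle 2\rangle$ coincide), you must also check linear independence of their images under the differential, which the paper does by hand. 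Finally, you only discuss wrong-degree summands, but there is a second source of additional contributions — ordinary-degree summands $T_w\langle k\rangle$ whose socle is not simple (coming from nontrivial $p_{e,w_0w}$) — and your proposal as written does not address it.
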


Please note that Theorem~\ref{thm6.3-1} does not claim that, in type $A_3$,
we are always in the situation as described by the assumptions of Corollary~\ref{cor4.4-1}.
The claim is that, regardless whether the assumptions of Corollary~\ref{cor4.4-1},
all extensions between Verma modules are given by Forumla~\ref{eq4.4-5}.

\begin{proof}
%We first consider the extension from a Verma module to $\Delta_e$. 
%The dominant Verma module in $\mathcal{O}_0$ for type $A_3$ is explicitly described in \cite[Appendix~A]{St}.
In type $A_3$, there are two non-trivial
KL-polynomials of the form $p_{e,w}$, namely:
\begin{displaymath}
p_{e,srts}=v^2+v^4\quad\text{ and }\quad p_{e,rstsr}=v^3+v^5. 
\end{displaymath}
This implies the followimg two facts:
\begin{itemize}
\item The minimal tilting coresolution $\mathcal{T}_\bullet(\Delta_e)$,
apart from the ``expected'' summands $T_w\langle\ell(w)\rangle[-\ell(w)]$, where $w\in S_4$, 
also has two additional summands: $T_{srts}\langle 2\rangle[-2]$ and 
$T_{rstsr}\langle 3\rangle[-3]$.
\item The following tilting modules have non-simple socle:
\begin{itemize}
\item the module $T_{s}$, whose expected part of the socle is 
$L_{w_0}\langle -\ell(w_0s)\rangle$, has additional socle 
$L_{w_0}\langle -\ell(w_0s)+2\rangle$,
\item the module $T_{rt}$, whose expected part of the socle is 
$L_{w_0}\langle -\ell(w_0rt)\rangle$, has additional socle 
$L_{w_0}\langle -\ell(w_0rt)+2\rangle$.
\end{itemize}
\end{itemize}
Every $T_w$ not listed above has socle $L_{w_0}\langle -\ell(w_0w)\rangle$. 
We can now collect the information about the socles of all tilting summands 
appearing in  $\mathcal{T}_\bullet(\Delta_e)$ in Figure~\ref{fig6.3-5}.
Here the expected hom dimension between $L_{w_0}=\Delta_{w_0}$ and each $\mathcal T_i(\Delta_e)$ is highlighted by {\color{magenta}magenta} 
color and the additional part is highlighted by the {\color{violet}violet} color. 
The four additional dimensions comes from the above list:
\begin{itemize}
\item the additional socle of $T_{s}\langle 1\rangle[-1]$ gives one dimension 
at the point $(1,-2)$,
\item the additional socle of $T_{rt}\langle 2\rangle[-2]$ gives one dimension 
at the point $(2,0)$,
\item the socle of the additional summand $T_{srts}\langle 2\rangle[-2]$ 
gives one dimension at the point $(2,0)$,
\item the socle of the additional summand $T_{rstsr}\langle 2\rangle[-2]$ 
gives one dimension at the point $(3,2)$,
\end{itemize}
The goal is to show that no homomorphism in a violet space gives rise to a nonzero homomorphism of complexes between the corresponding shifts of $L_{w_0}$ and $\T(\Delta_e)$. From this it follows that there is no additional homomorphism of complexes (i.e., no homomorphism of complexes that possibly gives an additional extension) between $\Delta_x$ and $\T(\Delta_e)$ since the latter would restrict to a homomorphism from the socle $L_{w_0}$ (see also the socle discussion in Subsection~\ref{s4.5}).

\begin{figure}
\resizebox{5cm}{!}{
\begin{tikzpicture}
\draw[gray, thin,  ->] (8,0) -- (-1,0) node[anchor=south west] {$\left[ a\right]$};
\draw[gray, thin,  ->] (0,-7) -- (0,7) node[anchor=north west] {$\langle b\rangle$};
\draw[gray,fill=gray] (0,0) circle (.3ex) node[anchor=north east] {\color{gray}\tiny$0$};
\draw[gray,fill=gray] (1,0) circle (.3ex) node[anchor=north east] {\color{gray}\tiny$-1$};
\draw[gray,fill=gray] (2,0) circle (.3ex) node[anchor=north east] {\color{gray}\tiny$-2$};
\draw[gray,fill=gray] (3,0) circle (.3ex) node[anchor=north east] {\color{gray}\tiny$-3$};
\draw[gray,fill=gray] (4,0) circle (.3ex) node[anchor=north east] {\color{gray}\tiny$-4$};
\draw[gray,fill=gray] (5,0) circle (.3ex) node[anchor=north east] {\color{gray}\tiny$-5$};
\draw[gray,fill=gray] (6,0) circle (.3ex) node[anchor=north east] {\color{gray}\tiny$-6$};
\draw[gray,fill=gray] (7,0) circle (.3ex) node[anchor=north east] {\color{gray}\tiny$-7$};
\draw[gray,fill=gray] (0,1) circle (.3ex) node[anchor=north east] {\color{gray}\tiny$1$};
\draw[gray,fill=gray] (1,1) circle (.3ex);
\draw[gray,fill=gray] (2,1) circle (.3ex);
\draw[gray,fill=gray] (3,1) circle (.3ex);
\draw[gray,fill=gray] (4,1) circle (.3ex);
\draw[gray,fill=gray] (5,1) circle (.3ex);
\draw[gray,fill=gray] (6,1) circle (.3ex);
\draw[gray,fill=gray] (7,1) circle (.3ex);
\draw[gray,fill=gray] (0,2) circle (.3ex) node[anchor=north east] {\color{gray}\tiny$2$};
\draw[gray,fill=gray] (1,2) circle (.3ex);
\draw[gray,fill=gray] (2,2) circle (.3ex);
\draw[gray,fill=gray] (3,2) circle (.3ex);
\draw[gray,fill=gray] (4,2) circle (.3ex);
\draw[gray,fill=gray] (5,2) circle (.3ex);
\draw[gray,fill=gray] (6,2) circle (.3ex);
\draw[gray,fill=gray] (7,2) circle (.3ex);
\draw[gray,fill=gray] (0,3) circle (.3ex) node[anchor=north east] {\color{gray}\tiny$3$};
\draw[gray,fill=gray] (1,3) circle (.3ex);
\draw[gray,fill=gray] (2,3) circle (.3ex);
\draw[gray,fill=gray] (3,3) circle (.3ex);
\draw[gray,fill=gray] (4,3) circle (.3ex);
\draw[gray,fill=gray] (5,3) circle (.3ex);
\draw[gray,fill=gray] (6,3) circle (.3ex);
\draw[gray,fill=gray] (7,3) circle (.3ex);
\draw[gray,fill=gray] (0,4) circle (.3ex) node[anchor=north east] {\color{gray}\tiny$4$};
\draw[gray,fill=gray] (1,4) circle (.3ex);
\draw[gray,fill=gray] (2,4) circle (.3ex);
\draw[gray,fill=gray] (3,4) circle (.3ex);
\draw[gray,fill=gray] (4,4) circle (.3ex);
\draw[gray,fill=gray] (5,4) circle (.3ex);
\draw[gray,fill=gray] (6,4) circle (.3ex);
\draw[gray,fill=gray] (7,4) circle (.3ex);
\draw[gray,fill=gray] (0,5) circle (.3ex) node[anchor=north east] {\color{gray}\tiny$5$};
\draw[gray,fill=gray] (1,5) circle (.3ex);
\draw[gray,fill=gray] (2,5) circle (.3ex);
\draw[gray,fill=gray] (3,5) circle (.3ex);
\draw[gray,fill=gray] (4,5) circle (.3ex);
\draw[gray,fill=gray] (5,5) circle (.3ex);
\draw[gray,fill=gray] (6,5) circle (.3ex);
\draw[gray,fill=gray] (7,5) circle (.3ex);
\draw[gray,fill=gray] (0,6) circle (.3ex) node[anchor=north east] {\color{gray}\tiny$6$};
\draw[gray,fill=gray] (1,6) circle (.3ex);
\draw[gray,fill=gray] (2,6) circle (.3ex);
\draw[gray,fill=gray] (3,6) circle (.3ex);
\draw[gray,fill=gray] (4,6) circle (.3ex);
\draw[gray,fill=gray] (5,6) circle (.3ex);
\draw[gray,fill=gray] (6,6) circle (.3ex);
\draw[gray,fill=gray] (7,6) circle (.3ex);
\draw[gray,fill=gray] (0,-1) circle (.3ex) node[anchor=north east] {\color{gray}\tiny$-1$};
\draw[gray,fill=gray] (1,-1) circle (.3ex);
\draw[gray,fill=gray] (2,-1) circle (.3ex);
\draw[gray,fill=gray] (3,-1) circle (.3ex);
\draw[gray,fill=gray] (4,-1) circle (.3ex);
\draw[gray,fill=gray] (5,-1) circle (.3ex);
\draw[gray,fill=gray] (6,-1) circle (.3ex);
\draw[gray,fill=gray] (7,-1) circle (.3ex);
\draw[gray,fill=gray] (0,-2) circle (.3ex) node[anchor=north east] {\color{gray}\tiny$-2$};
\draw[gray,fill=gray] (1,-2) circle (.3ex);
\draw[gray,fill=gray] (2,-2) circle (.3ex);
\draw[gray,fill=gray] (3,-2) circle (.3ex);
\draw[gray,fill=gray] (4,-2) circle (.3ex);
\draw[gray,fill=gray] (5,-2) circle (.3ex);
\draw[gray,fill=gray] (6,-2) circle (.3ex);
\draw[gray,fill=gray] (7,-2) circle (.3ex);
\draw[gray,fill=gray] (0,-3) circle (.3ex) node[anchor=north east] {\color{gray}\tiny$-3$};
\draw[gray,fill=gray] (1,-3) circle (.3ex);
\draw[gray,fill=gray] (2,-3) circle (.3ex);
\draw[gray,fill=gray] (3,-3) circle (.3ex);
\draw[gray,fill=gray] (4,-3) circle (.3ex);
\draw[gray,fill=gray] (5,-3) circle (.3ex);
\draw[gray,fill=gray] (6,-3) circle (.3ex);
\draw[gray,fill=gray] (7,-3) circle (.3ex);
\draw[gray,fill=gray] (0,-4) circle (.3ex) node[anchor=north east] {\color{gray}\tiny$-4$};
\draw[gray,fill=gray] (1,-4) circle (.3ex);
\draw[gray,fill=gray] (2,-4) circle (.3ex);
\draw[gray,fill=gray] (3,-4) circle (.3ex);
\draw[gray,fill=gray] (4,-4) circle (.3ex);
\draw[gray,fill=gray] (5,-4) circle (.3ex);
\draw[gray,fill=gray] (6,-4) circle (.3ex);
\draw[gray,fill=gray] (7,-4) circle (.3ex);
\draw[gray,fill=gray] (0,-5) circle (.3ex) node[anchor=north east] {\color{gray}\tiny$-5$};
\draw[gray,fill=gray] (1,-5) circle (.3ex);
\draw[gray,fill=gray] (2,-5) circle (.3ex);
\draw[gray,fill=gray] (3,-5) circle (.3ex);
\draw[gray,fill=gray] (4,-5) circle (.3ex);
\draw[gray,fill=gray] (5,-5) circle (.3ex);
\draw[gray,fill=gray] (6,-5) circle (.3ex);
\draw[gray,fill=gray] (7,-5) circle (.3ex);
\draw[gray,fill=gray] (0,-6) circle (.3ex) node[anchor=north east] {\color{gray}\tiny$-6$};
\draw[gray,fill=gray] (1,-6) circle (.3ex);
\draw[gray,fill=gray] (2,-6) circle (.3ex);
\draw[gray,fill=gray] (3,-6) circle (.3ex);
\draw[gray,fill=gray] (4,-6) circle (.3ex);
\draw[gray,fill=gray] (5,-6) circle (.3ex);
\draw[gray,fill=gray] (6,-6) circle (.3ex);
\draw[gray,fill=gray] (7,-6) circle (.3ex);
\draw[magenta,fill=magenta] (0,-6) circle (.4ex) node[anchor=west] {\color{magenta}\large$1$};
\draw[magenta,fill=magenta] (1,-4) circle (.4ex) node[anchor=west] {\color{magenta}\large$3$};
\draw[magenta,fill=magenta] (2,-2) circle (.4ex) node[anchor=west] {\color{magenta}\large$5$};
\draw[magenta,fill=magenta] (3,0) circle (.4ex) node[anchor=west] {\color{magenta}\large$6$};
\draw[magenta,fill=magenta] (4,2) circle (.4ex) node[anchor=west] {\color{magenta}\large$5$};
\draw[magenta,fill=magenta] (5,4) circle (.4ex) node[anchor=west] {\color{magenta}\large$3$};
\draw[magenta,fill=magenta] (6,6) circle (.4ex) node[anchor=west] {\color{magenta}\large$1$};
\draw[violet,fill=violet]   (1,-2) circle (.4ex) node[anchor=west] {\color{violet}\large$1$};
\draw[violet,fill=violet]   (2,0) circle (.4ex) node[anchor=west] {\color{violet}\large$2$};
\draw[violet,fill=violet]   (3,2) circle (.4ex) node[anchor=west] {\color{violet}\large$1$};
\end{tikzpicture}
}
\caption{Dimensions of socles for summands of $\mathcal{T}_\bullet(\Delta_e)$}\label{fig6.3-5}
\end{figure}
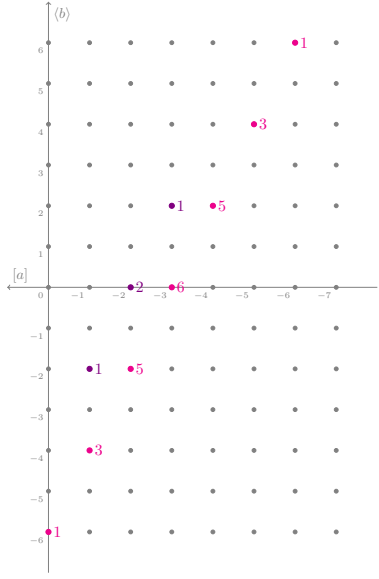

By Proposition~\ref{prop4.6-2}, the additional socle of $T_{s}\langle 1\rangle[-1]$ 
does not contribute to a homomorphism of complexes from $L_{w_0}\langle 1\rangle[-2]$
to $\mathcal{T}_\bullet(\Delta_e)$. This means that there are no additional
first extensions to $\Delta_e$ (this is a general fact, see \cite[Theorem~32]{Ma})
from any Verma modules. Therefore the additional violet 
dimension $1$ at the point $(-1,-2)$  decreases the value $5$ at the point
$(-2,-2)$ by $1$, resulting in the dimension $4$.%, which is a coefficients of the $R$-polynomial.

By Koszul-Ringel duality, the fact that we have
$\mathrm{ext}^1(\Delta_{w_0}\langle -2\rangle,\Delta_e)=0$ implies that we have
$\mathrm{ext}^3(\Delta_{w_0}\langle 2\rangle,\Delta_e)=0$.
Therefore, any homomorphism of complexes from $\Delta_{w_0}\langle 2\rangle[-3]$
to $\mathcal{T}_\bullet(\Delta_e)$ is homotopic to zero. Since no homotopies 
between these two complexes are
possible (as $\mathrm{hom}(\Delta_{w_0}\langle 2\rangle,\mathcal{T}_2(\Delta_e))=0$),
it follows that the composition of the differential in %restirction of the differntial in 
$\mathcal{T}_\bullet(\Delta_e)$
%to the image of $\Delta_{w_0}$ under a non-zero map from $\mathrm{hom}(\Delta_{w_0}\langle 2\rangle,\mathcal{T}_3(\Delta_e))$
homomorphism $\Delta_{w_0}\langle 2\rangle to \mathcal{T}_3(\Delta_e))$ is non-zero. 
Therefore the only relevance of the additional violet 
dimension $1$ at the point $(-3,2)$ is that it decreases the value $5$ at the point
$(-4,2)$ by $1$ resulting in $4$, which is a coefficients of the $R$-polynomial.

It remains to deal with the violet point $(-2,0)$. Similarly to the above,
using Proposition~\ref{prop4.6-2}, we obtain that the restriction of
the differential in $\mathcal{T}_\bullet(\Delta_e)$ to
the additional socle of  $T_{rt}\langle 2\rangle[-2]$ is non-zero.
This takes care of one dimension at the violet point $(2,0)$.
The second dimension at this point corresponds to the usual (simple) socle
of the additional summand $T_{srts}\langle 2\rangle[-2]$. We now argue that
the image of the restriction of the differential in $\mathcal{T}_\bullet(\Delta_e)$ to
this socle component is linearly independent to the image of the restriction of
the differential in $\mathcal{T}_\bullet(\Delta_e)$ to
the additional socle of  $T_{rt}\langle 2\rangle[-2]$.

Our argument is very much ad hoc, we use the explicit description of $\Delta_e$
as given in \cite[Appendix~A]{St}. From it we see that the differential in
$\mathcal{T}_\bullet(\Delta_e)$ restricts to a non-zero map from 
$T_{srts}\langle 2\rangle[-2]$ to $T_{sts}\langle 3\rangle[-3]$ and, 
at the same time, this differential restricts to the zero map from
$T_{rt}\langle 2\rangle[-2]$ to $T_{sts}\langle 3\rangle[-3]$. It remains to show that
any non-zero map from $T_{srts}\langle 2\rangle[-2]$ to $T_{sts}\langle 3\rangle[-3]$ 
is injective (i.e., does not kill the socle). Since the domain has a standard filtration
and the codomain has a costandrd filtration, a non-zero map from 
$T_{srts}\langle 2\rangle[-2]$ to $T_{sts}\langle 3\rangle[-3]$ must be a linear 
combination of maps lifted from some standard subquotient of 
$T_{srts}\langle 2\rangle[-2]$ to some costandrd subquotient of 
$T_{sts}\langle 3\rangle[-3]$. There is only one such pair which appears
with correct shifts: $\Delta_{srts}\langle 2\rangle[-2]$ for $T_{srts}\langle 2\rangle[-2]$
and $\nabla_{srts}\langle 2\rangle[-2]$ for $T_{sts}\langle 3\rangle[-3]$.
Therefore, a non-zero homomorphism from $T_{srts}\langle 2\rangle[-2]$ to 
$T_{sts}\langle 3\rangle[-3]$ is a lift of a homomorphism from 
$\Delta_{srts}\langle 2\rangle[-2]$ to $T_{sts}\langle 3\rangle[-3]$.
Since any homomorphism from a Verma module to a tilting module is injective,
a non-zero map from $\Delta_{srts}\langle 2\rangle[-2]$ to $T_{sts}\langle 3\rangle[-3]$
is injective. Since the socles of $T_{srts}\langle 2\rangle[-2]$
and $\Delta_{srts}\langle 2\rangle[-2]$ coincide, we get the claim.

The arguments above show that, in the category of complexes, the only 
homomorphisms from Verma modules to $\mathcal{T}_\bullet(\Delta_e)$ are
with expected shifts. Since all $\mathcal{T}_\bullet(\Delta_y)$ are subcomplexes
of $\mathcal{T}_\bullet(\Delta_e)$ (up to some shifts in both homological and
grading), it follows that the only possible extensions 
between Verma modules are expected extensions. %Since they all are expected, t
The fact
that they are given by Forumla~\ref{eq4.4-5} follows from the graded Delorme formula.
This completes the proof.
\end{proof}

\subsection{Some first extensions between Verma modules in type $A$}\label{s6.4}

In this subsection we use the results of \cite{KMM1} to construct many additional 
first extensions between Verma modules in type $A$. We refer to 
\cite{KMM1} for the details of the facts recalled below. We assume that $W=S_n$ with the
Dynkin diagram 
\[\xymatrix{s_1\ar@{-}[r]&s_2\ar@{-}[r]&\dots\ar@{-}[r]&s_{n-1}}.\]
For $i,j\in\{1,2,\dots,n-1\}$, we denote by $\hat{w}_{i,j}$ the following element:
\begin{displaymath}
\hat{w}_{i,j}=
\begin{cases}
s_is_{i-1}\dots s_j,& j\leq i;\\
s_is_{i+1}\dots s_j,& j> i.
\end{cases}
\end{displaymath}
Note that $\hat{w}_{i,i}=s_i$. By construction, $\hat{w}_{i,j}$ has left 
descent $s_i$ and right descent $s_j$. We also denote by $w_{i,j}$ the 
element $\hat{w}_{i,n-j}w_0$. Dually, the element  $w_{i,j}$ has left
ascent $s_i$ and right ascent $s_j$. The set $\{w_{i,j}\}$ is exactly the
penultimate two-sided KL cell in $S_n$.

Each simple $L_{w_{i,j}}$ is graded multiplicity-free in $\Delta_e$. 
In fact, $[\Delta_e:L_{w_{i,j}}\langle -m\rangle]=1$ if and only if
$m\in\{\ell(w_{i,j}),\ell(w_{i,j})-2,\ell(w_{i,j})-4,\dots,
\ell(w_{i,j})-2q_{i,j}\}$ where
\begin{displaymath}
q_{i,j}=\min\{n-1-i,n-1-j,i-1,j-1\}.
\end{displaymath}
Note that the ungraded multiplicity of $L_{w_{i,j}}$ in $\Delta_e$
equals $1+q_{i,j}$. 

Recall that an element of $S_n$ is called bigrassmannian provided that 
it has a unique left descent and a unique right descent
(for example, all $\hat{w}_{i,j}$ are bigrassmannian).
There are exactly $1+q_{i,j}$ bigrassmannian elements
in $S_n$ with left descent $s_i$ and right descent $s_j$. They form a
chain with respect to the Bruhat order and hence are in natural bijection,
denoted $\Phi_{i,j}$, with the graded simple subquotients of $\Delta_e$ 
isomorphic, up to a shift, to  $L_{w_{i,j}}$ (ordered by increasing 
graded shifts).  In particular, the element $\hat{w}_{i,j}$ corresponds
to $L_{w_{i,j}}\langle -(\ell(w_{i,j})-2q_{i,j})\rangle$.

For $w\in S_n$, denote by $\mathbf{BM}_w$ the set of all  Bruhat maximal
elements in the set of all bigrassmannian elements of the Bruhat interval
$[e,w]$.

\begin{proposition}\label{prop6.4-1}
Let $w\in S_n$ and let $u\in \mathbf{BM}_w$ have left descent $s_i$ and
right descent $s_j$. If  $\Phi_{i,j}(u)=L_{w_{i,j}}\langle m\rangle$
and $w_{i,j}\geq w$,
then $\mathrm{ext}^1(\Delta_{w_{i,j}}\langle m\rangle,\Delta_{w})\neq 0$.
\end{proposition}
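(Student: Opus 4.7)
The plan is to lift a nonzero first extension from a simple module to a Verma module, furnished by \cite{KMM1}, into a nonzero first extension between two Verma modules.

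First, I would invoke the main theorem of \cite{KMM1}, which in type $A$ gives a combinatorial formula for $\dim\mathrm{ext}^1(L_x\langle m\rangle,\Delta_w)$ expressed in terms of $\mathbf{BM}_w$. Under the hypotheses of the proposition, the bigrassmannian element $u\in\mathbf{BM}_w$ with left descent $s_i$ and right descent $s_j$ contributes a nonzero class
\[\xi\in\mathrm{ext}^1(L_{w_{i,j}}\langle m\rangle,\Delta_w),\]
where the graded shift $m$ is precisely the one pinned down by $\Phi_{i,j}(u)=L_{w_{i,j}}\langle m\rangle$. The condition $w_{i,j}\ge w$ ensures the requisite composition factor is present in the first place.

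Next, I would apply $\mathrm{hom}(-\langle m\rangle,\Delta_w)$ to the short exact sequence
\[0\to\mathrm{rad}(\Delta_{w_{i,j}})\to\Delta_{w_{i,j}}\to L_{w_{i,j}}\to 0\]
to obtain the long exact sequence
\[\mathrm{hom}(\mathrm{rad}(\Delta_{w_{i,j}})\langle m\rangle,\Delta_w)\xrightarrow{\delta}\mathrm{ext}^1(L_{w_{i,j}}\langle m\rangle,\Delta_w)\to\mathrm{ext}^1(\Delta_{w_{i,j}}\langle m\rangle,\Delta_w).\]
It then suffices to show that $\xi\notin\mathrm{im}(\delta)$, so that $\xi$ maps to a nonzero class in $\mathrm{ext}^1(\Delta_{w_{i,j}}\langle m\rangle,\Delta_w)$.

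The hard part is the analysis of $\delta$. Any preimage of $\xi$ would, after composing with an inclusion of a Verma subquotient of $\mathrm{rad}(\Delta_{w_{i,j}})$, produce a nonzero homomorphism from a shifted Verma module to $\Delta_w$. By Subsection~\ref{ss.Vermainclusion}, such a homomorphism forces a strict Bruhat comparison $w_{i,j}<w'\le w$ with graded shift prescribed by $\ell(w')-\ell(w_{i,j})$. The Bruhat maximality of $u$ among bigrassmannians in $[e,w]$, together with the specific shift encoded by $\Phi_{i,j}(u)$, should rule out any such factorization: the combinatorial shift $m$ that the KMM1 class attaches to $u$ cannot be realized through a Verma inclusion below $w_{i,j}$. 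I would finish by matching the combinatorial data of $\xi$ against the data of each possible Verma inclusion and reading off the resulting incompatibility, which is the step requiring the most careful bookkeeping.
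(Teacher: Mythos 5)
Your opening move matches the paper exactly: pull a nonzero class out of $\mathrm{ext}^1(L_{w_{i,j}}\langle m\rangle,\Delta_w)$ via \cite[Corollary~2]{KMM1}, write the short exact sequence $0\to\mathrm{Ker}\to\Delta_{w_{i,j}}\langle m\rangle\to L_{w_{i,j}}\langle m\rangle\to 0$, and use the long exact sequence to push that class forward into $\mathrm{ext}^1(\Delta_{w_{i,j}}\langle m\rangle,\Delta_w)$. The problem is that the step you flag as ``the hard part requiring the most careful bookkeeping'' is exactly the step you never carry out, and the plan you outline for it does not lead anywhere.

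Concretely, you want to show that $\xi\notin\mathrm{im}(\delta)$ by ``matching combinatorial data'' and ``reading off an incompatibility.'' But there is no combinatorial incompatibility to be found: the composite of a nonzero element of $\mathrm{hom}(\mathrm{Ker}\langle m\rangle,\Delta_w)$ with the socle inclusion $\Delta_{w_0}\hookrightarrow\mathrm{Ker}$ \emph{is} a perfectly valid shifted Verma inclusion into $\Delta_w$ (and your displayed Bruhat constraint $w_{i,j}<w'\le w$ has the inequality direction reversed --- for a nonzero map $\Delta_{w'}\to\Delta_w$ one needs $w'\ge w$, not $w'\le w$). What the paper actually shows is something much cleaner, which sidesteps all bookkeeping: the connecting map $\delta$ is identically zero, because the restriction map $\mathrm{hom}(\Delta_{w_{i,j}}\langle m\rangle,\Delta_w)\to\mathrm{hom}(\mathrm{Ker}\langle m\rangle,\Delta_w)$ is surjective. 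This is an ungraded dimension count. One has $\mathrm{Hom}_{\mathcal O}(\Delta_{w_{i,j}},\Delta_w)\cong\mathbb C$ because $w_{i,j}\ge w$, and $\mathrm{Hom}_{\mathcal O}(\mathrm{Ker},\Delta_w)\cong\mathbb C$ because $\mathrm{Ker}$ and $\Delta_{w_{i,j}}$ have the same simple socle $L_{w_0}$ occurring with multiplicity one, so any map to $\Delta_w$ is determined by its restriction to that socle. The injective restriction map between two one-dimensional spaces is then forced to be an isomorphism, hence $\mathrm{ext}^1(L_{w_{i,j}}\langle m\rangle,\Delta_w)\hookrightarrow\mathrm{ext}^1(\Delta_{w_{i,j}}\langle m\rangle,\Delta_w)$, and you are done. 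Your plan, by contrast, leaves a genuine gap: you would need this argument (or an equivalent one) to finish, and the ``combinatorial incompatibility'' route as written does not close it.
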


\begin{proof}
Applying $\mathrm{hom}({}_-, \Delta_{w})$ to the short exact sequence
\begin{displaymath}
0\to \mathrm{Ker}\to \Delta \langle m\rangle\to L_{w_{i,j}}\langle m\rangle\to 0,
\end{displaymath}
we get the exact sequence
\begin{multline*}
0\to  \mathrm{hom}(L_{w_{i,j}}\langle m\rangle, \Delta_{w})\to
\mathrm{hom}(\Delta_{w_{i,j}}\langle m\rangle, \Delta_{w})\to
\mathrm{hom}(\mathrm{Ker}\langle m\rangle, \Delta_{w})\to \\ \to
\mathrm{ext}^1(L_{w_{i,j}}\langle m\rangle, \Delta_{w})\to
\mathrm{ext}^1(\Delta_{w_{i,j}}\langle m\rangle, \Delta_{w}).
\end{multline*}
Here $\mathrm{hom}(L_{w_{i,j}}\langle m\rangle, \Delta_{w})=0$  since
$w_{i,j}\neq w_0$. 

The above implies that the map
\begin{displaymath}
\mathrm{hom}(\Delta_{w_{i,j}}\langle m\rangle, \Delta_{w})\to
\mathrm{hom}(\mathrm{Ker}\langle m\rangle, \Delta_{w})
\end{displaymath}
is an inclusion. Under the assumption $w_{i,j}\geq w$, this map,
in fact, is an isomorphism. Indeed, we even have
\begin{displaymath}
\mathbb{C}\cong\mathrm{Hom}_{\mathcal{O}}(\Delta_{w_{i,j}}, \Delta_{w})\cong
\mathrm{Hom}_{\mathcal{O}}(\mathrm{Ker}, \Delta_{w}).
\end{displaymath}
Here the first isomorphism is a consequence of $w_{i,j}\geq w$ while the second
one is given by the restriction together with the fact that the last 
hom-space is one-dimensional since both involved modules have isomorphic 
simple socle which, moreover, has multiplicity one in both modules.

This implies that the map 
\begin{displaymath}
\mathrm{ext}^1(L_{w_{i,j}}\langle m\rangle, \Delta_{w})\to
\mathrm{ext}^1(\Delta_{w_{i,j}}\langle m\rangle, \Delta_{w})
\end{displaymath}
is injective. Since $\mathrm{ext}^1(L_{w_{i,j}}\langle m\rangle, \Delta_{w})\neq 0$
by \cite[Corollary~2]{KMM1}, the claim follows.
\end{proof}

We note that the non-zero extension in
$\mathrm{ext}^1(\Delta_{w_{i,j}}\langle m\rangle,\Delta_{w})$ produced by
Proposition~\ref{prop6.4-1} is expected if and only if $-m=\ell(w_{i,j})-2$.
In all other cases, we have an additional extension.

\begin{corollary}\label{cor6.4-2}
For $i\in\{1,2,\dots,n-1\}$,  consider the element $w_{i,n-i}=s_iw_0$.
Let $w\in S_n$ and let $m = 2- (\ell(w_{i,n-i})-\ell(w))$ (the expected degree of  $ext^1$ between $\Delta_{w_{i,n-i}}$ and $\Delta_{w}$). 
\begin{enumerate}[$($a$)$]
\item\label{cor6.4-2.1} If $\Phi_{i,n-i}\inv (L_{w_{i,n-i}}\langle m'\rangle )\not\in BM_w$ for all $m'\neq m$, then we have
\[\dim\mathrm{Ext}^1_{\mathcal{O}}(\Delta_{w_{i,n-i}},\Delta_{w})=
\dim\mathrm{ext}^1(\Delta_{w_{i,n-i}}\langle  m\rangle,\Delta_{w})=|r^{(m)}_{w_{i,n-i},w}|.\]
\item\label{cor6.4-2.2} 
If 
$\Phi_{i,n-i}\inv (L_{w_{i,n-i}}\langle m' \rangle )\in BM_w$ for some $m'\neq m$, then we have
\begin{displaymath}
\mathrm{Ext}^1_{\mathcal{O}}(\Delta_{w_{i,n-i}},\Delta_{w})=
\mathrm{ext}^1(\Delta_{w_{i,n-i}}\langle m'\rangle,\Delta_{w})
\oplus 
\mathrm{ext}^1(\Delta_{w_{i,n-i}}\langle m\rangle,\Delta_{w}).
\end{displaymath}
Moreover, we have $\mathrm{ext}^1(\Delta_{w_{i,n-i}}\langle m'\rangle,\Delta_{w})\cong\mathbb{C}$, while 
$\dim\mathrm{ext}^1(\Delta_{w_{i,n-i}}\langle m\rangle,\Delta_{w})=|r^{(m)}_{w_{i,n-i},w}|$.
\end{enumerate}
\end{corollary}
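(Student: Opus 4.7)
The plan is to decompose $\mathrm{Ext}^1_{\mathcal{O}}(\Delta_{w_{i,n-i}},\Delta_w)=\bigoplus_k \mathrm{ext}^1(\Delta_{w_{i,n-i}}\langle k\rangle,\Delta_w)$ and analyze each graded shift separately. By Corollary~\ref{cor.shiftbound}, only finitely many shifts $k$ of the correct parity can contribute. The expected shift $k=m$ is handled directly by Corollary~\ref{cor4.4-2}, giving $\dim\mathrm{ext}^1(\Delta_{w_{i,n-i}}\langle m\rangle,\Delta_w)=|r^{(m)}_{w_{i,n-i},w}|$; this is the $R$-polynomial contribution appearing in both parts.

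The central structural observation is that $w_{i,n-i}=s_iw_0$ is covered only by $w_0$ in the Bruhat order. Consequently, the only composition factors of $\Delta_{s_iw_0}$ are $L_{s_iw_0}$ (once, at the top) and $L_{w_0}=\Delta_{w_0}$ (with graded multiplicities determined by a KL polynomial). Since the relevant $\mathrm{ext}^1$ among graded shifts of $L_{w_0}$ all vanish (as visible from the second Loewy layer of $P_{w_0}$, which contains no $L_{w_0}$), the graded radical $K$ of $\Delta_{s_iw_0}$ is a direct sum of graded shifts of $\Delta_{w_0}$.

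For any additional shift $m'\neq m$, applying $\mathrm{hom}(-,\Delta_w)$ to the short exact sequence $0\to K\to\Delta_{s_iw_0}\langle m'\rangle\to L_{s_iw_0}\langle m'\rangle\to 0$ and arguing as in the proof of Proposition~\ref{prop6.4-1}, we obtain an injection $\mathrm{ext}^1(L_{s_iw_0}\langle m'\rangle,\Delta_w)\hookrightarrow\mathrm{ext}^1(\Delta_{s_iw_0}\langle m'\rangle,\Delta_w)$. By \cite[Corollary~2]{KMM1}, the left-hand side has dimension $1$ precisely when $\Phi_{i,n-i}^{-1}(L_{s_iw_0}\langle m'\rangle)\in BM_w$ and vanishes otherwise; this provides the required lower bound for the ext in both parts.

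For the matching upper bound, I will continue the long exact sequence: the cokernel of the injection above embeds into $\mathrm{ext}^1(K\langle m'\rangle,\Delta_w)$, itself a direct sum of Verma-Verma $\mathrm{ext}^1$ spaces from $\Delta_{w_0}$ to $\Delta_w$. The combinatorial task is to verify that, for $m'\neq m$, no such term contributes at shift $m'$, by matching the graded shifts in the decomposition of $K$ with the expected shifts for the pair $(w_0,w)$. This is the main obstacle: precisely tracking the graded shifts appearing in $K$ (which reflect a KL polynomial for $s_iw_0$) and confirming that they do not produce a spurious extension at the additional degree $m'$. Once established, the injection is an isomorphism, and combining the $R$-polynomial term at $m$ with the zero or one additional dimension at $m'$ yields both parts (a) and (b).
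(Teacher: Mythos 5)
Your plan is in the right spirit and mirrors the paper's own proof at the level of structure: you use the same short exact sequence with kernel built from $\Delta_{w_0}$, the same injection coming from the proof of Proposition~\ref{prop6.4-1}, and you correctly identify that the whole argument hinges on the vanishing of the $\Delta_{w_0}$-to-$\Delta_w$ contribution at the additional degree. But there are two issues.

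First, you overcomplicate the kernel. Since $w_{i,n-i}=s_iw_0$ has $\ell(w_0)-\ell(s_iw_0)=1$, the relevant Kazhdan--Lusztig polynomial is trivial and the radical of $\Delta_{s_iw_0}$ is a \emph{single} copy $\Delta_{w_0}\langle -1\rangle$; there is no family of shifts to track and no ``matching of graded shifts'' to perform. The short exact sequence is exactly
$0\to \Delta_{w_0}\langle -1\rangle\to \Delta_{s_iw_0}\to L_{s_iw_0}\to 0$,
so the third term in the long exact sequence is just $\mathrm{ext}^1(\Delta_{w_0}\langle m'-1\rangle,\Delta_w)$.

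Second, and this is the genuine gap, you explicitly flag the vanishing of this term for $m'\neq m$ as ``the main obstacle'' and leave it unresolved. It is not a routine combinatorial check: the fact you need is that \emph{every} first extension from $\Delta_{w_0}$ to any Verma module is expected, i.e.\ occurs only at the single expected graded shift. This is a nontrivial theorem, cited in the paper as \cite[Theorem~32]{Ma}, and it is exactly what forces $\mathrm{ext}^1(\Delta_{w_0}\langle m'-1\rangle,\Delta_w)=0$ whenever $m'\neq m$. Without invoking this result your argument does not close: the upper bound is precisely what you have not established. With it, your outline (lower bound from the embedding of $\mathrm{ext}^1(L_{s_iw_0}\langle m'\rangle,\Delta_w)$, upper bound from the vanishing of the cokernel's target, together with \cite[Corollary~2]{KMM1} for the dimension of the simple-to-Verma extension, and the graded Delorme formula / Corollary~\ref{cor4.4-2} for the expected degree) becomes the paper's proof.
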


\begin{proof}
Since $w_{i,n-i}=s_iw_0$, we have the exact sequence
\begin{displaymath}
0\to  \Delta_{w_0}\langle -1\rangle\to 
\Delta_{w_{i,n-i}}\to L_{w_{i,n-i}}\to 0.
\end{displaymath}
It induces the exact sequence
\begin{displaymath}
\mathrm{Ext}^1_{\mathcal{O}}(L_{w_{i,n-i}}, \Delta_{w})\to
\mathrm{Ext}^1_{\mathcal{O}}(\Delta_{w_{i,n-i}}, \Delta_{w})\to
\mathrm{Ext}^1_{\mathcal{O}}(\Delta_{w_{0}}, \Delta_{w}).
\end{displaymath}
By \cite[Theorem~32]{Ma}, any element in $\mathrm{Ext}^1_{\mathcal{O}}(\Delta_{w_{0}}, \Delta_{w})$
is expected. By \cite[Corollary~2]{KMM1}, the dimension of 
$\mathrm{Ext}^1_{\mathcal{O}}(L_{w_{i,n-i}}, \Delta_{w})$ is at most $1$.
This naturally splits our consideration in two cases: 
if $-m=\ell(w_{i,j})-2$ and $-m\neq \ell(w_{i,j})-2$. In the first case, all
first extensions are expected and thus given by formula Formula~\ref{eq4.4-5}.
This is exactly Claim~\eqref{cor6.4-2.1}.

Now, assume that $-m\neq \ell(w_{i,j})-2$.
The proof of Proposition~\ref{prop6.4-1} constructs an embedding from
$\mathrm{ext}^1(L_{w_{i,n-i}}\langle m\rangle, \Delta_{w})$
to $\mathrm{ext}^1(\Delta_{w_{i,n-i}}\langle m\rangle, \Delta_{w})$.
We have the vanishing $\mathrm{ext}^1(\Delta_{w_{0}}\langle m-1\rangle, \Delta_{w})=0$ by
\cite[Theorem~32]{Ma}. This implies that the above embedding is, in fact, an isomorphism.
Now Claim~\eqref{cor6.4-2.2} follows from \cite[Corollary~2]{KMM1} and the
observation that all expected extensions are given by Formula~\ref{eq4.4-5}.
\end{proof}

\begin{example}\label{example6.4-3}
{\rm 
Consider $S_{2n}$, for $n>2$, and $w_{n,n}=s_nw_0$. Let $w=s_n$, which is
bigrassmannian. Then the socle of the module $\Delta_e/\Delta_{s_n}$  is ismorphic to the module
$L_{w_{n,n}}\langle -(n(2n-1)-1-2(n-1))\rangle$. Hence 
$\mathrm{Ext}^1_{\mathcal{O}}(\Delta_{w_{i,j}},\Delta_{w})$ has the expected part
of dimension $2n-1$ (corresponding to 
$\mathrm{ext}^1(\Delta_{w_{n,n}}\langle -(n(2n-1)-3)\rangle,\Delta_{s_2})$) 
and also the one-dimensional additional part 
$\mathrm{ext}^1(\Delta_{w_{n,n}}\langle -(n(2n-1)-1-2(n-1))\rangle,\Delta_{s_n})$.
}
\end{example}

\subsection{Some additional higher extensions}\label{s6.5}

\hk{This section presents a few ways to find additional, in the sense of Subsection~\ref{s4.5}, higher extensions. }

We start with the first type of examples observed (in an ungraded setting) by Boe \cite{Bo} in disproving the Gabber-Joseph conjecture.
\begin{example}
If the coefficients $r_{x,y}^{(k)}$ do not alternate in sign, then Theorem~\ref{thm3.2-1} implies that there is an additional extension between $\Delta_x$ and $\Delta_y$. (Here we cannot determine the $i$ such that there are additional $i$-th extensions.) Computer computation of $r_{x,y}$ provides many such examples. We record two here: 

In type $D_4$, the coefficients of  $r_{e,w_0}$ are 
\[[1, -4, 7, -8, 6, 0, -4, 0, 6, -8, 7, -4, 1].\]

The coefficients of $r_{e,w_0}$ in type $E_7$ are
\[[-1, 7, -22, 42, -57, 63, -65, 71, -87, 113, -137, 127, -55, -47, 111, -137, 173,\] 
\[-171,23, 223, -399, 505, -708, 1052, -1396, 1580, -1530, 1302, -984, 456, 430,\]
\[-1250, 1250, 
-430, -456, 984, -1302, 1530, -1580, 1396, -1052, 708, -505, 399,\]
\[-223, -23, 171, -173,137, -111, 47, 55, -127, 137, -113, 87, -71,65, -63, 57,\] 
\[ -42, 22, -7, 1].\] 
\end{example}

The second type of examples are also, more or less, combinatorial, this time depending on computations of KL polynomials rather than $R$-polynomials and using Proposition~\ref{klkl} rather than Theorem~\ref{thm3.2-1}.
\begin{example}
Let $W$ be of $B_3$ with the labeling $\begin{tikzpicture}[scale=0.4,baseline=-3]
\protect\draw (0 cm,0) -- (-2 cm,0);
\protect\draw (-2 cm,0.1cm) -- (-4 cm,0.1 cm);
\protect\draw (-2 cm,-0.1cm) -- (-4 cm,-0.1 cm);
\protect\draw[fill=white] (0 cm, 0 cm) circle (.15cm) node[above=1pt]{\scriptsize $2$};
\protect\draw[fill=white] (-2 cm, 0 cm) circle (.15cm) node[above=1pt]{\scriptsize $1$};
\protect\draw[fill=white] (-4 cm, 0 cm) circle (.15cm) node[above=1pt]{\scriptsize $0$};
\end{tikzpicture}$.
We claim that 
\begin{enumerate}
    \item either $\Ext^2(\Delta_{w_0},\Delta_e)$ or $\Ext^2(\Delta_{w_0},\Delta_{s_0})$ contains additional extensions;
    \item either $\Ext^3(\Delta_{w_0},\Delta_e)$ or $\Ext^3(\Delta_{w_0},\Delta_{s_0})$ contains additional extensions;
    \item either $\Ext^4(\Delta_{w_0},\Delta_e)$ or $\Ext^4(\Delta_{w_0},\Delta_{s_0})$ contains additional extensions.
\end{enumerate}
A computation of KL polynomials, together with Proposition~\ref{klkl}, shows that the dimensions of
$\hom(\Delta_{w_0}\langle b\rangle,\mathcal{T}_{a}(\Delta_e)) $ and $\hom(\Delta_{w_0}\langle b\rangle,\mathcal{T}_{a-1}(\Delta_{s_0}))$ are as in Figure~\ref{B3fig}. Recall that the embedding $\mathcal{T}_{\bullet-1}(\Delta_{s_0})\to \mathcal{T}_{\bullet}(\Delta_{e})$ induces an embedding 
\[\iota:\hom(\Delta_{w_0}\langle b\rangle,\mathcal{T}_{\bullet-1}(\Delta_{s_0}))\to \hom(\Delta_{w_0}\langle b\rangle,\mathcal{T}_{\bullet}(\Delta_e)).\] 
The claims additional extensions arise from the violet (i.e., additional) coordinates $(a,b)$ where the dimension difference at $(a-1,b)$ is greater than the difference at $(a,b)$ in Figure~\ref{B3fig}.  
We explain the details for the Claim (1). The same argument applies to the other claims.

Suppose the second extension between $\Delta_{w_0}$ and $\Delta_e$ is expected. Then the map \[d\circ - :\hom(\Delta_{w_0}\langle -1\rangle,\mathcal{T}_{2}(\Delta_e)) \to \hom(\Delta_{w_0}\langle -1\rangle,\mathcal{T}_{3}(\Delta_e))\] is injective, where $d$ is the (relevant restriction of) differential of $\T(\Delta_e)$. Let $V$ be its 3-dimensional image in $\hom(\Delta_{w_0}\langle -1\rangle,\mathcal{T}_{3}(\Delta_e))$. 
The embedding $\iota$ restricts to an isomorphism 
\[\iota:\hom(\Delta_{w_0}\langle -1\rangle,\mathcal{T}_{3-1}(\Delta_{s_0}))\xrightarrow[]{\simeq} \hom(\Delta_{w_0}\langle -1\rangle,\mathcal{T}_{3}(\Delta_e)),\] also denoted by $\iota$, since both spaces are of dimension $7$.
We obtain a 3-dimensional subspace $\iota^{-1}(V)$ in the 7-dimensional space $\hom(\Delta_{w_0}\langle -1\rangle,\mathcal{T}_{3-1}(\Delta_{s_0}))$ which corresponds to morphisms of complexes from $\Delta_{w_0}\langle-1\rangle$ to $\T(\Delta_{s_0})$. At most (in fact exactly) two dimensional subspace of $\iota^{-1}$ is homotopic to zero, because the dimension of at $(2,-1)$ is two. Thus the rest contributes to a nonzero element in $\ext^2(\Delta_{w_0}\langle-1\rangle,\Delta_{s_0})$ which is additional. \qed

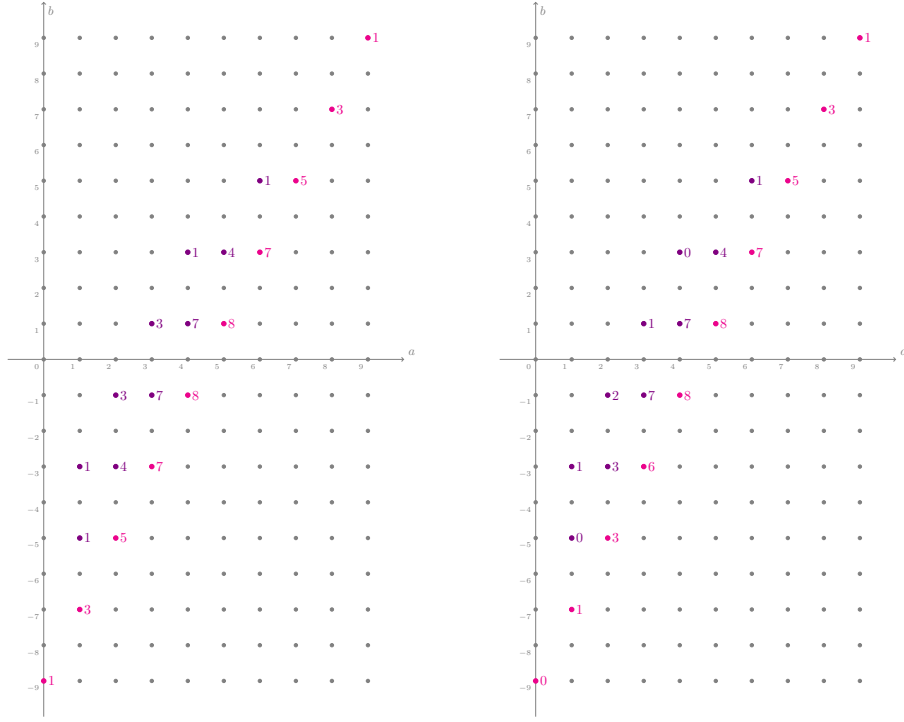
\begin{figure}
\resizebox{12cm}{!}{
\begin{tikzpicture}
\draw[gray, thin,  ->] (-1,0) -- (10,0) node[anchor=south west] {$a$};
\draw[gray, thin,  ->] (0,-10) -- (0,10) node[anchor=north west] {$b$};
\draw[gray,fill=gray] (0,0) circle (.3ex) node[anchor=north east] {\color{gray}\tiny$0$};
\draw[gray,fill=gray] (1,0) circle (.3ex) node[anchor=north east] {\color{gray}\tiny$1$};
\draw[gray,fill=gray] (2,0) circle (.3ex) node[anchor=north east] {\color{gray}\tiny$2$};
\draw[gray,fill=gray] (3,0) circle (.3ex) node[anchor=north east] {\color{gray}\tiny$3$};
\draw[gray,fill=gray] (4,0) circle (.3ex) node[anchor=north east] {\color{gray}\tiny$4$};
\draw[gray,fill=gray] (5,0) circle (.3ex) node[anchor=north east] {\color{gray}\tiny$5$};
\draw[gray,fill=gray] (6,0) circle (.3ex) node[anchor=north east] {\color{gray}\tiny$6$};
\draw[gray,fill=gray] (7,0) circle (.3ex) node[anchor=north east] {\color{gray}\tiny$7$};
\draw[gray,fill=gray] (8,0) circle (.3ex) node[anchor=north east] {\color{gray}\tiny$8$};
\draw[gray,fill=gray] (9,0) circle (.3ex) node[anchor=north east] {\color{gray}\tiny$9$};
%\draw[gray,fill=gray] (10,0) circle (.3ex) node[anchor=north east] {\color{gray}\tiny$10$};
\draw[gray,fill=gray] (0,1) circle (.3ex) node[anchor=north east] {\color{gray}\tiny$1$};
\draw[gray,fill=gray] (1,1) circle (.3ex);
\draw[gray,fill=gray] (2,1) circle (.3ex);
\draw[gray,fill=gray] (3,1) circle (.3ex);
\draw[gray,fill=gray] (4,1) circle (.3ex);
\draw[gray,fill=gray] (5,1) circle (.3ex);
\draw[gray,fill=gray] (6,1) circle (.3ex);
\draw[gray,fill=gray] (7,1) circle (.3ex);
\draw[gray,fill=gray] (8,1) circle (.3ex);
\draw[gray,fill=gray] (9,1) circle (.3ex);
\draw[gray,fill=gray] (0,2) circle (.3ex) node[anchor=north east] {\color{gray}\tiny$2$};
\draw[gray,fill=gray] (1,2) circle (.3ex);
\draw[gray,fill=gray] (2,2) circle (.3ex);
\draw[gray,fill=gray] (3,2) circle (.3ex);
\draw[gray,fill=gray] (4,2) circle (.3ex);
\draw[gray,fill=gray] (5,2) circle (.3ex);
\draw[gray,fill=gray] (6,2) circle (.3ex);
\draw[gray,fill=gray] (7,2) circle (.3ex);
\draw[gray,fill=gray] (8,2) circle (.3ex);
\draw[gray,fill=gray] (9,2) circle (.3ex);
\draw[gray,fill=gray] (0,3) circle (.3ex) node[anchor=north east] {\color{gray}\tiny$3$};
\draw[gray,fill=gray] (1,3) circle (.3ex);
\draw[gray,fill=gray] (2,3) circle (.3ex);
\draw[gray,fill=gray] (3,3) circle (.3ex);
\draw[gray,fill=gray] (4,3) circle (.3ex);
\draw[gray,fill=gray] (5,3) circle (.3ex);
\draw[gray,fill=gray] (6,3) circle (.3ex);
\draw[gray,fill=gray] (7,3) circle (.3ex);
\draw[gray,fill=gray] (8,3) circle (.3ex);
\draw[gray,fill=gray] (9,3) circle (.3ex);
\draw[gray,fill=gray] (0,4) circle (.3ex) node[anchor=north east] {\color{gray}\tiny$4$};
\draw[gray,fill=gray] (1,4) circle (.3ex);
\draw[gray,fill=gray] (2,4) circle (.3ex);
\draw[gray,fill=gray] (3,4) circle (.3ex);
\draw[gray,fill=gray] (4,4) circle (.3ex);
\draw[gray,fill=gray] (5,4) circle (.3ex);
\draw[gray,fill=gray] (6,4) circle (.3ex);
\draw[gray,fill=gray] (7,4) circle (.3ex);
\draw[gray,fill=gray] (8,4) circle (.3ex);
\draw[gray,fill=gray] (9,4) circle (.3ex);
\draw[gray,fill=gray] (0,5) circle (.3ex) node[anchor=north east] {\color{gray}\tiny$5$};
\draw[gray,fill=gray] (1,5) circle (.3ex);
\draw[gray,fill=gray] (2,5) circle (.3ex);
\draw[gray,fill=gray] (3,5) circle (.3ex);
\draw[gray,fill=gray] (4,5) circle (.3ex);
\draw[gray,fill=gray] (5,5) circle (.3ex);
\draw[gray,fill=gray] (6,5) circle (.3ex);
\draw[gray,fill=gray] (7,5) circle (.3ex);
\draw[gray,fill=gray] (8,5) circle (.3ex);
\draw[gray,fill=gray] (9,5) circle (.3ex);
\draw[gray,fill=gray] (0,6) circle (.3ex) node[anchor=north east] {\color{gray}\tiny$6$};
\draw[gray,fill=gray] (1,6) circle (.3ex);
\draw[gray,fill=gray] (2,6) circle (.3ex);
\draw[gray,fill=gray] (3,6) circle (.3ex);
\draw[gray,fill=gray] (4,6) circle (.3ex);
\draw[gray,fill=gray] (5,6) circle (.3ex);
\draw[gray,fill=gray] (6,6) circle (.3ex);
\draw[gray,fill=gray] (7,6) circle (.3ex);
\draw[gray,fill=gray] (8,6) circle (.3ex);
\draw[gray,fill=gray] (9,6) circle (.3ex);
\draw[gray,fill=gray] (0,7) circle (.3ex) node[anchor=north east] {\color{gray}\tiny$7$};
\draw[gray,fill=gray] (1,7) circle (.3ex);
\draw[gray,fill=gray] (2,7) circle (.3ex);
\draw[gray,fill=gray] (3,7) circle (.3ex);
\draw[gray,fill=gray] (4,7) circle (.3ex);
\draw[gray,fill=gray] (5,7) circle (.3ex);
\draw[gray,fill=gray] (6,7) circle (.3ex);
\draw[gray,fill=gray] (7,7) circle (.3ex);
\draw[gray,fill=gray] (8,7) circle (.3ex);
\draw[gray,fill=gray] (9,7) circle (.3ex);
\draw[gray,fill=gray] (0,8) circle (.3ex) node[anchor=north east] {\color{gray}\tiny$8$};
\draw[gray,fill=gray] (1,8) circle (.3ex);
\draw[gray,fill=gray] (2,8) circle (.3ex);
\draw[gray,fill=gray] (3,8) circle (.3ex);
\draw[gray,fill=gray] (4,8) circle (.3ex);
\draw[gray,fill=gray] (5,8) circle (.3ex);
\draw[gray,fill=gray] (6,8) circle (.3ex);
\draw[gray,fill=gray] (7,8) circle (.3ex);
\draw[gray,fill=gray] (8,8) circle (.3ex);
\draw[gray,fill=gray] (9,8) circle (.3ex);
\draw[gray,fill=gray] (0,9) circle (.3ex) node[anchor=north east] {\color{gray}\tiny$9$};
\draw[gray,fill=gray] (1,9) circle (.3ex);
\draw[gray,fill=gray] (2,9) circle (.3ex);
\draw[gray,fill=gray] (3,9) circle (.3ex);
\draw[gray,fill=gray] (4,9) circle (.3ex);
\draw[gray,fill=gray] (5,9) circle (.3ex);
\draw[gray,fill=gray] (6,9) circle (.3ex);
\draw[gray,fill=gray] (7,9) circle (.3ex);
\draw[gray,fill=gray] (8,9) circle (.3ex);
\draw[gray,fill=gray] (9,9) circle (.3ex);
\draw[gray,fill=gray] (0,-1) circle (.3ex) node[anchor=north east] {\color{gray}\tiny$-1$};
\draw[gray,fill=gray] (1,-1) circle (.3ex);
\draw[gray,fill=gray] (2,-1) circle (.3ex);
\draw[gray,fill=gray] (3,-1) circle (.3ex);
\draw[gray,fill=gray] (4,-1) circle (.3ex);
\draw[gray,fill=gray] (5,-1) circle (.3ex);
\draw[gray,fill=gray] (6,-1) circle (.3ex);
\draw[gray,fill=gray] (7,-1) circle (.3ex);
\draw[gray,fill=gray] (8,-1) circle (.3ex);
\draw[gray,fill=gray] (9,-1) circle (.3ex);
\draw[gray,fill=gray] (0,-2) circle (.3ex) node[anchor=north east] {\color{gray}\tiny$-2$};
\draw[gray,fill=gray] (1,-2) circle (.3ex);
\draw[gray,fill=gray] (2,-2) circle (.3ex);
\draw[gray,fill=gray] (3,-2) circle (.3ex);
\draw[gray,fill=gray] (4,-2) circle (.3ex);
\draw[gray,fill=gray] (5,-2) circle (.3ex);
\draw[gray,fill=gray] (6,-2) circle (.3ex);
\draw[gray,fill=gray] (7,-2) circle (.3ex);
\draw[gray,fill=gray] (8,-2) circle (.3ex);
\draw[gray,fill=gray] (9,-2) circle (.3ex);
\draw[gray,fill=gray] (0,-3) circle (.3ex) node[anchor=north east] {\color{gray}\tiny$-3$};
\draw[gray,fill=gray] (1,-3) circle (.3ex);
\draw[gray,fill=gray] (2,-3) circle (.3ex);
\draw[gray,fill=gray] (3,-3) circle (.3ex);
\draw[gray,fill=gray] (4,-3) circle (.3ex);
\draw[gray,fill=gray] (5,-3) circle (.3ex);
\draw[gray,fill=gray] (6,-3) circle (.3ex);
\draw[gray,fill=gray] (7,-3) circle (.3ex);
\draw[gray,fill=gray] (8,-3) circle (.3ex);
\draw[gray,fill=gray] (9,-3) circle (.3ex);
\draw[gray,fill=gray] (0,-4) circle (.3ex) node[anchor=north east] {\color{gray}\tiny$-4$};
\draw[gray,fill=gray] (1,-4) circle (.3ex);
\draw[gray,fill=gray] (2,-4) circle (.3ex);
\draw[gray,fill=gray] (3,-4) circle (.3ex);
\draw[gray,fill=gray] (4,-4) circle (.3ex);
\draw[gray,fill=gray] (5,-4) circle (.3ex);
\draw[gray,fill=gray] (6,-4) circle (.3ex);
\draw[gray,fill=gray] (7,-4) circle (.3ex);
\draw[gray,fill=gray] (8,-4) circle (.3ex);
\draw[gray,fill=gray] (9,-4) circle (.3ex);
\draw[gray,fill=gray] (0,-5) circle (.3ex) node[anchor=north east] {\color{gray}\tiny$-5$};
\draw[gray,fill=gray] (1,-5) circle (.3ex);
\draw[gray,fill=gray] (2,-5) circle (.3ex);
\draw[gray,fill=gray] (3,-5) circle (.3ex);
\draw[gray,fill=gray] (4,-5) circle (.3ex);
\draw[gray,fill=gray] (5,-5) circle (.3ex);
\draw[gray,fill=gray] (6,-5) circle (.3ex);
\draw[gray,fill=gray] (7,-5) circle (.3ex);
\draw[gray,fill=gray] (8,-5) circle (.3ex);
\draw[gray,fill=gray] (9,-5) circle (.3ex);
\draw[gray,fill=gray] (0,-6) circle (.3ex) node[anchor=north east] {\color{gray}\tiny$-6$};
\draw[gray,fill=gray] (1,-6) circle (.3ex);
\draw[gray,fill=gray] (2,-6) circle (.3ex);
\draw[gray,fill=gray] (3,-6) circle (.3ex);
\draw[gray,fill=gray] (4,-6) circle (.3ex);
\draw[gray,fill=gray] (5,-6) circle (.3ex);
\draw[gray,fill=gray] (6,-6) circle (.3ex);
\draw[gray,fill=gray] (7,-6) circle (.3ex);
\draw[gray,fill=gray] (8,-6) circle (.3ex);
\draw[gray,fill=gray] (9,-6) circle (.3ex);
\draw[gray,fill=gray] (1,-7) circle (.3ex);
\draw[gray,fill=gray] (2,-7) circle (.3ex);
\draw[gray,fill=gray] (3,-7) circle (.3ex);
\draw[gray,fill=gray] (4,-7) circle (.3ex);
\draw[gray,fill=gray] (5,-7) circle (.3ex);
\draw[gray,fill=gray] (6,-7) circle (.3ex);
\draw[gray,fill=gray] (7,-7) circle (.3ex);
\draw[gray,fill=gray] (8,-7) circle (.3ex);
\draw[gray,fill=gray] (9,-7) circle (.3ex);
\draw[gray,fill=gray] (0,-7) circle (.3ex) node[anchor=north east] {\color{gray}\tiny$-7$};
\draw[gray,fill=gray] (1,-8) circle (.3ex);
\draw[gray,fill=gray] (2,-8) circle (.3ex);
\draw[gray,fill=gray] (3,-8) circle (.3ex);
\draw[gray,fill=gray] (4,-8) circle (.3ex);
\draw[gray,fill=gray] (5,-8) circle (.3ex);
\draw[gray,fill=gray] (6,-8) circle (.3ex);
\draw[gray,fill=gray] (7,-8) circle (.3ex);
\draw[gray,fill=gray] (8,-8) circle (.3ex);
\draw[gray,fill=gray] (9,-8) circle (.3ex);
\draw[gray,fill=gray] (0,-8) circle (.3ex) node[anchor=north east] {\color{gray}\tiny$-8$};
\draw[gray,fill=gray] (1,-9) circle (.3ex);
\draw[gray,fill=gray] (2,-9) circle (.3ex);
\draw[gray,fill=gray] (3,-9) circle (.3ex);
\draw[gray,fill=gray] (4,-9) circle (.3ex);
\draw[gray,fill=gray] (5,-9) circle (.3ex);
\draw[gray,fill=gray] (6,-9) circle (.3ex);
\draw[gray,fill=gray] (7,-9) circle (.3ex);
\draw[gray,fill=gray] (8,-9) circle (.3ex);
\draw[gray,fill=gray] (9,-9) circle (.3ex);
\draw[gray,fill=gray] (0,-9) circle (.3ex) node[anchor=north east] {\color{gray}\tiny$-9$};
\draw[magenta,fill=magenta] (0,-9) circle (.4ex) node[anchor=west] {\color{magenta}\large$1$};
\draw[magenta,fill=magenta] (1,-7) circle (.4ex) node[anchor=west] {\color{magenta}\large$3$};
\draw[magenta,fill=magenta] (2,-5) circle (.4ex) node[anchor=west] {\color{magenta}\large$5$};
\draw[magenta,fill=magenta] (3,-3) circle (.4ex) node[anchor=west] {\color{magenta}\large$7$};
\draw[magenta,fill=magenta] (4,-1) circle (.4ex) node[anchor=west] {\color{magenta}\large$8$};
\draw[magenta,fill=magenta] (5,1) circle (.4ex) node[anchor=west] {\color{magenta}\large$8$};
\draw[magenta,fill=magenta] (6,3) circle (.4ex) node[anchor=west] {\color{magenta}\large$7$};
\draw[magenta,fill=magenta] (7,5) circle (.4ex) node[anchor=west] {\color{magenta}\large$5$};
\draw[magenta,fill=magenta] (8,7) circle (.4ex) node[anchor=west] {\color{magenta}\large$3$};
\draw[magenta,fill=magenta] (9,9) circle (.4ex) node[anchor=west] {\color{magenta}\large$1$};

\draw[violet,fill=violet]   (1,-5) circle (.4ex) node[anchor=west] {\color{violet}\large$1$};
\draw[violet,fill=violet]   (1,-3) circle (.4ex) node[anchor=west] {\color{violet}\large$1$};
\draw[violet,fill=violet]   (2,-3) circle (.4ex) node[anchor=west] {\color{violet}\large$4$};
\draw[violet,fill=violet]   (2,-1) circle (.4ex) node[anchor=west] {\color{violet}\large$3$};
\draw[violet,fill=violet]   (3,-1) circle (.4ex) node[anchor=west] {\color{violet}\large$7$};
\draw[violet,fill=violet]   (3,1) circle (.4ex) node[anchor=west] {\color{violet}\large$3$};
\draw[violet,fill=violet]   (4,1) circle (.4ex) node[anchor=west] {\color{violet}\large$7$};
\draw[violet,fill=violet]   (4,3) circle (.4ex) node[anchor=west] {\color{violet}\large$1$};
\draw[violet,fill=violet]   (5,3) circle (.4ex) node[anchor=west] {\color{violet}\large$4$};
\draw[violet,fill=violet]   (6,5) circle (.4ex) node[anchor=west] {\color{violet}\large$1$};
\end{tikzpicture}
\quad\quad\quad\quad\quad\quad
\begin{tikzpicture}
\draw[gray, thin,  ->] (-1,0) -- (10,0) node[anchor=south west] {$a$};
\draw[gray, thin,  ->] (0,-10) -- (0,10) node[anchor=north west] {$b$};
\draw[gray,fill=gray] (0,0) circle (.3ex) node[anchor=north east] {\color{gray}\tiny$0$};
\draw[gray,fill=gray] (1,0) circle (.3ex) node[anchor=north east] {\color{gray}\tiny$1$};
\draw[gray,fill=gray] (2,0) circle (.3ex) node[anchor=north east] {\color{gray}\tiny$2$};
\draw[gray,fill=gray] (3,0) circle (.3ex) node[anchor=north east] {\color{gray}\tiny$3$};
\draw[gray,fill=gray] (4,0) circle (.3ex) node[anchor=north east] {\color{gray}\tiny$4$};
\draw[gray,fill=gray] (5,0) circle (.3ex) node[anchor=north east] {\color{gray}\tiny$5$};
\draw[gray,fill=gray] (6,0) circle (.3ex) node[anchor=north east] {\color{gray}\tiny$6$};
\draw[gray,fill=gray] (7,0) circle (.3ex) node[anchor=north east] {\color{gray}\tiny$7$};
\draw[gray,fill=gray] (8,0) circle (.3ex) node[anchor=north east] {\color{gray}\tiny$8$};
\draw[gray,fill=gray] (9,0) circle (.3ex) node[anchor=north east] {\color{gray}\tiny$9$};
%\draw[gray,fill=gray] (10,0) circle (.3ex) node[anchor=north east] {\color{gray}\tiny$10$};
\draw[gray,fill=gray] (0,1) circle (.3ex) node[anchor=north east] {\color{gray}\tiny$1$};
\draw[gray,fill=gray] (1,1) circle (.3ex);
\draw[gray,fill=gray] (2,1) circle (.3ex);
\draw[gray,fill=gray] (3,1) circle (.3ex);
\draw[gray,fill=gray] (4,1) circle (.3ex);
\draw[gray,fill=gray] (5,1) circle (.3ex);
\draw[gray,fill=gray] (6,1) circle (.3ex);
\draw[gray,fill=gray] (7,1) circle (.3ex);
\draw[gray,fill=gray] (8,1) circle (.3ex);
\draw[gray,fill=gray] (9,1) circle (.3ex);
\draw[gray,fill=gray] (0,2) circle (.3ex) node[anchor=north east] {\color{gray}\tiny$2$};
\draw[gray,fill=gray] (1,2) circle (.3ex);
\draw[gray,fill=gray] (2,2) circle (.3ex);
\draw[gray,fill=gray] (3,2) circle (.3ex);
\draw[gray,fill=gray] (4,2) circle (.3ex);
\draw[gray,fill=gray] (5,2) circle (.3ex);
\draw[gray,fill=gray] (6,2) circle (.3ex);
\draw[gray,fill=gray] (7,2) circle (.3ex);
\draw[gray,fill=gray] (8,2) circle (.3ex);
\draw[gray,fill=gray] (9,2) circle (.3ex);
\draw[gray,fill=gray] (0,3) circle (.3ex) node[anchor=north east] {\color{gray}\tiny$3$};
\draw[gray,fill=gray] (1,3) circle (.3ex);
\draw[gray,fill=gray] (2,3) circle (.3ex);
\draw[gray,fill=gray] (3,3) circle (.3ex);
\draw[gray,fill=gray] (4,3) circle (.3ex);
\draw[gray,fill=gray] (5,3) circle (.3ex);
\draw[gray,fill=gray] (6,3) circle (.3ex);
\draw[gray,fill=gray] (7,3) circle (.3ex);
\draw[gray,fill=gray] (8,3) circle (.3ex);
\draw[gray,fill=gray] (9,3) circle (.3ex);
\draw[gray,fill=gray] (0,4) circle (.3ex) node[anchor=north east] {\color{gray}\tiny$4$};
\draw[gray,fill=gray] (1,4) circle (.3ex);
\draw[gray,fill=gray] (2,4) circle (.3ex);
\draw[gray,fill=gray] (3,4) circle (.3ex);
\draw[gray,fill=gray] (4,4) circle (.3ex);
\draw[gray,fill=gray] (5,4) circle (.3ex);
\draw[gray,fill=gray] (6,4) circle (.3ex);
\draw[gray,fill=gray] (7,4) circle (.3ex);
\draw[gray,fill=gray] (8,4) circle (.3ex);
\draw[gray,fill=gray] (9,4) circle (.3ex);
\draw[gray,fill=gray] (0,5) circle (.3ex) node[anchor=north east] {\color{gray}\tiny$5$};
\draw[gray,fill=gray] (1,5) circle (.3ex);
\draw[gray,fill=gray] (2,5) circle (.3ex);
\draw[gray,fill=gray] (3,5) circle (.3ex);
\draw[gray,fill=gray] (4,5) circle (.3ex);
\draw[gray,fill=gray] (5,5) circle (.3ex);
\draw[gray,fill=gray] (6,5) circle (.3ex);
\draw[gray,fill=gray] (7,5) circle (.3ex);
\draw[gray,fill=gray] (8,5) circle (.3ex);
\draw[gray,fill=gray] (9,5) circle (.3ex);
\draw[gray,fill=gray] (0,6) circle (.3ex) node[anchor=north east] {\color{gray}\tiny$6$};
\draw[gray,fill=gray] (1,6) circle (.3ex);
\draw[gray,fill=gray] (2,6) circle (.3ex);
\draw[gray,fill=gray] (3,6) circle (.3ex);
\draw[gray,fill=gray] (4,6) circle (.3ex);
\draw[gray,fill=gray] (5,6) circle (.3ex);
\draw[gray,fill=gray] (6,6) circle (.3ex);
\draw[gray,fill=gray] (7,6) circle (.3ex);
\draw[gray,fill=gray] (8,6) circle (.3ex);
\draw[gray,fill=gray] (9,6) circle (.3ex);
\draw[gray,fill=gray] (0,7) circle (.3ex) node[anchor=north east] {\color{gray}\tiny$7$};
\draw[gray,fill=gray] (1,7) circle (.3ex);
\draw[gray,fill=gray] (2,7) circle (.3ex);
\draw[gray,fill=gray] (3,7) circle (.3ex);
\draw[gray,fill=gray] (4,7) circle (.3ex);
\draw[gray,fill=gray] (5,7) circle (.3ex);
\draw[gray,fill=gray] (6,7) circle (.3ex);
\draw[gray,fill=gray] (7,7) circle (.3ex);
\draw[gray,fill=gray] (8,7) circle (.3ex);
\draw[gray,fill=gray] (9,7) circle (.3ex);
\draw[gray,fill=gray] (0,8) circle (.3ex) node[anchor=north east] {\color{gray}\tiny$8$};
\draw[gray,fill=gray] (1,8) circle (.3ex);
\draw[gray,fill=gray] (2,8) circle (.3ex);
\draw[gray,fill=gray] (3,8) circle (.3ex);
\draw[gray,fill=gray] (4,8) circle (.3ex);
\draw[gray,fill=gray] (5,8) circle (.3ex);
\draw[gray,fill=gray] (6,8) circle (.3ex);
\draw[gray,fill=gray] (7,8) circle (.3ex);
\draw[gray,fill=gray] (8,8) circle (.3ex);
\draw[gray,fill=gray] (9,8) circle (.3ex);
\draw[gray,fill=gray] (0,9) circle (.3ex) node[anchor=north east] {\color{gray}\tiny$9$};
\draw[gray,fill=gray] (1,9) circle (.3ex);
\draw[gray,fill=gray] (2,9) circle (.3ex);
\draw[gray,fill=gray] (3,9) circle (.3ex);
\draw[gray,fill=gray] (4,9) circle (.3ex);
\draw[gray,fill=gray] (5,9) circle (.3ex);
\draw[gray,fill=gray] (6,9) circle (.3ex);
\draw[gray,fill=gray] (7,9) circle (.3ex);
\draw[gray,fill=gray] (8,9) circle (.3ex);
\draw[gray,fill=gray] (9,9) circle (.3ex);
\draw[gray,fill=gray] (0,-1) circle (.3ex) node[anchor=north east] {\color{gray}\tiny$-1$};
\draw[gray,fill=gray] (1,-1) circle (.3ex);
\draw[gray,fill=gray] (2,-1) circle (.3ex);
\draw[gray,fill=gray] (3,-1) circle (.3ex);
\draw[gray,fill=gray] (4,-1) circle (.3ex);
\draw[gray,fill=gray] (5,-1) circle (.3ex);
\draw[gray,fill=gray] (6,-1) circle (.3ex);
\draw[gray,fill=gray] (7,-1) circle (.3ex);
\draw[gray,fill=gray] (8,-1) circle (.3ex);
\draw[gray,fill=gray] (9,-1) circle (.3ex);
\draw[gray,fill=gray] (0,-2) circle (.3ex) node[anchor=north east] {\color{gray}\tiny$-2$};
\draw[gray,fill=gray] (1,-2) circle (.3ex);
\draw[gray,fill=gray] (2,-2) circle (.3ex);
\draw[gray,fill=gray] (3,-2) circle (.3ex);
\draw[gray,fill=gray] (4,-2) circle (.3ex);
\draw[gray,fill=gray] (5,-2) circle (.3ex);
\draw[gray,fill=gray] (6,-2) circle (.3ex);
\draw[gray,fill=gray] (7,-2) circle (.3ex);
\draw[gray,fill=gray] (8,-2) circle (.3ex);
\draw[gray,fill=gray] (9,-2) circle (.3ex);
\draw[gray,fill=gray] (0,-3) circle (.3ex) node[anchor=north east] {\color{gray}\tiny$-3$};
\draw[gray,fill=gray] (1,-3) circle (.3ex);
\draw[gray,fill=gray] (2,-3) circle (.3ex);
\draw[gray,fill=gray] (3,-3) circle (.3ex);
\draw[gray,fill=gray] (4,-3) circle (.3ex);
\draw[gray,fill=gray] (5,-3) circle (.3ex);
\draw[gray,fill=gray] (6,-3) circle (.3ex);
\draw[gray,fill=gray] (7,-3) circle (.3ex);
\draw[gray,fill=gray] (8,-3) circle (.3ex);
\draw[gray,fill=gray] (9,-3) circle (.3ex);
\draw[gray,fill=gray] (0,-4) circle (.3ex) node[anchor=north east] {\color{gray}\tiny$-4$};
\draw[gray,fill=gray] (1,-4) circle (.3ex);
\draw[gray,fill=gray] (2,-4) circle (.3ex);
\draw[gray,fill=gray] (3,-4) circle (.3ex);
\draw[gray,fill=gray] (4,-4) circle (.3ex);
\draw[gray,fill=gray] (5,-4) circle (.3ex);
\draw[gray,fill=gray] (6,-4) circle (.3ex);
\draw[gray,fill=gray] (7,-4) circle (.3ex);
\draw[gray,fill=gray] (8,-4) circle (.3ex);
\draw[gray,fill=gray] (9,-4) circle (.3ex);
\draw[gray,fill=gray] (0,-5) circle (.3ex) node[anchor=north east] {\color{gray}\tiny$-5$};
\draw[gray,fill=gray] (1,-5) circle (.3ex);
\draw[gray,fill=gray] (2,-5) circle (.3ex);
\draw[gray,fill=gray] (3,-5) circle (.3ex);
\draw[gray,fill=gray] (4,-5) circle (.3ex);
\draw[gray,fill=gray] (5,-5) circle (.3ex);
\draw[gray,fill=gray] (6,-5) circle (.3ex);
\draw[gray,fill=gray] (7,-5) circle (.3ex);
\draw[gray,fill=gray] (8,-5) circle (.3ex);
\draw[gray,fill=gray] (9,-5) circle (.3ex);
\draw[gray,fill=gray] (0,-6) circle (.3ex) node[anchor=north east] {\color{gray}\tiny$-6$};
\draw[gray,fill=gray] (1,-6) circle (.3ex);
\draw[gray,fill=gray] (2,-6) circle (.3ex);
\draw[gray,fill=gray] (3,-6) circle (.3ex);
\draw[gray,fill=gray] (4,-6) circle (.3ex);
\draw[gray,fill=gray] (5,-6) circle (.3ex);
\draw[gray,fill=gray] (6,-6) circle (.3ex);
\draw[gray,fill=gray] (7,-6) circle (.3ex);
\draw[gray,fill=gray] (8,-6) circle (.3ex);
\draw[gray,fill=gray] (9,-6) circle (.3ex);
\draw[gray,fill=gray] (1,-7) circle (.3ex);
\draw[gray,fill=gray] (2,-7) circle (.3ex);
\draw[gray,fill=gray] (3,-7) circle (.3ex);
\draw[gray,fill=gray] (4,-7) circle (.3ex);
\draw[gray,fill=gray] (5,-7) circle (.3ex);
\draw[gray,fill=gray] (6,-7) circle (.3ex);
\draw[gray,fill=gray] (7,-7) circle (.3ex);
\draw[gray,fill=gray] (8,-7) circle (.3ex);
\draw[gray,fill=gray] (9,-7) circle (.3ex);
\draw[gray,fill=gray] (0,-7) circle (.3ex) node[anchor=north east] {\color{gray}\tiny$-7$};
\draw[gray,fill=gray] (1,-8) circle (.3ex);
\draw[gray,fill=gray] (2,-8) circle (.3ex);
\draw[gray,fill=gray] (3,-8) circle (.3ex);
\draw[gray,fill=gray] (4,-8) circle (.3ex);
\draw[gray,fill=gray] (5,-8) circle (.3ex);
\draw[gray,fill=gray] (6,-8) circle (.3ex);
\draw[gray,fill=gray] (7,-8) circle (.3ex);
\draw[gray,fill=gray] (8,-8) circle (.3ex);
\draw[gray,fill=gray] (9,-8) circle (.3ex);
\draw[gray,fill=gray] (0,-8) circle (.3ex) node[anchor=north east] {\color{gray}\tiny$-8$};
\draw[gray,fill=gray] (1,-9) circle (.3ex);
\draw[gray,fill=gray] (2,-9) circle (.3ex);
\draw[gray,fill=gray] (3,-9) circle (.3ex);
\draw[gray,fill=gray] (4,-9) circle (.3ex);
\draw[gray,fill=gray] (5,-9) circle (.3ex);
\draw[gray,fill=gray] (6,-9) circle (.3ex);
\draw[gray,fill=gray] (7,-9) circle (.3ex);
\draw[gray,fill=gray] (8,-9) circle (.3ex);
\draw[gray,fill=gray] (9,-9) circle (.3ex);
\draw[gray,fill=gray] (0,-9) circle (.3ex) node[anchor=north east] {\color{gray}\tiny$-9$};
\draw[magenta,fill=magenta] (0,-9) circle (.4ex) node[anchor=west] {\color{magenta}\large$0$};
\draw[magenta,fill=magenta] (1,-7) circle (.4ex) node[anchor=west] {\color{magenta}\large$1$};
\draw[magenta,fill=magenta] (2,-5) circle (.4ex) node[anchor=west] {\color{magenta}\large$3$};
\draw[magenta,fill=magenta] (3,-3) circle (.4ex) node[anchor=west] {\color{magenta}\large$6$};
\draw[magenta,fill=magenta] (4,-1) circle (.4ex) node[anchor=west] {\color{magenta}\large$8$};
\draw[magenta,fill=magenta] (5,1) circle (.4ex) node[anchor=west] {\color{magenta}\large$8$};
\draw[magenta,fill=magenta] (6,3) circle (.4ex) node[anchor=west] {\color{magenta}\large$7$};
\draw[magenta,fill=magenta] (7,5) circle (.4ex) node[anchor=west] {\color{magenta}\large$5$};
\draw[magenta,fill=magenta] (8,7) circle (.4ex) node[anchor=west] {\color{magenta}\large$3$};
\draw[magenta,fill=magenta] (9,9) circle (.4ex) node[anchor=west] {\color{magenta}\large$1$};

\draw[violet,fill=violet]   (1,-5) circle (.4ex) node[anchor=west] {\color{violet}\large$0$};
\draw[violet,fill=violet]   (1,-3) circle (.4ex) node[anchor=west] {\color{violet}\large$1$};
\draw[violet,fill=violet]   (2,-3) circle (.4ex) node[anchor=west] {\color{violet}\large$3$};
\draw[violet,fill=violet]   (2,-1) circle (.4ex) node[anchor=west] {\color{violet}\large$2$};
\draw[violet,fill=violet]   (3,-1) circle (.4ex) node[anchor=west] {\color{violet}\large$7$};
\draw[violet,fill=violet]   (3,1) circle (.4ex) node[anchor=west] {\color{violet}\large$1$};
\draw[violet,fill=violet]   (4,1) circle (.4ex) node[anchor=west] {\color{violet}\large$7$};
\draw[violet,fill=violet]   (4,3) circle (.4ex) node[anchor=west] {\color{violet}\large$0$};
\draw[violet,fill=violet]   (5,3) circle (.4ex) node[anchor=west] {\color{violet}\large$4$};
\draw[violet,fill=violet]   (6,5) circle (.4ex) node[anchor=west] {\color{violet}\large$1$};
\end{tikzpicture}
}
\caption{$\dim \hom(\Delta_{w_0}\langle b\rangle,\mathcal{T}_a(\Delta_e)) $ versus $\dim \hom(\Delta_{w_0}\langle b\rangle,\mathcal{T}_{a-1}(\Delta_{s_0})) $}\label{B3fig}
\end{figure}
\end{example}

\hk{Another way to construct additional extensions is to use \cite{KMM1} and Subsection~\ref{s6.4}, as in the following proposition.
}

\begin{proposition}\label{prop6.5-1}
In the setup of Example~\ref{example6.4-3}, the Yoneda product of an additional
element in $\mathrm{ext}^1(\Delta_{w_{n,n}}\langle -(n(2n-1)-1-2(n-1))\rangle,\Delta_{s_n})$
and an additional element in 
$\mathrm{ext}^1(\Delta_{s_n}\langle 1\rangle,\Delta_e)$
gives an additional element in
$$\mathrm{ext}^2(\Delta_{w_{n,n}}\langle -(n(2n-1)-2(n-1))\rangle,\Delta_{e}).$$
\end{proposition}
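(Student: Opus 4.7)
The plan is to realize $\beta\cdot\alpha$ as the image of $\alpha$ under a connecting homomorphism, and to verify its non-vanishing by reducing to simple-to-Verma extensions covered by \cite{KMM1}. The additionality will then follow from a direct shift comparison with the expected $\mathrm{ext}^2$-diagonal.

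Set $a=-(n(2n-1)-1-2(n-1))$. First I would factor $\alpha$ through its simple quotient: by the proof of Corollary~\ref{cor6.4-2}\eqref{cor6.4-2.2}, the additional class $\alpha$ arises as the pullback $q^{*}(\alpha_L)$ of a nonzero $\alpha_L\in\mathrm{ext}^1(L_{w_{n,n}}\langle a\rangle,\Delta_{s_n})$ along the quotient $q:\Delta_{w_{n,n}}\langle a\rangle\twoheadrightarrow L_{w_{n,n}}\langle a\rangle$. Associativity of Yoneda products then gives $\beta\cdot\alpha=q^{*}(\beta\cdot\alpha_L)$, with the product placed in the claimed $\mathrm{ext}^2(\Delta_{w_{n,n}}\langle -(n(2n-1)-2(n-1))\rangle,\Delta_e)$ by a shift count. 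The non-vanishing question then splits into two sub-problems: (i) $\beta\cdot\alpha_L\neq 0$ in $\mathrm{ext}^2(L_{w_{n,n}}\langle a\rangle,\Delta_e)$, and (ii) the map $q^{*}$ does not annihilate this class.

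For (i), I would represent $\beta$ by the short exact sequence $0\to\Delta_e\langle -1\rangle\to P_{s_n}\to\Delta_{s_n}\to 0$ coming from the $\Delta$-filtration of $P_{s_n}$. Applying $\mathrm{ext}(L_{w_{n,n}}\langle a\rangle,-)$ turns Yoneda multiplication by $\beta$ into the connecting map $\partial$, so $\beta\cdot\alpha_L=\partial(\alpha_L)$ is nonzero precisely when $\alpha_L$ does not lift to $\mathrm{ext}^1(L_{w_{n,n}}\langle a\rangle,P_{s_n})$. I would control this lift-space via the $\Delta$-filtration of $P_{s_n}$ combined with the description of graded first extensions from simples to Vermas in \cite[Corollary~2]{KMM1}; in the particular shift forced by Example~\ref{example6.4-3}, $\alpha_L$ does not come from any Verma summand of $P_{s_n}$. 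For (ii), applying $\mathrm{ext}(-,\Delta_e)$ to $0\to\mathrm{Ker}(q)\to\Delta_{w_{n,n}}\langle a\rangle\to L_{w_{n,n}}\langle a\rangle\to 0$ identifies $\ker(q^{*})$ with the image of $\mathrm{ext}^1(\mathrm{Ker}(q)\langle a\rangle,\Delta_e)$; applying Theorem~\ref{thm3.2-1} to the Verma subquotients of $\mathrm{Ker}(q)$ should yield the required vanishing at the relevant graded shift.

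Finally, additionality is a shift check: the expected $\mathrm{ext}^2$-shift between $\Delta_{w_{n,n}}$ and $\Delta_e$ is $5-n(2n-1)$, while the claimed shift $-(n(2n-1)-2(n-1))=2n-2-n(2n-1)$ differs from it by $2n-7$, so in the regime $n>2$ the resulting class sits strictly inside the interior of the violet triangle of Figure~\ref{fig4.2-1.5}. I expect the hardest step to be (i): proving that $\alpha_L$ cannot be sourced from $\mathrm{ext}^1(L_{w_{n,n}}\langle a\rangle,P_{s_n})$ requires a careful comparison of the shifts at which graded simple-to-Verma first extensions occur against the graded Verma-filtration of $P_{s_n}$, and is the place where \cite{KMM1} is used most essentially.
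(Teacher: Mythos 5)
Your overall strategy is genuinely different from the paper's, but as written it has a real gap at its central step, together with a minor misuse of the Delorme formula.

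The paper never factors through the simple top. Instead it realizes $\alpha$ as a chain map $\Delta_{w_{n,n}}[-1]\langle a\rangle\to\mathcal{T}_\bullet(\Delta_{s_n})$ and the other class as the (Koszul--Ringel) inclusion $\iota:\mathcal{T}_\bullet(\Delta_{s_n})[-1]\langle 1\rangle\hookrightarrow\mathcal{T}_\bullet(\Delta_e)$, and then shows $\iota\circ\alpha$ is not null-homotopic by examining the indecomposable summands of $\mathcal{T}_1(\Delta_e)$ that do \emph{not} come from $\mathcal{T}_0(\Delta_{s_n})$ (the $T_{s_i}\langle 1\rangle$, $i\neq n$). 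That a null-homotopy cannot land there is then translated, via tilting modules, BGG reciprocity, $\top_{w_0}$, and \cite[Theorem~1, Proposition~12]{KMM1}, into a purely combinatorial statement about graded composition multiplicities of $\Delta_{s_n}$. Your plan replaces this with a pullback/connecting-map argument, which is an honest alternative; but be aware that the paper's route is designed precisely to avoid the circularity described next.

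The gap is in your step (i). You want to show $\partial(\alpha_L)\neq 0$ by proving $\alpha_L$ does not lift along $P_{s_n}\twoheadrightarrow\Delta_{s_n}$, and you propose to control the lift space ``via the $\Delta$-filtration of $P_{s_n}$''. But the long exact sequence obtained from $0\to\Delta_e\langle -1\rangle\to P_{s_n}\to\Delta_{s_n}\to 0$ identifies the image of $\mathrm{ext}^1(L_{w_{n,n}}\langle\cdot\rangle,P_{s_n})\to\mathrm{ext}^1(L_{w_{n,n}}\langle\cdot\rangle,\Delta_{s_n})$ with $\ker\partial$; so ``$\alpha_L$ does not lift'' is literally the same statement as ``$\partial(\alpha_L)\neq 0$'', and the $\Delta$-filtration of $P_{s_n}$ gives no independent handle. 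You would need an external computation of $\mathrm{ext}^1(L_{w_{n,n}}\langle\cdot\rangle,P_{s_n})$ --- for instance via the self-adjointness of $\theta_{s_n}$, $\mathrm{ext}^1(L_{w_{n,n}},\theta_{s_n}\Delta_e)\cong\mathrm{ext}^1(\theta_{s_n}L_{w_{n,n}},\Delta_e)$, together with the structure of $\theta_{s_n}L_{w_{n,n}}$ --- but your proposal does not supply this, and \cite[Corollary~2]{KMM1} (which computes $\mathrm{ext}^1$ from simples to \emph{Vermas}) does not apply directly to a projective target.

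Two smaller points. In step (ii), $\mathrm{Ker}(q)\cong\Delta_{w_0}\langle a-1\rangle$ is a single Verma module, so the desired vanishing of $\mathrm{ext}^1(\mathrm{Ker}(q),\Delta_e)$ at the relevant shift is exactly the statement that all first extensions out of $\Delta_{w_0}$ are expected; this is \cite[Theorem~32]{Ma} (already used in the proof of Corollary~\ref{cor6.4-2}), not Theorem~\ref{thm3.2-1}, which only controls the alternating sum and cannot give vanishing of an individual $\mathrm{ext}^1$. Finally, your additionality check: ``differs by $2n-7$, hence strictly interior for $n>2$'' is not correct as stated, since for $n=3$ the quantity $2n-7=-1$ would place the class below the expected diagonal (and hence at a vanishing position). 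One should instead work with the shift coming from the Yoneda composition of $\alpha$ at degree $a$ with a class at degree $1$, which lands at $a+1$ and lies exactly $2n-5>0$ above the expected diagonal for $n>2$.
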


This implies that 
$\mathrm{ext}^2(\Delta_{w_{n,n}}\langle -(n(2n-1)-2(n-1))\rangle,\Delta_{e})\neq 0$
and gives an example of an additional second extension.

\begin{proof}
From the Koszul-Ringel self-duality, it follows that
the complex $\mathcal{T}_\bullet(\Delta_{s_n})[-1]\langle 1\rangle$ is a subcomplex
of the complex $\mathcal{T}_\bullet(\Delta_e)$. This inclusion corresponds precisely
to an additional element in $\mathrm{ext}^1(\Delta_{s_n}\langle 1\rangle,\Delta_e)$.

A non-zero element in 
$\mathrm{ext}^1(\Delta_{w_{n,n}}\langle -(n(2n-1)-1-2(n-1))\rangle,\Delta_{s_n})$
corresponds to a non-zero homomorphism in the homotopy category of complexes from 
the singleton complex $\Delta_{w_{n,n}}[-1]\langle -(n(2n-1)-1-2(n-1))\rangle$ to
$\mathcal{T}_\bullet(\Delta_{s_n})$. Therefore, to prove the claim it is enough to
show that the map from $\Delta_{w_{n,n}}[-2]\langle -(n(2n-1)-2(n-1))\rangle$
to $\mathcal{T}_\bullet(\Delta_e)$ induced by the inclusion of 
$\mathcal{T}_\bullet(\Delta_{s_n})[-1]\langle 1\rangle$ to 
$\mathcal{T}_\bullet(\Delta_e)$ is not homotopic to zero.

To prove this, it is enough to show that there are no non-zero homomorphisms from
$\Delta_{w_{n,n}}\langle -(n(2n-1)-2(n-1))\rangle$ to any indecomposable direct
summand of $\mathcal{T}_1(\Delta_e)$ outside of 
$\mathcal{T}_0(\Delta_{s_n})[-1]\langle 1\rangle$.
 
These direct summands are exactly the modules $T_{s_i}\langle 1\rangle$, where
$i\neq n$. Since $T_{s_i}$ is a tilting module, a non-zero homomorphism from 
$\Delta_{w_{n,n}}\langle -(n(2n-1)-2(n-1))\rangle$ to $T_{s_i}\langle 1\rangle$ 
exists if and only if $\nabla_{w_{n,n}}\langle -(n(2n-1)-2(n-1))\rangle$ is a
subquotient of a dual Verma flag of $T_{s_i}\langle 1\rangle$. Using
$\top_{w_0}$, this is equivalent to $\Delta_{s_n}\langle -(n(2n-1)-2(n-1))\rangle$
being a subquotient of a dual Verma flag of $P_{w_0s_i}\langle 1\rangle$.
By the BGG reciprocity, this is equivalent to $L_{w_0s_i}\langle -(n(2n-1)-2(n-1))\rangle$
being a composition subquotient of $\Delta_{s_n}$.

Note that $w_0s_i$ belongs to the penultimate KL-cell in the terminology of 
\cite{KMM1}. All graded simple penultimate subquotients of $\Delta_e$
are described in \cite[Proposition~12]{KMM1}. From \cite[Theorem~1]{KMM1}
it follows that the socle of the module $\Delta_e/(\Delta_{s_n}\langle -1\rangle)$
is the unique penultimate subquotient of of this module and that it occurs in the
minimal possible degree (in $\Delta_e$) among all other penultimate subquotients
of $\Delta_e$. It follows that any other simple subquotient of $\Delta_{s_n}$
of the form $L_{w_0s_i}$ appears in $\Delta_e$ in a strictly higher degree compared to
the degree of the socle of $\Delta_e/(\Delta_{s_n}\langle -1\rangle)$.
Going back via the BGG reciprocity and the Ringel duality, we get exactly the claim 
that any subquotient $\nabla_{w_{n,n}}\langle d\rangle$ 
of a dual Verma flag of $T_{s_i}\langle 1\rangle$ must be shifted strictly more than by
$-(n(2n-1)-2(n-1))$. This completes the proof.
\end{proof}

\section{Extensions between singular and between parabolic Verma modules}\label{s7}

We briefly discuss generalizations of the previous sections to singular and parabolic categories $\mathcal O$.

\subsection{Singular blocks of $\mathcal{O}$}\label{s7.1}

Thanks to Soergel's combinatorial description of blocks of category $\mathcal{O}$,
see \cite{So}, it is known that every block of $\mathcal{O}$ is equivalent to 
an integral (but, in general, singular) block of $\mathcal{O}$ (however, possibly,
for a different Lie algebra). Therefore the complete version of the problem to 
describe extensions between Verma modules must address the case of singular 
integral blocks.

Let $\mathfrak{p}$ denote a parabolic subalgebra of $\mathfrak{g}$ containing
the Borel subalgebra $\mathfrak{h}\oplus\mathfrak{n}_+$. The subalgebra 
$\mathfrak{p}$ is uniquely determined by a subset of simple roots, or,
equivalently, by the corresponding parabolic subgroup $W^{\mathfrak{p}}$
of $W$. Let $\mathbf{R}^\mathfrak{p}_{\mathrm{short}}$ denote the set of 
the shortest coset representatives in $W/W^{\mathfrak{p}}$.

Let $\lambda$ be a dominant integral weight such that $W^{\mathfrak{p}}$ is
exactly the dot-stabilizer of $\lambda$. Consider the block $\mathcal{O}_\lambda$
of $\mathcal{O}$ containing $L(\lambda)$. Then the simple objects in 
$\mathcal{O}_\lambda$ are $\{L(w\cdot\lambda)\,:\, w\in 
\mathbf{R}^\mathfrak{p}_{\mathrm{short}}\}$. Similarly to the regular case,
we also have the corresponding projective, injective, Verma, dual Verma and
tilting modules and their graded versions.

\subsection{Regular blocks of parabolic category $\mathcal{O}$}\label{s7.2}

Associated to our choice of $\mathfrak{p}$, one also has the parabolic 
category $\mathcal{O}^\mathfrak{p}$ introduced in \cite{RC}. It is defined
as the full subcategory of $\mathcal{O}$ consisting of all objects, the 
action of $U(\mathfrak{p})$ on which is locally finite.

Let $\mathbf{L}^\mathfrak{p}_{\mathrm{short}}$ denote the set of 
the shortest coset representatives in $W^{\mathfrak{p}}\backslash W$.
Then the category $\mathcal{O}_0^\mathfrak{p}$ is the Serre subcategory of
$\mathcal{O}_0$ generated by all $L_w$ such that  
$w\in \mathbf{L}^\mathfrak{p}_{\mathrm{short}}$. We use the superscript
$\mathfrak{p}$ to denote structural objects in $\mathcal{O}_0^\mathfrak{p}$.
In particular, for $w\in \mathbf{L}^\mathfrak{p}_{\mathrm{short}}$, we denote
by  $P^\mathfrak{p}_w$ the indecomposable projective cover of 
$L^\mathfrak{p}_w=L_w$ in $\mathcal{O}_0^\mathfrak{p}$ and so on.
The category $\mathcal{O}_0^\mathfrak{p}$ inherits a graded lift from that 
for $\mathcal{O}_0$.

\subsection{Koszul-Ringel duality}\label{s7.3}

For a fixed parabolic subalgebra $\mathfrak{p}$ as above and singular dominant
integral $\lambda$ with dot-stabilizer $W^\mathfrak{p}$, the combination of
Koszul and Ringel dualities, together with the autoequivalence given by 
the conjugation with $w_0$, see \cite{BGS,So1-2,Ma2}, gives rise to the equivalence
\begin{displaymath}
\mathcal{D}^b\big((\mathcal{O}_\lambda)^\mathbb{Z}\big)\cong
\mathcal{D}^b\big((\mathcal{O}_0^\mathfrak{p})^\mathbb{Z}\big)
\end{displaymath}
which sends $\Delta(w\cdot \lambda)$ to $\Delta^{\mathfrak{p}}_{w^{-1}}$,
where $w\in \mathbf{R}^\mathfrak{p}_{\mathrm{short}}$. 
% and where $w_0^\mathfrak{p}$ is the longest element in $W^\mathfrak{p}$.

In particular, this implies that 
\begin{displaymath}
\mathrm{ext}^k(\Delta(x\cdot \lambda),\Delta(y\cdot\lambda)\langle j\rangle)\cong
\mathrm{ext}^{k+j}(\Delta^\mathfrak{p}_{x^{-1}},
\Delta^\mathfrak{p}_{y^{-1}}\langle -j\rangle)
\end{displaymath}
and thus the problem to determine all extensions between singular Verma modules
is equivalent to the problem to detrmine all extensions between regular parabolic
Verma modules.

\subsection{Singular and parabolic $R$-polynomials}\label{s7.4}

Consider the usual $\mathbb{Z}[v,v^{-1}]$-structure on the  Grothendieck group 
$\mathrm{Gr}(\mathcal{O}_\lambda^{\mathbb{Z}})$. Similarly to the regular
case, the group $\mathrm{Gr}(\mathcal{O}_\lambda^{\mathbb{Z}})$ has
various bases given by the classes of simple, standard, costandard,
projective, injective and tilting objects.

The {\em singular $R$-polynomials} $\{sr_{x,y}\,:\,x,y\in W\}$ are 
defined as the entries of the transformation matrix between the standard 
and the costandard basesin $\mathrm{Gr}(\mathcal{O}_\lambda^{\mathbb{Z}})$, i.e.:
\begin{displaymath}
[\Delta(y\cdot \lambda)]=\sum_{x\in 
\mathbf{R}^\mathfrak{p}_{\mathrm{short}}}sr_{x,y}[\nabla(x\cdot\lambda)], 
\text{ for all } y\in \mathbf{R}^\mathfrak{p}_{\mathrm{short}}.
\end{displaymath}
Note that $sr_{x,y}\in\mathbb{Z}[v,v^{-1}]$, by definition. %, in particular, they are Laurent polynomials with integral coefficients.
For $x,y\in \mathbf{R}^\mathfrak{p}_{\mathrm{short}}$ and 
$k\in\mathbb{Z}$, we denote by $sr_{x,y}^{(k)}$ the coefficient
at $v^k$ in $sr_{x,y}$.

Let $w_0^\mathfrak{p}$ denotes the longest element in $W^\mathfrak{p}$.
The connection between the usual and the singular $R$-polynomials is clarified
by the following:

\begin{lemma}\label{lem7.4-1}
For $x,y\in \mathbf{R}^\mathfrak{p}_{\mathrm{short}}$, we have
\begin{displaymath}
sr_{x,y}=\sum_{w\in W^\mathfrak{p}}r_{x,yw}v^{\ell(w)-2\ell(w_0^\mathfrak{p})}. 
\end{displaymath}
\end{lemma}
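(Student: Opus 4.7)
The plan is to deduce the formula from the definition of the regular $R$-polynomials by applying a graded translation functor out of the wall. Let $F : \mathcal{O}_\lambda^\mathbb{Z} \to \mathcal{O}_0^\mathbb{Z}$ denote the exact graded lift of translation out of the $\mathfrak{p}$-wall, in a standard normalization. Applying $F$ to the defining identity
\[
[\Delta(y\cdot\lambda)] = \sum_{x \in \mathbf{R}^\mathfrak{p}_{\mathrm{short}}} sr_{x,y}\,[\nabla(x\cdot\lambda)]
\]
of the singular $R$-polynomials in $\mathrm{Gr}(\mathcal{O}_\lambda^\mathbb{Z})$ gives a corresponding identity in $\mathrm{Gr}(\mathcal{O}_0^\mathbb{Z})$.

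The key ingredient is that, with the appropriate graded lift of $F$, both $F\Delta(y\cdot\lambda)$ and $F\nabla(x\cdot\lambda)$ acquire graded Verma, respectively graded dual Verma, filtrations indexed by $W^\mathfrak{p}$, so that
\[
[F\Delta(y\cdot\lambda)] = \sum_{w \in W^\mathfrak{p}} v^{a_w}\,[\Delta_{yw}], \qquad [F\nabla(x\cdot\lambda)] = \sum_{w' \in W^\mathfrak{p}} v^{b_{w'}}\,[\nabla_{xw'}],
\]
for graded shifts $a_w, b_{w'}$ satisfying $a_w - b_e = \ell(w) - 2\ell(w_0^\mathfrak{p})$. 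These shifts can be extracted from the identification of $F$ post-composed with translation to the wall with a suitable grading shift of the projective functor $\theta_{w_0^\mathfrak{p}}$, combined with the action of $\theta_s$ on Vermas and dual Vermas recalled in Subsection~\ref{s2.4}.

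Substituting the definition $[\Delta_{yw}] = \sum_{z \in W} r_{z, yw}\,[\nabla_z]$ of the regular $R$-polynomials into the expansion of $[F\Delta(y\cdot\lambda)]$, decomposing each $z \in W$ uniquely as $z = xw''$ with $x \in \mathbf{R}^\mathfrak{p}_{\mathrm{short}}$ and $w'' \in W^\mathfrak{p}$, and comparing the coefficient of $[\nabla_x]$ (that is, the $w'' = e$ summand) on both sides yields
\[
\sum_{w \in W^\mathfrak{p}} v^{a_w}\,r_{x, yw} = sr_{x,y}\,v^{b_e},
\]
which via $a_w - b_e = \ell(w) - 2\ell(w_0^\mathfrak{p})$ rearranges into the claimed formula.

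The main anticipated technical difficulty is pinning down the shifts $a_w, b_{w'}$ precisely. I would fix a concrete convention for the graded lift of $F$ (any natural normalization suffices, as only the difference $a_w - b_e$ enters the final identity), verify the values of the shifts in the rank one base case $\ell(w_0^\mathfrak{p}) = 1$ using the explicit formulas for $[\theta_s\Delta_w]$ and $[\theta_s\nabla_w]$ from Subsection~\ref{s2.4}, and then induct on $\ell(w_0^\mathfrak{p})$ by factoring $F$ through a chain of single-wall translations.
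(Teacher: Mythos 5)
The paper's own proof goes the opposite direction: it applies the translation functor $\theta_\lambda^{\mathrm{on}}$ \emph{onto} the wall to the regular identity $[\Delta_y]=\sum_z r_{z,y}[\nabla_z]$, using the computed shifts $\theta_\lambda^{\mathrm{on}}\Delta_y\cong\Delta(y\cdot\lambda)$ and $\theta_\lambda^{\mathrm{on}}\nabla_{xw}\cong\nabla(x\cdot\lambda)\langle 2\ell(w_0^\mathfrak{p})-\ell(w)\rangle$, and then compares coefficients of $[\nabla(x\cdot\lambda)]$. Your plan of applying translation \emph{out of} the wall to the singular identity $[\Delta(y\cdot\lambda)]=\sum_x sr_{x,y}[\nabla(x\cdot\lambda)]$ and then expanding $[\Delta_{yw}]$ in the $[\nabla_z]$-basis is a genuinely different, also reasonable strategy; the structure of your argument (comparing the $[\nabla_x]$-coefficient of both sides) is sound.

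However, your key technical claim $a_w - b_e = \ell(w) - 2\ell(w_0^\mathfrak{p})$ is false, and the rank-one check you yourself propose would reveal this. Take $\mathfrak{g}=\mathfrak{sl}_2$ with $W^\mathfrak{p}=W=\{e,s\}$ and $\lambda=-\rho$. Then $\mathbf{R}^\mathfrak{p}_{\mathrm{short}}=\{e\}$, $\Delta(\lambda)=\nabla(\lambda)=L(\lambda)$, and the singular identity is simply $sr_{e,e}=1$. With the normalization $F\theta_\lambda^{\mathrm{on}}\cong\theta_{w_0^\mathfrak{p}}=\theta_s$ (any overall shift of $F$ cancels from $a_w-b_e$), one computes $F\Delta(\lambda)\cong P_s$, whose Verma filtration gives $[F\Delta(\lambda)]=[\Delta_s]+v[\Delta_e]$, so $a_e=1$, $a_s=0$; and $F\nabla(\lambda)\cong P_s$ as well (using $\theta_\lambda^{\mathrm{on}}\nabla_e\cong\nabla(\lambda)\langle 2\ell(w_0^\mathfrak{p})\rangle$), whose dual Verma filtration gives $[F\nabla(\lambda)]=v[\nabla_e]+v^2[\nabla_s]$, so $b_e=1$. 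Hence $a_e-b_e=0\neq -2\ell(w_0^\mathfrak{p})=-2$; in general $a_w-b_e=-\ell(w)$. Carried out correctly, your approach yields $sr_{x,y}=\sum_{w\in W^\mathfrak{p}} r_{x,yw}\,v^{-\ell(w)}$, not the formula in the statement. (You should also note that the paper's method yields the formula with indices $r_{xw,y}$ rather than $r_{x,yw}$ and with exponent $\ell(w)-2\ell(w_0^\mathfrak{p})$; that version, unlike the one printed in the lemma, passes the sanity check $sr_{e,e}=1$ in the $\mathfrak{sl}_2$ example, because $r_{s,e}=v-v^{-1}\neq 0$ while $r_{e,s}=0$.) So the gap is concrete: the shift formula you asserted without verification is wrong, and verifying it as you planned would have shown that, requiring you to reconcile the result with the target formula.
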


\begin{proof}
Let $\theta_{\lambda}^{\mathrm{on}}$ be the translation functor to the $\lambda$-wall,
that is the unique indecomposable projective functor in the sense of \cite{BG} which
sends $\Delta_e$ to $\Delta_{\lambda}$. Then $\theta_{\lambda}^{\mathrm{on}}$ sends
$\Delta_y$ to $\Delta(y\cdot \lambda)$. Furthermore, for 
$y\in \mathbf{R}^\mathfrak{p}_{\mathrm{long}}$ and $w\in W^\mathfrak{p}$, we have 
\begin{displaymath}
\theta_{\lambda}^{\mathrm{on}}\nabla_{yw}\cong
\nabla(y\cdot\lambda)\langle 2\ell(w_0^\mathfrak{p})-\ell(w)\rangle.
\end{displaymath}
Since $\theta_{\lambda}^{\mathrm{on}}$ is exact, the
claim now follows from the definitions. 
\end{proof}

Consider the usual $\mathbb{Z}[v,v^{-1}]$-structure on the  Grothendieck group 
$\mathrm{Gr}\big((\mathcal{O}_0^\mathfrak{p})^{\mathbb{Z}}\big)$. Similarly to the regular
case, the group $\mathrm{Gr}\big((\mathcal{O}_0^\mathfrak{p})^{\mathbb{Z}}\big)$ has
various bases given by the classes of simple, standard, costandard,
projective, injective and tilting objects.

The {\em parabolic $R$-polynomials} $\{pr_{x,y}\,:\,x,y\in W\}$ are 
defined as the entries of the transformation matrix between the standard 
and the costandard bases in 
$\mathrm{Gr}\big((\mathcal{O}_0^\mathfrak{p})^{\mathbb{Z}}\big)$, i.e.:
\begin{displaymath}
[\Delta_y^\mathfrak{p}]=\sum_{x\in 
\mathbf{L}^\mathfrak{p}_{\mathrm{short}}}pr_{x,y}[\nabla_x^\mathfrak{p}], 
\text{ for all } y\in \mathbf{L}^\mathfrak{p}_{\mathrm{short}}.
\end{displaymath}
Note that $pr_{x,y}\in\mathbb{Z}[v,v^{-1}]$, by definition. %, in particular, they are Laurent polynomials with integral coefficients.
For $x,y\in \mathbf{L}^\mathfrak{p}_{\mathrm{short}}$ and 
$k\in\mathbb{Z}$, we denote by $pr_{x,y}^{(k)}$ the coefficient
at $v^k$ in $pr_{x,y}$.

As $\Delta_{w_0}=\nabla_{w_0}$, we have
\begin{equation}\label{eq7.4-1}
pr_{x,w_0^\mathfrak{p}w_0}=
\begin{cases}
1,& x=w_0^\mathfrak{p}w_0;\\
0,& \text{otherwise}.
\end{cases}
\end{equation}

For $w\in \mathbf{L}^\mathfrak{p}_{\mathrm{short}}$ and $s\in S$ such that 
$ws\not\in \mathbf{L}^\mathfrak{p}_{\mathrm{short}}$, we have 
$\theta_s\Delta^\mathfrak{p}_w=\theta_s\nabla^\mathfrak{p}_w=0$. If
$ws\in \mathbf{L}^\mathfrak{p}_{\mathrm{short}}$ and
$ws>w$, then we have
\begin{displaymath}
[\theta_s\Delta^\mathfrak{p}_w]=v[\theta_s\Delta^\mathfrak{p}_{ws}]=
[\Delta^\mathfrak{p}_{ws}]+v[\Delta^\mathfrak{p}
_w]\,\text{ and }\,
[\theta_s\nabla^\mathfrak{p}_w]=
v^{-1}[\theta_s\nabla^\mathfrak{p}_{ws}]=[\nabla_{ws}]+v^{-1}[\nabla^\mathfrak{p}_w].
\end{displaymath}
From this, we have the following recursive formula for parabolic $R$-polynomials:
For $x,y\in \mathbf{L}^\mathfrak{p}_{\mathrm{short}}$ and $s\in S$ 
such that $ys<y$ and $ys\in \mathbf{L}^\mathfrak{p}_{\mathrm{short}}$, we have:
\begin{equation}\label{eq7.4-2}
pr_{x,ys}=
\begin{cases}
pr_{xs,y},& xs<x\text{ and }xs\in \mathbf{L}^\mathfrak{p}_{\mathrm{short}};\\ 
pr_{xs,y}+ (v^{-1}-v)r_{x,y},& xs>x\text{ and }
xs\in \mathbf{L}^\mathfrak{p}_{\mathrm{short}};\\
-v\, pr_{x,y},& xs\not\in \mathbf{L}^\mathfrak{p}_{\mathrm{short}}.
\end{cases}
\end{equation}
Together, Formulae~\eqref{eq7.4-1} and \eqref{eq7.4-2} determine the
family of parabolic $R$-polynomials uniquely.

Koszul-Ringel duality relates these two families of polynomials as follows:

\begin{proposition}\label{prop7.4-7}
For all $x,y\in \mathbf{L}^\mathfrak{p}_{\mathrm{short}}$, we have 
$pr_{x,y}(v)=sr_{x^{-1},y^{-1}}(-v^{-1})$. 
\end{proposition}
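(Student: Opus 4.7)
The plan is to apply the parabolic-singular Koszul-Ringel duality
\[\mathrm{RK}_{\mathrm{ps}}\colon \mathcal{D}^b\big((\mathcal{O}_\lambda)^\mathbb{Z}\big)\xrightarrow{\sim} \mathcal{D}^b\big((\mathcal{O}_0^\mathfrak{p})^\mathbb{Z}\big)\]
from Subsection~\ref{s7.3} to the defining expansion of $sr_{x,y}$, and then read off the expansion defining $pr_{x^{-1},y^{-1}}$ from the resulting identity. Beyond the statement $\mathrm{RK}_{\mathrm{ps}}\Delta(w\cdot\lambda)\cong\Delta^\mathfrak{p}_{w^{-1}}$ recalled in the text, two ingredients are needed: the analogous identity $\mathrm{RK}_{\mathrm{ps}}\nabla(w\cdot\lambda)\cong\nabla^\mathfrak{p}_{w^{-1}}$ for costandards, and the grading-twist relation $\mathrm{RK}_{\mathrm{ps}}\langle 1\rangle\cong[1]\langle-1\rangle\mathrm{RK}_{\mathrm{ps}}$.

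The first ingredient I would derive from the orthogonality characterization of costandards. Since $\mathrm{RK}_{\mathrm{ps}}$ is an equivalence, applying it to the one-dimensional space $\mathrm{Hom}_{\mathcal{D}^b}(\Delta(w'\cdot\lambda)\langle j\rangle[k],\nabla(w\cdot\lambda))$, which vanishes unless $w=w'$ and $j=k=0$, shows that $\mathrm{RK}_{\mathrm{ps}}\nabla(w\cdot\lambda)$ satisfies exactly the orthogonality characterization of the costandard in $(\mathcal{O}_0^\mathfrak{p})^\mathbb{Z}$ indexed by $w^{-1}$, so it is isomorphic to $\nabla^\mathfrak{p}_{w^{-1}}$. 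The second ingredient follows from the identity $\mathrm{K}\langle 1\rangle\cong[1]\langle-1\rangle\mathrm{K}$ of Subsection~\ref{s2.6} together with the fact that the Ringel component $\mathcal{L}\top_{w_0}$ commutes with grading shifts. Passing to Grothendieck groups, where $v$ acts as $\langle-1\rangle$ and the cohomological shift $[1]$ contributes a sign, the isomorphism $\mathrm{RK}_{\mathrm{ps}}$ intertwines multiplication by $v$ on the source with multiplication by $-v^{-1}$ on the target.

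With both ingredients in hand, I would apply $\mathrm{RK}_{\mathrm{ps}}$ to the defining expansion
\[[\Delta(y\cdot\lambda)]=\sum_{x\in\mathbf{R}^\mathfrak{p}_{\mathrm{short}}}sr_{x,y}(v)\,[\nabla(x\cdot\lambda)]\]
to obtain
\[[\Delta^\mathfrak{p}_{y^{-1}}]=\sum_{x\in\mathbf{R}^\mathfrak{p}_{\mathrm{short}}}sr_{x,y}(-v^{-1})\,[\nabla^\mathfrak{p}_{x^{-1}}].\]
Re-indexing via $x'=x^{-1}$, which bijects $\mathbf{R}^\mathfrak{p}_{\mathrm{short}}$ with $\mathbf{L}^\mathfrak{p}_{\mathrm{short}}$, and comparing with the defining expansion of $pr_{x',y^{-1}}(v)$ against the $\mathbb{Z}[v,v^{-1}]$-basis $\{[\nabla^\mathfrak{p}_{x'}]\}$ yields $pr_{x',y^{-1}}(v)=sr_{(x')^{-1},y}(-v^{-1})$; relabeling $y$ as $y'^{-1}$ gives the claim.

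The main obstacle I anticipate is the first ingredient, namely the verification that $\mathrm{RK}_{\mathrm{ps}}$ sends $\nabla(w\cdot\lambda)$ to $\nabla^\mathfrak{p}_{w^{-1}}$ with no residual grading or homological shift; the orthogonality argument sketched above handles this, but one should be careful that the shift exponents on both sides cancel exactly, which is where the particular form of the grading-twist relation for $\mathrm{RK}_{\mathrm{ps}}$ enters. Once this is settled, the rest is a formal substitution $v\mapsto -v^{-1}$ combined with the bijection $w\leftrightarrow w^{-1}$ between left and right shortest coset representatives.
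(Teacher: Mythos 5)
Your proof is correct and follows essentially the same route as the paper's: apply the singular--parabolic Koszul--Ringel equivalence to the defining expansion $[\Delta(y\cdot\lambda)]=\sum_x sr_{x,y}(v)[\nabla(x\cdot\lambda)]$, track how $\langle j\rangle[k]$ is transported, and compare with the defining expansion for $pr$. The only difference is one of packaging: the paper states the image of $\nabla(w\cdot\lambda)$ and the rule $[i]\langle j\rangle\mapsto[i+j]\langle -j\rangle$ as known properties of the duality (citing \cite{BGS,So1-2,Ma2}) and uses the ``raw'' Koszul--Ringel duality $\Delta(w\cdot\lambda)\mapsto\Delta^\mathfrak{p}_{w_0w^{-1}w_0}$ together with the $w_0$-conjugation automorphism, whereas you use the already-composed equivalence $\mathrm{RK}_{\mathrm{ps}}$ from Subsection~\ref{s7.3} and supply a derivation of the costandard image from hom-orthogonality.

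Two small remarks on the filled-in ingredients. First, your hom-orthogonality argument, as written, pins down $\mathrm{RK}_{\mathrm{ps}}\nabla(w\cdot\lambda)$ at the level of the Grothendieck group (the Euler pairing with the standard basis determines the class uniquely, since standards and costandards are dual bases), which is exactly what the rest of the proof needs; upgrading it to an actual isomorphism of objects would require an extra observation, for example that $\mathrm{RK}_{\mathrm{ps}}$ sends objects with a costandard filtration to objects with a costandard filtration, but this is not needed here. Second, the grading-twist relation $\mathrm{RK}_{\mathrm{ps}}\langle 1\rangle\cong[1]\langle-1\rangle\mathrm{RK}_{\mathrm{ps}}$ is not literally inherited from the regular-block identity of Subsection~\ref{s2.6} because the equivalence of Subsection~\ref{s7.3} involves the singular--parabolic Koszul duality, not the regular one; it does hold (and is stated directly in the paper's proof), but should be cited as a property of that equivalence rather than deduced from the regular case. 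With these two caveats noted, the substitution $v\mapsto -v^{-1}$, the reindexing $x\mapsto x^{-1}$ between $\mathbf{R}^\mathfrak{p}_{\mathrm{short}}$ and $\mathbf{L}^\mathfrak{p}_{\mathrm{short}}$, and the final relabeling all check out.
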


\begin{proof}
Taking into account that the conjugation by $w_0$ is an automorphism of 
the Dynkin diagram, the claim of the proposition follows 
from the definitions using that the 
Koszul-Ringel duality sends $\Delta(w\cdot\lambda)$ to 
$\Delta^\mathfrak{p}_{w_0w^{-1}w_0}$
and $\nabla(w\cdot \lambda)$ to $\nabla_{w_0w^{-1}w_0}^\mathfrak{p}$
and intertwines $[i]\langle j\rangle$ with $[i+j]\langle -j\rangle$. 
\end{proof}

\subsection{Delorme formulae}\label{s7.5}

Similarly to the regular case, we have both ungraded and graded 
versions of Delorme formulae for both, the singular and the parabolic
cases, with the same proofs as for the regular case.

\begin{proposition}\label{prop7.5-1}
{\hspace{1mm}}

\begin{enumerate}[$($a$)$]
\item\label{prop7.5-1.1} 
For $x,y\in \mathbf{R}^\mathfrak{p}_{\mathrm{short}}$, we have
\begin{displaymath}
\sum_{i\geq 0}(-1)^i\dim\mathrm{Ext}^i_{\mathcal{O}}
(\Delta(x\cdot\lambda),\Delta(y\cdot\lambda))=\delta_{x,y}. 
\end{displaymath}
\item\label{prop7.5-1.2} 
For $x,y\in \mathbf{L}^\mathfrak{p}_{\mathrm{short}}$, we have
\begin{displaymath}
\sum_{i\geq 0}(-1)^i\dim\mathrm{Ext}^i_{\mathcal{O}^\mathfrak{p}}
(\Delta_x^\mathfrak{p},\Delta_y^\mathfrak{p})=\delta_{x,y}. 
\end{displaymath}
\end{enumerate}
\end{proposition}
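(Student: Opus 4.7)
The plan is to transcribe the proof of Proposition~\ref{prop3.1-1} into each of the two settings. Both $\mathcal{O}_\lambda$ and $\mathcal{O}_0^{\mathfrak{p}}$ are highest weight categories (indexed by $\mathbf{R}^{\mathfrak{p}}_{\mathrm{short}}$ and $\mathbf{L}^{\mathfrak{p}}_{\mathrm{short}}$, respectively), each carries a simple-preserving duality inherited from $\star$ on $\mathcal{O}$, and their bounded derived categories are generated by costandard objects. These are exactly the three ingredients used in the proof of Proposition~\ref{prop3.1-1}, so the argument will go through verbatim once these are identified.

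For part~(a), I would first establish, in $\mathrm{Gr}(\mathcal{D}^b(\mathcal{O}_\lambda))$, the singular analogue of \eqref{eq3.1-2}:
\begin{equation*}
[M]=\sum_{x\in\mathbf{R}^{\mathfrak{p}}_{\mathrm{short}}}\sum_{i\geq 0}(-1)^i\dim\mathrm{Ext}^i_{\mathcal{O}}\bigl(\Delta(x\cdot\lambda),M\bigr)\bigl[\Delta(x\cdot\lambda)\bigr],
\end{equation*}
for any $M\in\mathcal{D}^b(\mathcal{O}_\lambda)$. I would check it first in the base case $M=\nabla(y\cdot\lambda)$, where the extension sum collapses to $\delta_{x,y}$ by the standard/costandard homological orthogonality in the highest weight category $\mathcal{O}_\lambda$, while $[\nabla(y\cdot\lambda)]=[\Delta(y\cdot\lambda)]$ holds because $\star$ preserves the simples and hence the Grothendieck classes. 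Since both sides of the identity are additive over distinguished triangles and costandard modules generate $\mathcal{D}^b(\mathcal{O}_\lambda)$, the identity extends to arbitrary $M$. Plugging in $M=\Delta(y\cdot\lambda)$ and using that $\{[\Delta(x\cdot\lambda)]:x\in\mathbf{R}^{\mathfrak{p}}_{\mathrm{short}}\}$ is a basis yields the claim in part~(a).

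Part~(b) follows by the identical argument with $\mathcal{O}_\lambda$, $\Delta(x\cdot\lambda)$, $\nabla(x\cdot\lambda)$, $\mathbf{R}^{\mathfrak{p}}_{\mathrm{short}}$ replaced throughout by $\mathcal{O}_0^{\mathfrak{p}}$, $\Delta^{\mathfrak{p}}_x$, $\nabla^{\mathfrak{p}}_x$, $\mathbf{L}^{\mathfrak{p}}_{\mathrm{short}}$. There is no genuine obstacle in either part: the only facts being used are the defining properties of a highest weight category together with the existence of a simple-preserving duality, and these are well known in both the singular and the parabolic settings. The mildly subtle point is verifying standard/costandard homological orthogonality in these settings, but this is immediate from the highest weight structures already noted in Section~\ref{s2.1}.
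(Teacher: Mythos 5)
Your proof is correct and takes exactly the approach the paper intends: in Subsection~\ref{s7.5} the paper simply states that these Delorme formulae hold ``with the same proofs as for the regular case,'' and your write-up is precisely that transcription of the proof of Proposition~\ref{prop3.1-1} to the singular and parabolic highest weight categories, correctly identifying the three needed ingredients (standard/costandard homological orthogonality, the simple-preserving duality giving $[\Delta]=[\nabla]$, and generation of the bounded derived category by costandard objects).
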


\begin{proposition}\label{prop7.5-2}
{\hspace{1mm}}

\begin{enumerate}[$($a$)$]
\item\label{prop7.5-2.1} 
For $x,y\in \mathbf{R}^\mathfrak{p}_{\mathrm{short}}$ and $k\in\mathbb{Z}$, we have
\begin{displaymath}
\sum_{i\geq 0}(-1)^i\dim\mathrm{ext}^i(\Delta(x\cdot\lambda)
\langle k\rangle,\Delta(y\cdot\lambda))=sr_{x,y}^{(k)}. 
\end{displaymath}
\item\label{prop7.5-2.2} 
For $x,y\in \mathbf{L}^\mathfrak{p}_{\mathrm{short}}$ and $k\in\mathbb{Z}$, we have
\begin{displaymath}
\sum_{i\geq 0}(-1)^i\dim\mathrm{ext}^i_{(\mathcal{O}^\mathfrak{p})^\mathbb{Z}}
(\Delta_x^\mathfrak{p}\langle k\rangle,\Delta_y^\mathfrak{p})=pr_{x,y}^{(k)}. 
\end{displaymath}
\end{enumerate}
\end{proposition}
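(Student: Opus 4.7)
The plan is to replicate the proof of Theorem~\ref{thm3.2-1} in both the singular and parabolic settings, exploiting the fact that both $\mathcal{O}_\lambda^{\mathbb{Z}}$ and $(\mathcal{O}_0^\mathfrak{p})^{\mathbb{Z}}$ are graded highest weight categories with standard modules $\Delta(x\cdot\lambda)$, respectively $\Delta_x^\mathfrak{p}$, and costandard modules $\nabla(x\cdot\lambda)$, respectively $\nabla_x^\mathfrak{p}$, as recalled in Subsections~\ref{s7.1}--\ref{s7.2}.

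For Claim~\eqref{prop7.5-2.1}, the first step is to establish, for any $M\in\mathcal{D}^b(\mathcal{O}_\lambda^{\mathbb{Z}})$, the identity
\begin{equation*}
[M] = \sum_{x\in \mathbf{R}^\mathfrak{p}_{\mathrm{short}}}\sum_{i\geq 0}\sum_{k\in\mathbb{Z}}(-1)^i\dim\mathrm{ext}^i(\Delta(x\cdot\lambda)\langle k\rangle, M)\,[\nabla(x\cdot\lambda)\langle k\rangle]
\end{equation*}
in $\mathrm{Gr}(\mathcal{D}^b(\mathcal{O}_\lambda^{\mathbb{Z}}))$. One verifies this first on $M=\nabla(y\cdot\lambda)\langle m\rangle$ using the homological orthogonality of standard and costandard modules, which is a general feature of a graded highest weight category and together with the normalization of graded shifts (tops of standards and socles of costandards in degree $0$) forces the right hand side to collapse to $[\nabla(y\cdot\lambda)\langle m\rangle]$. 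The general case then follows by the additivity of both sides with respect to distinguished triangles, combined with the fact that the graded shifts of costandard modules generate $\mathcal{D}^b(\mathcal{O}_\lambda^{\mathbb{Z}})$. Plugging $M=\Delta(y\cdot\lambda)$ into this identity and comparing with the definition of the singular $R$-polynomials (which prescribes $sr_{x,y}$ as the entries of the transformation matrix from the standard to the costandard basis) delivers the claim, since $\{[\nabla(x\cdot\lambda)\langle k\rangle]\}$ is a $\mathbb{Z}$-basis of $\mathrm{Gr}(\mathcal{D}^b(\mathcal{O}_\lambda^{\mathbb{Z}}))$.

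For Claim~\eqref{prop7.5-2.2}, the same argument applies verbatim after replacing $\mathbf{R}^\mathfrak{p}_{\mathrm{short}}$ by $\mathbf{L}^\mathfrak{p}_{\mathrm{short}}$, the pair $(\Delta(x\cdot\lambda),\nabla(x\cdot\lambda))$ by $(\Delta_x^\mathfrak{p},\nabla_x^\mathfrak{p})$, and $sr_{x,y}$ by $pr_{x,y}$. Alternatively, Claim~\eqref{prop7.5-2.2} can be deduced directly from Claim~\eqref{prop7.5-2.1} by applying the Koszul-Ringel equivalence of Subsection~\ref{s7.3} together with Proposition~\ref{prop7.4-7}, since that equivalence matches graded Euler characteristics of $\mathrm{ext}$-groups between standards on the two sides. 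I do not anticipate any serious obstacle: the two key inputs (homological orthogonality between standards and costandards, and generation of the derived category by costandards) are standard features of a graded highest weight category, and the only point that requires minor care is the bookkeeping of the grading shifts, which is dictated by the chosen normalization.
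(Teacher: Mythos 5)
Your proposal matches the paper's approach: the paper explicitly states that the singular and parabolic Delorme formulae follow ``with the same proofs as for the regular case,'' i.e., by replicating the argument of Theorem~\ref{thm3.2-1} using the graded highest weight structure of $\mathcal{O}_\lambda^{\mathbb{Z}}$ and $(\mathcal{O}_0^\mathfrak{p})^{\mathbb{Z}}$. Your alternative derivation of part~\eqref{prop7.5-2.2} from part~\eqref{prop7.5-2.1} via the Koszul-Ringel equivalence and Proposition~\ref{prop7.4-7} is also sound and consistent with the surrounding discussion, though the paper does not spell it out.
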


\subsection{Expected and additional extensions}\label{s.singexpected}

Under the indexing conventions in Subsections~\ref{s7.1},~\ref{s7.2}, the \emph{expected} extensions between singular or parabolic Verma modules are the extensions in 
\[\mathrm{ext}^k(\Delta(x\cdot \lambda),\Delta(y\cdot\lambda)\langle j\rangle),\quad 
\mathrm{ext}^{k+j}(\Delta^\mathfrak{p}_{x},
\Delta^\mathfrak{p}_{y}\langle -j\rangle)\]
where $2k+j = \ell(x)-\ell(y)$. The other (nonzero) extensions are \emph{additional}.
If all extensions are expected, then Proposition~\ref{prop7.5-2} says that the dimensions of the expected ext spaces are given by the $R$-polynomials.

\subsection{Koszulity}\label{s7.8}

Consider the categoy $\mathcal{D}^b(\mathcal{O}_\lambda^\mathbb{Z})$, for a dominant 
and integral (but not necessarily regular) $\lambda$. Denote by $\mathscr{D}$ 
the full subcategory of $\mathcal{D}(\mathcal{O}_\lambda^\mathbb{Z})$ 
given by the objects $\Delta(w\cdot\lambda)\langle i\rangle[j]$, where
$w\in \mathbf{R}^\mathfrak{p}_{\mathrm{short}}$ and $i,j\in\mathbb{Z}$ such that 
\begin{displaymath}
i\in\{-\ell(w)-2j,-\ell(w)-2j-1\}. 
\end{displaymath}
Then the same argument as Theorem~\ref{thmKoszul} gives the following generalization.

\begin{theorem}\label{thm7.8}
Let $\lambda$ be dominant and integral. Assume that all extensions between the Verma 
modules in $\mathcal{O}_\lambda$ are expected. Then the following assertions hold:
\begin{enumerate}[$($a$)$]
\item We have an equivalence
$\mathcal{D}^*(\mathcal{O}_\lambda^\mathbb{Z})\cong \mathcal{D}^*(\mathscr{D}\text{-}\mathrm{mod})$ where $*\in\{b,\uparrow,\downarrow\}$.
\item The path algebra of $\mathscr{D}$ is Koszul and is Koszul self-dual.
\end{enumerate}
\end{theorem}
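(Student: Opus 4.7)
The plan is to mirror the proof of Theorem~\ref{thmKoszul} in the singular setting, with only the bookkeeping changes forced by the new indexing set $\mathbf{R}^\mathfrak{p}_{\mathrm{short}}$. First I would translate the ``expected extensions'' hypothesis: by the singular notion of expected/additional extensions in Subsection~\ref{s.singexpected} (and Proposition~\ref{prop7.5-2}\eqref{prop7.5-2.1}), the hypothesis is exactly the statement that, in $\mathcal{D}^b(\mathcal{O}_\lambda^\mathbb{Z})$, the subcategory $\mathscr{D}$ has no self-extensions of nonzero degree. Combined with the standard fact that the Verma modules $\{\Delta(w\cdot\lambda)\}_{w\in \mathbf{R}^\mathfrak{p}_{\mathrm{short}}}$ generate $\mathcal{D}^b(\mathcal{O}_\lambda^\mathbb{Z})$ (a consequence of $\mathcal{O}_\lambda$ being a highest weight category with these as standard objects), this exhibits $\mathscr{D}$ as a tilting subset in the sense of Rickard, cf.\ \cite[Subsection~2.1]{DM}.

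For Claim~(a), I would then invoke the Rickard--Morita theorem \cite[Theorem~2.1]{DM} to obtain the equivalence
\[
\mathcal{D}^*(\mathcal{O}_\lambda^\mathbb{Z})\;\cong\;\mathcal{D}^*(\mathscr{D}\text{-}\mathrm{mod}),\qquad *\in\{b,\uparrow,\downarrow\},
\]
realized by $X\mapsto \Hom_{\mathcal{D}^*(\mathcal{O}_\lambda^\mathbb{Z})}(-,X)$ restricted to $\mathscr{D}$. This is the verbatim analogue of the argument for Claim~\eqref{thm7.8.1} in Theorem~\ref{thmKoszul}.

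For Claim~(b), I would follow the same recipe as in the regular proof. Since the standard and costandard modules of the highest weight category $\mathcal{O}_\lambda^\mathbb{Z}$ are homologically orthogonal, and since the simple-preserving duality $\star$ on $\mathcal{O}_\lambda^\mathbb{Z}$ swaps $\Delta(w\cdot\lambda)\langle i\rangle$ and $\nabla(w\cdot\lambda)\langle i\rangle$, the simple objects of $\mathscr{D}\text{-}\mathrm{mod}$ correspond under the equivalence of (a) to the dual Verma modules $\nabla(w\cdot\lambda)\langle i\rangle[j]$ with the analogous shifts. Thus the quadratic dual of (the path algebra of) $\mathscr{D}$ is realized by these dual Vermas, and the composition
\[
\mathcal{D}^\uparrow(\mathscr{D}\text{-}\mathrm{mod}) \xrightarrow{\cong} \mathcal{D}^\uparrow(\mathcal{O}_\lambda^\mathbb{Z}) \xrightarrow[\cong]{-^\star} \mathcal{D}^\downarrow(\mathcal{O}_\lambda^\mathbb{Z}) \xrightarrow{\cong} \mathcal{D}^\downarrow(\mathscr{D}\text{-}\mathrm{mod})
\]
agrees with the Koszul duality functor. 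Both Koszulity and Koszul self-duality of the path algebra of $\mathscr{D}$ then follow from \cite[Theorem~30]{MOS}.

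The only genuine point requiring care — and the main obstacle — is verifying that the structural ingredients used in Theorem~\ref{thmKoszul} transfer to the singular block $\mathcal{O}_\lambda^\mathbb{Z}$: namely, the homological orthogonality of standard and costandard modules, the fact that Verma modules generate the bounded derived category, the existence of the graded simple-preserving duality, and the tilting-subset formalism of \cite{DM}. All of these are standard and can be read off from Soergel's theory \cite{So} and \cite{Ma2}, so the transfer is routine rather than conceptual; the substance of the theorem is the ``expected extensions'' hypothesis, which feeds directly into the Rickard--Morita machinery.
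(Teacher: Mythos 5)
Your proposal follows exactly the route the paper intends: the paper's proof of Theorem~\ref{thm7.8} is literally the one-line remark ``the same argument as Theorem~\ref{thmKoszul}'', and your write-up simply spells out that transfer — translating the ``expected'' hypothesis into vanishing of self-extensions of $\mathscr{D}$, invoking the Rickard--Morita theorem of \cite[Theorem~2.1]{DM} for part (a), and identifying the quadratic dual via homological orthogonality, the duality $\star$, and \cite[Theorem~30]{MOS} for part (b). This is correct and matches the paper's approach.
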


\vspace{2mm}

\noindent
H.~K.: Department of Mathematics, Uppsala University, Box. 480,
SE-75106, Uppsala,\\ SWEDEN, email: {\tt hankyung.ko\symbol{64}math.uu.se}

\noindent
V.~M.: Department of Mathematics, Uppsala University, Box. 480,
SE-75106, Uppsala,\\ SWEDEN, email: {\tt mazor\symbol{64}math.uu.se}

\end{document}